\newtheorem{Thm}[equation]{Theorem}
\newtheorem{Lem}[equation]{Lemma}
\newtheorem{Cor}[equation]{Corollary}
\newtheorem{Con}[equation]{Conjecture}
\theoremstyle{remark}
\theoremstyle{definition}
\numberwithin{equation}{section}
\begin{document}

\title{Uniqueness of reduced alternating rational 3-tangle diagrams}
\author{ Bo-hyun Kwon}
\date{April 2015}

\begin{abstract}
Tangles were introduced by J. Conway.  In 1970, he proved that every rational 2-tangle defines a rational number and two rational 2-tangles are isotopic if and only if they have the same rational number.
So, from Conway's result we have a perfect classification for rational $2$-tangles.
However, there is no similar theorem to classify rational $3$-tangles.

 In this paper, we introduce an invariant of rational $n$-tangles which is obtained from the Kauffman bracket. It forms a vector with Laurent polynomial entries. We prove that the invariant classifies  the rational $2$-tangles and the reduced alternating rational $3$-tangles.  We conjecture that it classifies the rational $3$-tangles as well. 
\end{abstract}
\maketitle
\section{Introduction}
A $n$-$tangle$ is the disjoint union of $n$ properly embedded arcs in the unit 3-ball. 
  A $rational$ $n$-$tangle$ is a $n$-tangle $\alpha_1\cup\alpha_2\cup\cdot\cdot\cdot \cup\alpha_n$ in a 3-ball $B^3$
 such that there exists a homeomorphism of pairs $\Phi: (B^3,\alpha_1\cup\alpha_2\cup\cdot\cdot\cdot \cup\alpha_3)\longrightarrow
(D^2\times I,\{p_1,p_2,..., p_n\}\times I)$). Then two rational $n$-tangles, $T,T'$, in $B^3$ are $\emph{isotopic}$, denoted by $T\approx T'$, if there is an orientation-preserving self-homeomorphism
$h: (B^3, T)\rightarrow (B^3,T')$ that is the identity map on the boundary.\\

 H. Cabrera-Ibarra~\cite{3} found a pair of invariants which is defined for all rational 3-tangles. Each invariant is a $3\times 3$ matrix with complex number entries.
  The pair of invariants classifies the elements of six special sets of rational 3-tangles each of which contains the braid 3-tangles. However, it does not cover the collection of all alternating rational 3-tangles.\\
  
  Also, Emert and Ernst~\cite{5} classified  the collection of  $\textit{essential}$ alternating rational $n$-tangles which is a set of alternating rational $n$-tangles satisfying a certain condition. However, it also does not cover the collection of all alternating rational 3-tangles.\\
  
  In this paper, I would like to classify the collection of all alternating rational $3$-tangles. With a similar argument, it could be possible to classify the collection of all alternating rational $n$-tangles.\\

Let $S^2$ be a sphere smoothly embedded in $S^3$ and let $K$ be a link transverse to $S^2$. The complement in $S^3$ of $S^2$ consists of two open balls, $B_1$ and $B_2$. We assume that $S^2$ is $xz$-plane $\cup~\{\infty\}$.  Now, consider the projection of $K$ onto the flat $xy$-plane.
Then, the projection onto the $xy$-plane of $S^2$ is the $x$-axis and $B_1$ projects to the upper half plane and $B_2$ projects to the lower half plane.
The projection gives us a {\emph{link diagram}}, where we make note of over and undercrossings.
The diagram of the link $K$  is called a $\emph{plat on 2n-strings}$, denoted by $p_{2n}(w)$,  if it is the union of a $2n$-braid $w$ and $2n$ unlinked and unknotted arcs which connect pairs of consecutive strings of the braid at the top and at the bottom endpoints and $S^2$ meets the top of the $2n$-braid. (See  the first and second diagrams of Figure~\ref{p1}.) Any link $K$ in $S^3$ admits a plat presentation. i.e., $K$ is ambient isotopic to a plat (\cite{2}, Theorem 5.1). The bridge (plat) number $b(K)$ of $K$ is the smallest possible number $n$ such that there exists a plat presentation  of $K$ on $2n$ strings. We say that $K$ is $n$-bridge link if the bridge number of $K$ is $n$. We remark that the braid group $\mathbb{B}_{2n}$ is generated by $\sigma_1,\sigma_2,\cdot\cdot\cdot\sigma_{2n-1}$ which are twisting of two adjacent strings. For example, $w=\sigma_2^{-1}\sigma_4^{-1}\sigma_3\sigma_1^3\sigma_5^2\sigma_4^{-1}\sigma_2^{-1}$ is the word for the
6 braid of the first diagram of Figure~\ref{p1}.
\\

\begin{figure}[htb]
\includegraphics[scale=.40]{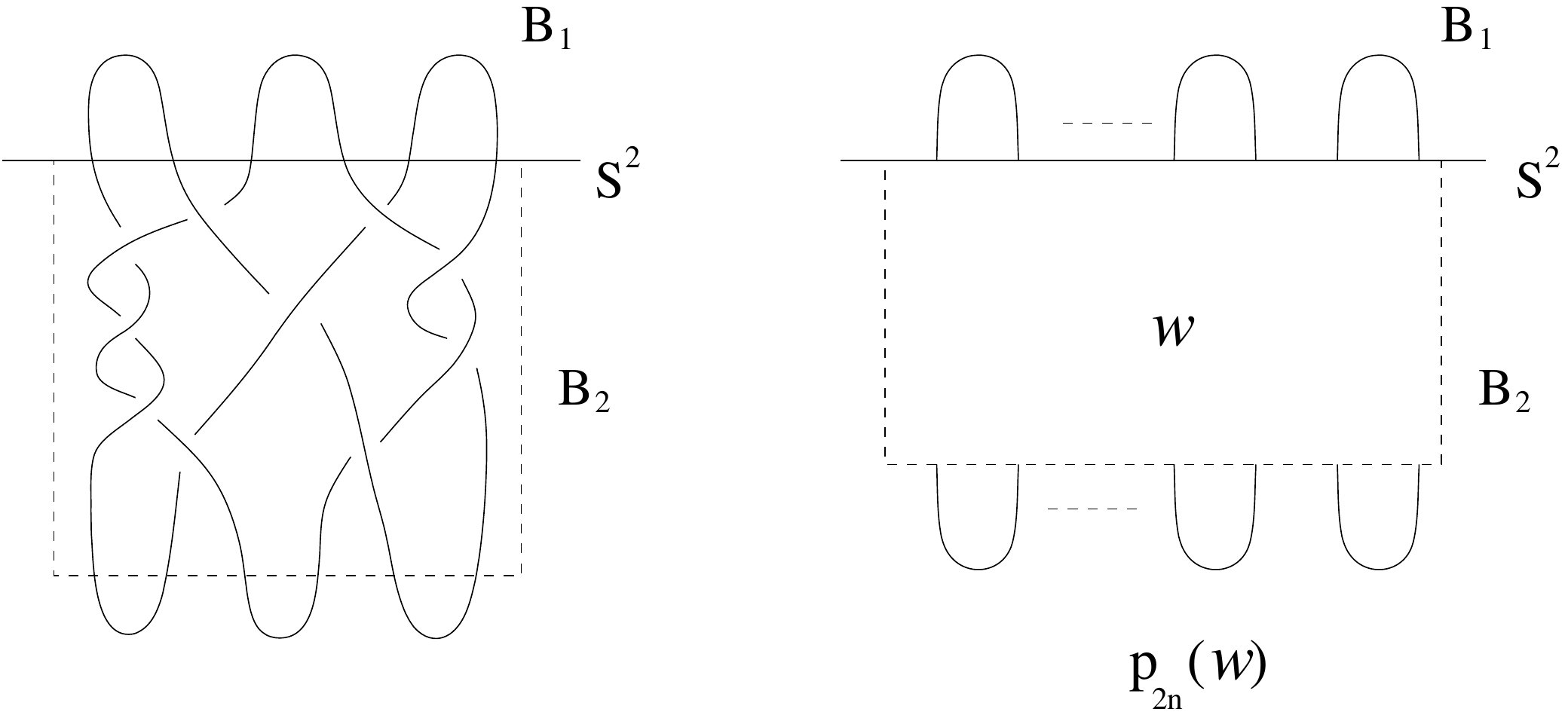}
\caption{}
\label{p1}
\end{figure}

 Then we say that a plat presentation  is $\emph{standard}$ if the $2n$-braid $w$ of $p_{2n}(w)$  involves only $\sigma_2,\sigma_3,\cdot\cdot\cdot,\sigma_{2n-1}$.\\
 
 We remark that $K\cap B_2$ is a rational tangle.\\
 
Now, we define a $\emph{2n-plat presentaion}$  for rational $n$-tangles $K\cap B_2$   in $B_2$, denoted by $q_{2n}(w)$, if it is  the union of a $2n$-braid $w$ and $n$ unlinked and unknotted arcs which connect pairs of strings of the braid  at the bottom endpoints with the same pattern as in a plat presentation  for a link  and the projection of $\partial B_2$ onto the flat $xy$-plane meets the top of the $2n$-braid.\\

We note that $q_{2n}(w)$ is  a rational $n$-tangle in $B_2$ since we can obtain a trivial rational $n$-tangle from the rational $n$-tangle by a sequence of half Dehn twists which are automorphisms of $B^3$ that preserve the six punctures.\\

We also say that $\overline{q_{2n}(w)}$ (= $p_{2n}(w)$) is the $\emph{plat closure}$ of $q_{2n}(w)$ if it is the union of $q_{2n}(w)$ and $n$ unlinked and unknotted arcs in $B_1$ which connect pairs of consecutive strings of the braid at the top endpoints. \\

The tangle diagrams with the circles in Figure~\ref{p2} give the  diagrams of trivial rational $2,3$-tangles as in~\cite{1},~\cite{3},~\cite{6},~\cite{8}. The right sides of each pair of diagrams show the trivial rational $2,3$-tangles in $B_2$.\\

 A tangle diagram $TD$ (or $2n$-plat presentation) is  $\emph{reduced}$ alternating if $TD$ is alternating and $TD$ does not have a self-crossing which can be removed by a Type I Reidemeister move.\\

We note that $q_4(w)$ is alternating if and only if $\overline{q_4(w)}$ is alternating, possibly not reduced alternating.\\

\begin{figure}[htb]
\includegraphics[scale=.32]{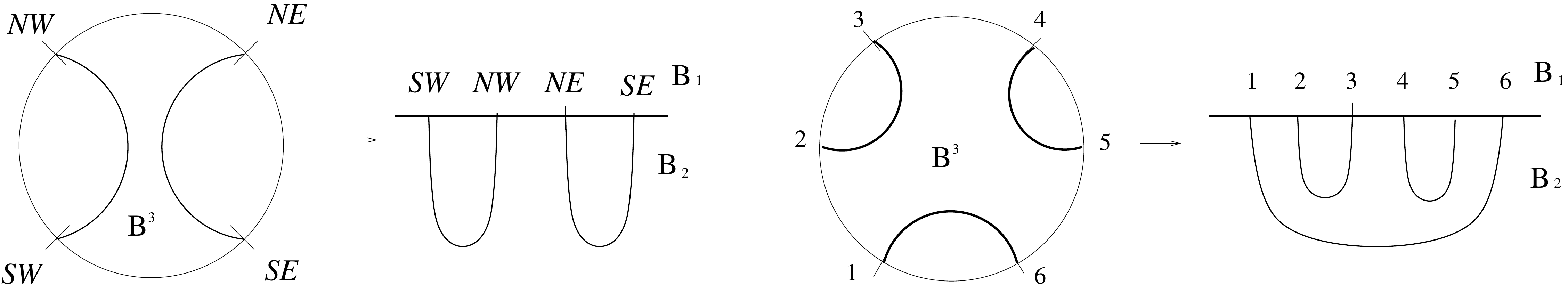}
\caption{}
\label{p2}
\end{figure}

In section~\ref{s2}, we introduce the Kauffman bracket of a rational tangle diagram and discuss how to calculate it. Also, we define a vector from the Kauffman bracket of a rational $2$-tangle diagram.\\

Then, we will prove that the vector from the Kauffman bracket of a rational $2$-tangle diagram is an invariant which can classify the rational 2-tangles in section~\ref{s3}.\\

Finally, we will show that the vector from the Kauffman bracket of a rational $3$-tangle diagram is an invariant of rational $3$-tangles and especially it classifies the reduced alternating rational 3-tangles in section~\ref{s4}.

\section{The Kauffman bracket and its calculation}\label{s2}

Let $\Lambda=\mathbb{Z}[a,a^{-1}]$ and $L$ be a link diagram. I want to emphasize here that $K,L$ and $T$ stand for a diagram and $\mathbb{K}$, $\mathbb{L}$ and $\mathbb{T}$ stand for a knot or link for convenience.  \\

We recall that the Kauffman bracket $<L>\in \Lambda$ of  $L$ is obtained from the three axioms
\begin{figure}[htb]
\begin{center}
\includegraphics[scale=.4]{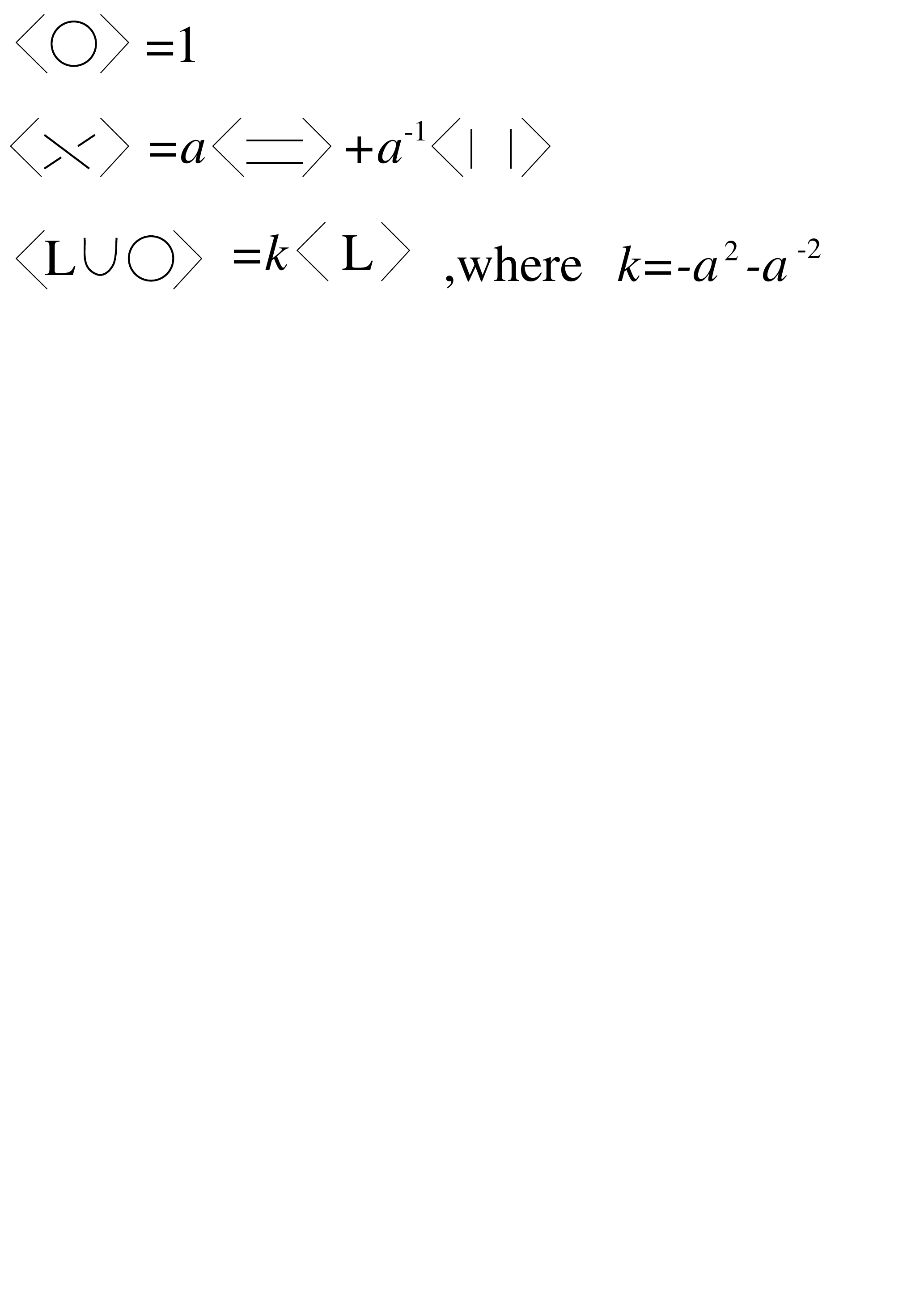}
\end{center}
\vskip -270pt
\end{figure}

The symbol $< ~>$ indicates that the changes are made to the diagram locally, while the rest of the diagram is fixed.\\

 The Kauffman polynomial $X_L(a)\in \Lambda$ is defined by 
\begin{center}
$X_L(a)=(-a^{-3})^{w(\overrightarrow{L})}<L>$,
\end{center}
where the writhe $w(\overrightarrow{L})\in\mathbb{Z}$ is obtained by assigning an orientation to $L$, and taking a sum over all crossings of $L$ of their indices $e$, which are given by the following rule
 
 \begin{figure}[htb]
 \begin{center}
 \includegraphics[scale=.5]{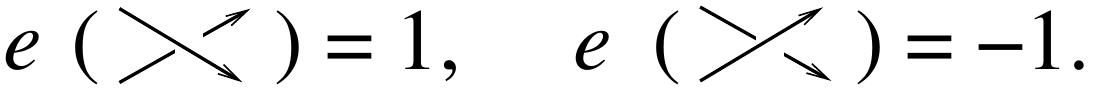}
 \end{center}
 \end{figure}

Then we have the following theorem.

\begin{Thm}(\cite{8})\label{T1}
If $\mathbb{K}$ is a 2-bridge  link, then there exists a word $w$ in $\mathbb{B}_4$ so that the  plat presentation $p_4(w)$ is reduced alternating and standard and represents a link isotopic to $\mathbb{K}$.
\end{Thm}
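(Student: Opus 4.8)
The plan is to route through Schubert's classification of $2$-bridge links together with the continued fraction calculus for rational tangles. First I would invoke Schubert's theorem: every $2$-bridge link $\mathbb{K}$ is equivalent to $\mathfrak{b}(p,q)$ for a pair of coprime integers with $p>q>0$ (together with the degenerate cases such as the $2$-component unlink). The rational number $p/q$ admits a continued fraction expansion $p/q=[a_1,a_2,\dots,a_m]$ in which every $a_i$ is a positive integer; such an expansion exists by the subtractive form of the Euclidean algorithm, and it can be further normalised (forcing a prescribed parity of $m$, and $a_m\ge 2$) using the standard identities $[\dots,a,1]=[\dots,a+1]$ and $[\dots,a,0,b,\dots]=[\dots,a+b,\dots]$.

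Next I would translate this arithmetic data into a $4$-braid. Reading $[a_1,\dots,a_m]$ as a recipe for building a rational $2$-tangle by alternately adjoining vertical and horizontal twist regions, one obtains a word $w$ formed by alternately stacking blocks of powers of $\sigma_2$ with blocks of powers of $\sigma_3$, say $w=\sigma_2^{a_1}\sigma_3^{-a_2}\sigma_2^{a_3}\sigma_3^{-a_4}\cdots$ with the signs chosen so that successive twist regions are compatible; a direct check (or a citation to the rational tangle calculus) shows that the plat closure $p_4(w)$ is a diagram of $\mathfrak{b}(p,q)$, hence of $\mathbb{K}$. Since $w$ involves only $\sigma_2$ and $\sigma_3$, this presentation is standard by construction, and by the remark preceding the theorem, $p_4(w)$ is alternating if and only if $q_4(w)$ is.

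Then I would verify that $p_4(w)$ is alternating and can be made reduced. Alternation is the classical fact that a $2$-bridge (rational tangle) diagram coming from a continued fraction all of whose entries have the same sign is alternating: following a strand through a twist region and across the transition from a $\sigma_2$-block to a $\sigma_3$-block, one checks the over/under pattern alternates, and positivity of the $a_i$ is exactly what forbids two consecutive crossings of the same type. For reducedness, the only crossings of $p_4(w)$ that a Type I Reidemeister move could remove are single crossings of an extremal twist region lying against one of the plat arcs, i.e.\ essentially the cases where the first or last entry of the continued fraction equals $1$; any such crossing can be absorbed into the adjacent unknotted arc, strictly lowering the crossing number and leaving a shorter word of the same shape. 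Iterating this until no reduction remains yields a word $w$ with $p_4(w)$ reduced alternating and standard, once the finitely many minimal-crossing base cases (the unlink, the Hopf link, the trefoil, and so on) are checked directly.

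The step I expect to be the main obstacle is the bookkeeping that shows the three conditions hold \emph{simultaneously}: obtaining an alternating diagram alone is routine, and obtaining a standard one alone is routine, but one must ensure that eliminating $\sigma_1$ (which is what the positive continued fraction expansion secretly accomplishes) does not reintroduce nugatory or non-alternating crossings, and that the end-reductions needed for ``reduced'' do not break ``standard'' or ``alternating.'' Organising everything around the positive continued fraction expansion, rather than manipulating an arbitrary plat word, is what keeps this in hand, since that expansion supplies standardness and alternation for free and confines the reducedness issue to the two ends of the word.
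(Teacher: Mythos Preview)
The paper does not give its own proof of this theorem: it is stated with a citation to \cite{8} (Kauffman--Lambropoulou) and used as a black box, so there is nothing in the paper to compare your argument against.

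That said, your proposal is essentially the standard argument behind the cited result. The route through Schubert's normal form $\mathfrak{b}(p,q)$, a same-sign continued fraction expansion of $p/q$, and its realisation as a $4$-plat built from alternating blocks of $\sigma_2$ and $\sigma_3$ is exactly how Kauffman--Lambropoulou (and before them Conway, Schubert, Bankwitz--Schumann) obtain the alternating standard form for rational tangles and $2$-bridge links. Your identification of the only delicate point---simultaneously maintaining ``standard,'' ``alternating,'' and ``reduced''---is accurate, and your plan to handle it by working entirely with the positive continued fraction expansion is the right one: in that normal form the diagram is automatically standard and alternating, and the only possible nugatory crossings are the extremal ones corresponding to a terminal entry equal to $1$, which the identities you quote eliminate without leaving the class of same-sign expansions. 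One small point to tidy: in the paper's conventions the rational $2$-tangle word actually ends in a power of $\sigma_1$ after the relabelling $\sigma_2,\sigma_3\mapsto\sigma_1,\sigma_2$ (see the discussion following Theorem~\ref{T1}), so when you write down $w$ you should check your parity/normalisation of $m$ matches that convention rather than the opposite one.
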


Since $p_4(w)$ is standard, we consider $\mathbb{B}_3$ instead of $\mathbb{B}_4$.  Then, let $\sigma_1$ and $\sigma_2$ be the two generators of $\mathbb{B}_3$.
I want to emphasize here that we are changing  from $\sigma_2$ and $\sigma_3$ to $\sigma_1$ and $\sigma_2$.
\\

 \begin{figure}[htb]
 \includegraphics[scale=.6]{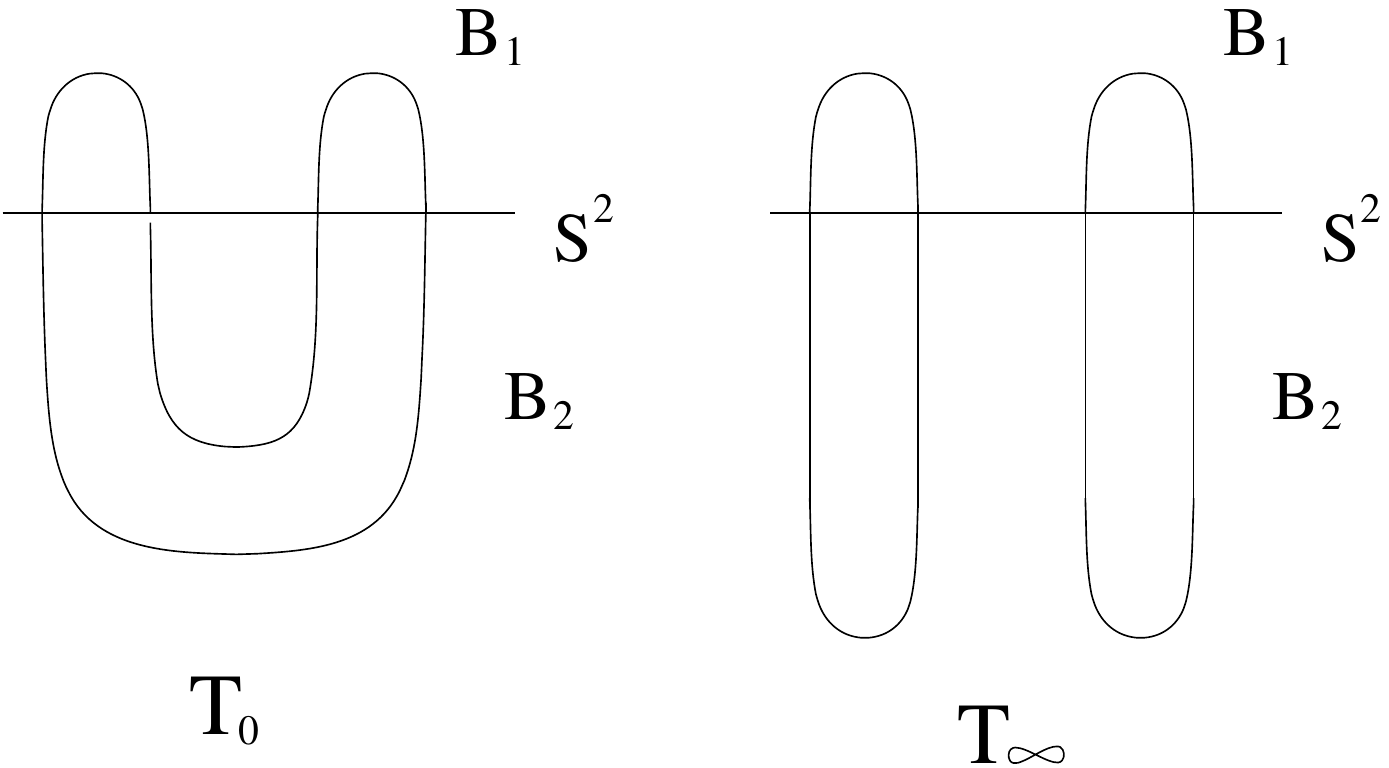}
 \caption{}
 \label{p3}
 \end{figure}

 Goldman and  Kauffman [5] define the $\emph{bracket polynomial}$ of the two tangle diagram $T$  as 
 $<T>=f(a)<T_0>+g(a)<T_{\infty}>$, where the coefficients $f(a)$ and $g(a)$ are Laurent polynomials in $a$ and $a^{-1}$ that are obtained by starting with $T$ and using the three axioms repeatedly until only the two trivial tangle diagrams $T_0$ and $T_{\infty}$ in the expression given for $T$ are left.  We note that $f(a)$ and $g(a)$ are invariant under regular isotopy of $T$, where regular isotopy is the equivalence relation of link diagrams that is generated by using the 2nd and 3rd Reidemeister moves only.
  So, we define the coefficients vector $(f(a),g(a))$ which is a regular invariant of the rational $3$-tangles.\\
 
 Let $\mathcal{A}=<T_0>$ and $\mathcal{B}=<T_{\infty}>$. So, $<T>=f(a)\mathcal{A}+g(a)\mathcal{B}$.\\

We assume that $\mathbb{T}$ is a reduced alternating rational $2$-tangle. Then we have $w=\sigma_1^{\epsilon_1}\sigma_2^{-\epsilon_2}\cdot\cdot\cdot\sigma_1^{\epsilon_{2k-1}}$
for some positive (negative) integers $\epsilon_i$ for $2\leq i\leq 2k-1$ and non-negative (non-positive) integer $\epsilon_1$ for $\mathbb{T}$. 
We note that $w$ needs to end at $\sigma_1^{\pm 1}$. If  $w=\sigma_1^{\epsilon_1}\sigma_2^{-\epsilon_2}\cdot\cdot\cdot\sigma_1^{\epsilon_{2k-1}}\sigma_2^{\epsilon_{2k}}$  for some positive (negative) integer $\epsilon_{2k}$ then $q_4(w)$ is not a reduced alternating tangle diagram. i.e., the tangle diagram has a self crossing which can be removed by the first Reidemeister move.
\\

Suppose that $T_1=q_4(w)$, where $w=\sigma_1^{\epsilon_1}\sigma_2^{-\epsilon_2}\cdot\cdot\cdot\sigma_1^{\epsilon_{2n-1}}$ for some positive (negative) integers $\epsilon_i$ ($2\leq i\leq 2n-1$) and non-negative (non-positive) integer $\epsilon_1$.\\

  Let $A_1^{\pm 1}= \left( \begin{array}{cl}
-a^{\mp 3} & a^{\mp 1} \\
0 & a^{\pm 1} \\
 \end{array} \right)$ \hskip 10pt and \hskip 10pt $A_2^{\pm 1}=\left( \begin{array}{lc}
a^{\pm 1} & 0 \\
a^{\mp 1} & -a^{\mp 3} \\
 \end{array} \right)$.
 
Also, let $A=A_1^{\epsilon_1}A_2^{-\epsilon_2}\cdot\cdot\cdot
 A_1^{\epsilon_{2n-1}}$.\\

Then we can calculate the two coefficients $f(a)$ and $g(a)$ of $\mathcal{A}$ and $\mathcal{B}$ of $T_1$ as follows.\\

\begin{Thm}(Eliahou-Kauffman-Thistlethwaite~\cite{4})\label{T2} Suppose that $T_1$ (=$q_4(u)$) is a plat presentation of a rational $2$-tangle $\mathbb{T}_1$  which is alternating and standard so that
$u=\sigma_1^{\epsilon_1}\sigma_2^{-\epsilon_2}\cdot\cdot\cdot\sigma_1^{\epsilon_{2n-1}}$ for some positive (negative) integers $\epsilon_i$ ($1\leq i\leq 2n-1$) and non-negative (non-positive) integer $\epsilon_1$. Then,
\\

 $<T_1>=f(a)\mathcal{A}+g(a)\mathcal{B}$, where $f(a)$ and $g(a)$ are given by \\

 $(f(a),g(a))^{t} = A \left( \begin{array}{c}
0 \\
 1 \\
 \end{array} \right)$, i.e., the second column of $A$, where $A=A_1^{\epsilon_1}A_2^{-\epsilon_2}\cdot\cdot\cdot A_1^{\epsilon_{2n-1}}$.
 
 \end{Thm}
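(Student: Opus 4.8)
The plan is to prove this by induction on the syllable length of $u$, the essential point being that appending a single elementary twist $\sigma_1^{\pm 1}$ or $\sigma_2^{\pm 1}$ to a rational $2$-tangle diagram acts on its coefficient vector $(f(a),g(a))^t$ by left multiplication by the corresponding matrix $A_1^{\pm 1}$ or $A_2^{\pm 1}$. I would build the tangle $q_4(u)$ starting from the plat arcs and adding the syllables of $u$ one twist region at a time, beginning with the syllable nearest the arcs and ending with $\sigma_1^{\epsilon_1}$; since "adjoin a twist region" corresponds to multiplying the coefficient vector on the \emph{left} by the matrix of that twist, the net effect is left multiplication by $A_1^{\epsilon_1}A_2^{-\epsilon_2}\cdots A_1^{\epsilon_{2n-1}}=A$ applied to the coefficient vector of the base tangle.

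For the base case I would note that the plat arcs, carrying no crossings, form one of the two trivial tangles, and that with the conventions fixed in the figures this is the tangle realizing $\mathcal{B}=\langle T_\infty\rangle$; hence its coefficient vector is $(0,1)^t$, which matches $A\cdot(0,1)^t$ when $A=I$. The inductive step is a one-crossing computation. Writing $e_i$ for the "turnback" (Temperley--Lieb) tangle obtained by the non-identity smoothing of a $\sigma_i$ crossing, the Kauffman skein relation gives, as operators on $2$-tangle diagrams, $\sigma_i^{\pm 1}=a^{\pm 1}\,\mathrm{id}+a^{\mp 1}e_i$. One then reads off from the pictures of $T_0$, $T_\infty$, and $e_i$ that $e_i$ carries each of $\langle T_0\rangle,\langle T_\infty\rangle$ into the line spanned by just one of them, picking up the loop factor $-a^2-a^{-2}$ on the "matching" one, so that the matrix of $e_i$ in the basis $(\langle T_0\rangle,\langle T_\infty\rangle)$ is $\begin{pmatrix}-a^2-a^{-2}&1\\0&0\end{pmatrix}$ for $i=1$ and $\begin{pmatrix}0&0\\1&-a^2-a^{-2}\end{pmatrix}$ for $i=2$. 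Substituting these into $a^{\pm 1}\,\mathrm{id}+a^{\mp 1}e_i$ and simplifying (using $a+a^{-1}(-a^2-a^{-2})=-a^{-3}$) reproduces $A_1^{\pm 1}$ and $A_2^{\pm 1}$ on the nose, which closes the induction and gives $(f(a),g(a))^t=A\,(0,1)^t$, the second column of $A$.

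The hard part is purely the bookkeeping in this single-crossing step: fixing the Kauffman sign convention, deciding which smoothing of $\sigma_i$ is the identity smoothing and which is the turnback $e_i$, tracking the handedness of $\sigma_1$ versus $\sigma_2$ against the chosen pictures of $T_0$ and $T_\infty$, and confirming that the Reidemeister-I loop value $-a^2-a^{-2}$ (equivalently the monomials $-a^{\mp 3}$ in the matrices) lands in the claimed entry rather than its transpose. Two degenerate situations — $\epsilon_1=0$, so that $u$ effectively begins with a $\sigma_2$-syllable, and the standing requirement that $u$ end in $\sigma_1^{\pm 1}$ — need only a one-line separate check, since they concern nothing more than which trivial tangle the induction starts from.
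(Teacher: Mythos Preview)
The paper does not supply its own proof of this theorem: it is stated with attribution to Eliahou--Kauffman--Thistlethwaite~\cite{4} and no proof environment follows, the text moving directly to Lemma~\ref{T3}. So there is nothing in the paper to compare against.

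That said, your argument is correct and is in fact the standard one (and essentially that of~\cite{4}): the Kauffman skein module of the $4$-punctured disk is free of rank $2$ on $\langle T_0\rangle,\langle T_\infty\rangle$, stacking a braid generator acts linearly on this module, and the single-crossing skein relation $\sigma_i^{\pm1}=a^{\pm1}\mathrm{id}+a^{\mp1}e_i$ together with the evident action of the Temperley--Lieb idempotents $e_1,e_2$ on $T_0,T_\infty$ gives exactly the matrices $A_1^{\pm1},A_2^{\pm1}$. Your arithmetic checks (e.g.\ $a+a^{-1}(-a^2-a^{-2})=-a^{-3}$), and the base case is right once one unwinds the paper's relabelling $\sigma_2,\sigma_3\mapsto\sigma_1,\sigma_2$ for standard plats. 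The only caution is purely conventional: make sure your identification of which smoothing of $\sigma_i$ is the identity and which is $e_i$, and which bottom plat pattern is $T_\infty$, agrees with the paper's Figure~\ref{p3} and its definition of $q_4(w)$; you flag this yourself, and it is indeed the only place an error could creep in.
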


Then we  show the following lemma which helps us to show Theorem~\ref{T5}.\\

\begin{Lem}\label{T3}
Let $A=A_2^{m}$ for some  non-zero integer $m$.\\

 Then  $A= \left( \begin{array}{cc}
a^m & 0 \\
\displaystyle{{a^{m+2}+(-1)^{m+1}a^{2-3m}\over 1+a^4}} & (-1)^ma^{- 3m} \\
 \end{array} \right).$

\end{Lem}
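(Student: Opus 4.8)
The plan is to prove this by induction on the exponent $m$, using the defining relation $A_2^m = A_2^{m-1} A_2$. First I would check the base case $m = 1$, where the claimed matrix should reduce to $A_2$ itself: indeed $a^1 = a$, $(-1)^1 a^{-3} = -a^{-3}$, and the lower-left entry $\frac{a^3 + (-1)^2 a^{-1}}{1+a^4} = \frac{a^3 + a^{-1}}{1+a^4} = \frac{a^{-1}(a^4+1)}{1+a^4} = a^{-1}$, matching $A_2$. (One should also note the case $m < 0$, which can be handled by the analogous downward induction using $A_2^{-1}$, or by verifying the formula is stable under $m \mapsto m-1$ in both directions.)

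For the inductive step, I would multiply the claimed matrix for $A_2^{m-1}$ on the right by $A_2 = \left(\begin{array}{cc} a & 0 \\ a^{-1} & -a^{-3} \end{array}\right)$. The $(1,1)$ and $(2,2)$ entries are immediate: $a^{m-1} \cdot a = a^m$ and $(-1)^{m-1} a^{-3(m-1)} \cdot (-a^{-3}) = (-1)^m a^{-3m}$, and the $(1,2)$ entry stays $0$. The only real computation is the lower-left entry: one gets
\[
a^{m-1} \cdot a^{-1} \;+\; \left(\frac{a^{(m-1)+2} + (-1)^m a^{2-3(m-1)}}{1+a^4}\right)\cdot a \;+\; (-1)^{m-1} a^{-3(m-1)} \cdot a^{-1},
\]
and I would clear the denominator $1+a^4$, collect terms, and check it equals $\frac{a^{m+2} + (-1)^{m+1} a^{2-3m}}{1+a^4}$. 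Concretely, writing everything over $1+a^4$, the first term contributes $a^{m-2}(1+a^4) = a^{m-2} + a^{m+2}$, the middle term contributes $a^{m+2} + (-1)^m a^{5-3m}$, and the third contributes $(-1)^{m-1} a^{-1-3m}(1+a^4) = (-1)^{m-1} a^{-1-3m} + (-1)^{m-1} a^{3-3m}$; summing and cancelling $a^{m-2}$ against $(-1)^{m-1}a^{-1-3m}$-type terms via the sign alternation should leave exactly $a^{m+2} + (-1)^{m+1} a^{2-3m}$.

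Since the $(1,1)$, $(1,2)$, and $(2,2)$ entries are elementary and the whole statement is a single $2\times 2$ matrix identity in $\Lambda = \mathbb{Z}[a,a^{-1}]$, there is no conceptual obstacle here; the verification is purely algebraic. The only mildly delicate point — the part I would be most careful about — is bookkeeping the signs $(-1)^m$ versus $(-1)^{m+1}$ and the exponents $2-3m$ versus $2-3(m-1) = 5-3m$ correctly through the induction, and confirming that the fraction genuinely lies in $\Lambda$ (equivalently, that $1+a^4$ divides $a^{m+2} + (-1)^{m+1} a^{2-3m}$ — which it does, being $a^{2-3m}(a^{5m}+(-1)^{m+1}) = a^{2-3m}(a^{5m} - (-1)^{5m})$ when $m$ is taken through the right substitution, or more simply follows inductively since each $A_2^{\pm 1}$ has entries in $\Lambda$). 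A direct non-inductive alternative would be to diagonalize $A_2$ and compute the $m$-th power in closed form, but the induction is cleaner and avoids introducing eigenvalues.
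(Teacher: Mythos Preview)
Your approach is the same as the paper's: induction on $m$, checking the base case directly and then multiplying by $A_2$ at the inductive step (the paper multiplies on the left, computing $A_2\cdot A_2^{k}$, while you multiply on the right; for powers of a single matrix this is immaterial). The handling of negative $m$ by a parallel downward induction is also what the paper does.

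That said, your displayed $(2,1)$-entry computation is not right as written. The product of row~2 of $A_2^{m-1}$ with column~1 of $A_2$ has only \emph{two} summands, $b_{21}\cdot a + b_{22}\cdot a^{-1}$; your leading term $a^{m-1}\cdot a^{-1}$ does not belong (it is $B_{11}C_{21}$, which occurs in no entry of $BC$). A couple of the subsequent exponents are also off: $a^{5-3m}\cdot a = a^{6-3m}$, not $a^{5-3m}$, and $a^{-3(m-1)}\cdot a^{-1} = a^{2-3m}$, not $a^{-1-3m}$. Once you drop the extra term and correct these, the numerator over $1+a^4$ becomes
\[
a^{m+2} + (-1)^m a^{6-3m} + (-1)^{m-1}a^{2-3m} + (-1)^{m-1}a^{6-3m},
\]
the two $a^{6-3m}$ terms cancel, and you are left with exactly $a^{m+2} + (-1)^{m+1}a^{2-3m}$ as claimed. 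So the strategy is sound; only the bookkeeping needs to be redone.
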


\begin{proof}
First, assume that $m$ is a positive integer.\\

Let $A= \left( \begin{array}{cc}
b_{11} & b_{12} \\
b_{21} & b_{22} \\
 \end{array} \right)$.\\
 
 I will use induction on $m$ to show this lemma.\\

 We can easily check that if $m=1$ then $b_{11}=a, b_{12}=0, b_{21}=a^{-1}$ and $b_{22}=-a^{-3}$ from $A_2$.\\

Suppose that the claim is true when $m=k$. i.e., $b_{11}=a^{k}$, $b_{12}=0$,  $b_{22}=(-1)^ka^{-3k}$, and\\
 
  $\displaystyle{b_{21}={a^{k+2}+(-1)^{k+1}a^{2-3k}\over 1+a^4}}$.\\

Now, consider $A_2^{k+1}=A_2A_2^{k}= \left( \begin{array}{cc}
b'_{11} & b'_{12} \\
b'_{21} & b'_{22} \\
 \end{array} \right)$.\\

 Then we have $b'_{11}=aa^{k}=a^{k+1}$, $b'_{12}=0$,  $b'_{22}=-a^{-3}(-1)^{k}a^{-3k}=(-1)^{k+1}a^{-3(k+1)}$ and\\
 
  $\displaystyle{b'_{21}=a^{-1}(a^{k})-a^{-3}\left({a^{k+2}+(-1)^{k+1}a^{2-3k}\over 1+a^4}\right)=a^{k-1}-{a^{k-1}+(-1)^{k+1}a^{-1-3k}\over 1+a^4}}$\\
  
  $\displaystyle{={{a^{k-1}(1+a^4)}-(a^{k-1}+(-1)^{k+1}a^{-1-3k})\over 1+a^4}={a^{k+3}+(-1)^{k+2}a^{-1-3k}\over 1+a^4}={a^{(k+2)+1}+(-1)^{(k+1)+1}a^{2-3(k+1)}\over 1+a^4}}.$\\

This completes the proof of the case when $m$ is a positive integer.\\

Similarly, we can show the case when $m$ is a negative integer.

\end{proof}

\section{A new invariant of rational 2-tangles}\label{s3}

For a Laurent polynomial $p(a)$, let $M(p(a))$ be the maximal power of $a$ in $p(a)$ and $m(p(a))$ be the minimal power of $a$ in $p(a)$. We say that a link $K$ is $\textit{connected}$ if no component of $K$ is split. Similarly, we say that  a tangle $T$ is $\textit{connected}$ if no component of $T$ is split.\\

K. Murasugi~\cite{11} proved the following theorem and it helps to prove Theorem~\ref{T5}.

\begin{Thm}[\cite{11}]\label{T4}
Suppose that $K$ is a connected reduced alternating projection of an alternating link $\mathbb{K}$.
Then $(M(X_K)-m(X_K))/4$ is the  number of crossings of $K$.

\end{Thm}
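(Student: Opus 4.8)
This is the classical span theorem of Kauffman and Murasugi for alternating links, so the plan is essentially to reproduce Kauffman's state-sum argument and read off $M-m$. Write $c$ for the number of crossings of $K$. First I would expand the bracket over states: resolving each crossing by its $A$- or $B$-smoothing produces a \emph{state} $s$, a disjoint union of $|s|$ loops in $S^2$, and
\[
\langle K\rangle=\sum_{s}a^{\,\alpha(s)-\beta(s)}\,(-a^{2}-a^{-2})^{\,|s|-1},
\]
where $\alpha(s)$ and $\beta(s)=c-\alpha(s)$ are the numbers of $A$- and $B$-smoothings of $s$. Let $s_A$ be the all-$A$ state and $s_B$ the all-$B$ state. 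The $s_A$-summand contributes monomials of top degree $c+2(|s_A|-1)$ and the $s_B$-summand monomials of bottom degree $-c-2(|s_B|-1)$; for any other state, comparing these degree bounds (and using that passing from $s_A$ to $s$ by single smoothing changes alters $|s|$ by $\pm1$ at each step) shows its top degree is at most $c+2(|s_A|-1)$, with equality only if the loop count never decreases along such a path.

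Next I would compute $|s_A|+|s_B|$. Since $K$ is connected, each complementary region of $K$ in $S^2$ is a disk, so by Euler's formula the connected $4$-valent graph $K$ has exactly $c+2$ complementary regions. Because $K$ is alternating, the two checkerboard colours are compatible with the smoothings, so the loops of $s_A$ are in natural bijection with the regions of one colour and those of $s_B$ with the regions of the other; hence $|s_A|+|s_B|=c+2$. This already gives
\[
M(\langle K\rangle)-m(\langle K\rangle)\le\bigl[c+2(|s_A|-1)\bigr]-\bigl[-c-2(|s_B|-1)\bigr]=2c+2\bigl(|s_A|+|s_B|-2\bigr)=4c .
\]

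The hard part will be ruling out cancellation at either extreme, i.e. showing that the coefficient of $a^{c+2(|s_A|-1)}$ in $\langle K\rangle$ equals $(-1)^{|s_A|-1}\ne 0$, and symmetrically at the bottom. Here I would invoke that $K$ is \emph{reduced}: in the $A$-state graph on the loops of $s_A$ (one edge per crossing joining the two loops locally incident to that crossing) a loop-edge would be exactly a nugatory crossing, hence there is none, so changing any single smoothing of $s_A$ merges two distinct loops and strictly decreases the loop count. Consequently, for a state $s$ with $\beta(s)\ge 1$ one gets $|s|\le |s_A|+\beta(s)-2$, which pushes the top degree of its contribution strictly below $c+2(|s_A|-1)$. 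Thus only $s_A$ attains the maximal degree, so its leading coefficient survives; the mirror argument with $s_B$ handles the minimal degree. Combined with the previous paragraph this forces $M(\langle K\rangle)-m(\langle K\rangle)=4c$.

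Finally, since $X_K(a)=(-a^{-3})^{w(\overrightarrow{K})}\langle K\rangle$ differs from $\langle K\rangle$ only by the unit monomial $(-a^{-3})^{w(\overrightarrow{K})}$, we have $M(X_K)-m(X_K)=M(\langle K\rangle)-m(\langle K\rangle)=4c$, so $(M(X_K)-m(X_K))/4=c$, the number of crossings of $K$. I expect the combinatorial bookkeeping in paragraphs two and three (the region count and, above all, the adequacy argument that uses \emph{reduced}) to be where all the real work lies; the rest is formal.
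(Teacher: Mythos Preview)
Your argument is the standard Kauffman state-sum proof of the span theorem and is correct. The one place I would tighten is the sentence ``for a state $s$ with $\beta(s)\ge 1$ one gets $|s|\le |s_A|+\beta(s)-2$'': as written it reads like a direct inequality, but what you really use is that \emph{every} state adjacent to $s_A$ (i.e.\ with $\beta=1$) has $|s|=|s_A|-1$ because reduced alternating diagrams are plus-adequate, and then the general bound follows by walking along any path from $s_A$ to $s$. Spelling that out in one more line would make the logic airtight.

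As for comparison with the paper: there is nothing to compare. Theorem~\ref{T4} is quoted from Murasugi's 1987 paper and is not proved here; the author simply cites it and uses it as a black box in the later arguments. So you have supplied a proof where the paper gives none. Your approach is in fact closer to Kauffman's independent proof of the same result (via the state model and adequacy) than to Murasugi's original argument, but either route yields the statement as used in this paper.
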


We recall that there is a perfect classification of rational 2-tangles which define a rational number.
The following theorem tells us  the coefficients vector of rational $2$-tangle diagrams classifies the corresponding rational 2-tangles. However, it is not good as the classical invariant. 
\begin{Thm}\label{T5}
Suppose that $T$ is a tangle diagram of a rational $2$-tangle $\mathbb{T}$ so that $<T>=f(a)<T_0>+g(a)<T_{\infty}>$.\\
 
 Then, $\mathbb{T}\approx \mathbb{T}_{\infty}$ if and only if $(f(a),g(a))=(-a^{-3})^k(0,1)$ for some integer $k$, where $\mathbb{T}_{\infty}$ is the tangle with the diagram $T_{\infty}$.
\end{Thm}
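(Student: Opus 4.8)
The plan is to analyze when the coefficient vector $(f(a),g(a))$ of a reduced alternating rational $2$-tangle diagram can be proportional (up to a unit $(-a^{-3})^k$) to $(0,1)$, and to show that this forces the diagram to have no crossings. One direction is immediate: if $\mathbb{T}\approx\mathbb{T}_\infty$, then $\mathbb{T}$ has a diagram with no crossings (namely $T_\infty$ itself), and since $(f,g)$ is a regular isotopy invariant and distinct diagrams of $\mathbb{T}$ differ by Reidemeister moves, the Type I moves introduce exactly factors of $(-a^{-3})^{\pm1}$; so $(f(a),g(a))=(-a^{-3})^k(0,1)$ for the appropriate $k$. (One must be slightly careful that isotopy of tangles rel boundary is generated by Reidemeister moves on diagrams, and that only Type I moves change the writhe-correction, which is standard.)

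For the converse, I would invoke Theorem~\ref{T1} to replace $\mathbb{T}$ by a reduced alternating standard plat presentation $T=q_4(w)$ with $w=\sigma_1^{\epsilon_1}\sigma_2^{-\epsilon_2}\cdots\sigma_1^{\epsilon_{2n-1}}$, all $\epsilon_i$ of one sign (and $\epsilon_1$ possibly zero). By Theorem~\ref{T2}, $(f(a),g(a))^t = A\,(0,1)^t$ is the second column of $A=A_1^{\epsilon_1}A_2^{-\epsilon_2}\cdots A_1^{\epsilon_{2n-1}}$. The key computational step is to track the extreme powers $M$ and $m$ of the entries of $A$ as one multiplies successively by the $A_i^{\pm1}$: each factor $A_1^{\pm\epsilon}$ or $A_2^{\mp\epsilon}$ is an explicit triangular $2\times2$ matrix (cf. Lemma~\ref{T3} for the pure powers), and one shows inductively that the crossing number $c$ of $T$ — that is, $c=\sum|\epsilon_i|$ — is recovered from the span of $f(a)$ and $g(a)$, essentially because the plat closure $\overline{T}=p_4(w)$ is a connected reduced alternating diagram so Theorem~\ref{T4} applies to $\overline{T}$, and the bracket of $\overline{T}$ is obtained from $(f(a),g(a))$ by closing up $T_0$ and $T_\infty$ (replacing $\mathcal A,\mathcal B$ by $\delta^{?}$, i.e. by $1$ and $\delta=-a^2-a^{-2}$ in some order). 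Thus if $(f(a),g(a))=(-a^{-3})^k(0,1)$, then after closing up the span of $X_{\overline T}$ is $0$, forcing $c=0$, hence $w$ is trivial and $\mathbb{T}\approx\mathbb{T}_\infty$.

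The main obstacle I anticipate is the bookkeeping in that inductive span computation: one has to verify that no unexpected cancellation collapses the degree span as the product $A_1^{\epsilon_1}A_2^{-\epsilon_2}\cdots$ is built up, and that the relation between the bracket of the tangle and the bracket of its plat closure is degree-faithful (so that the span of $X_{\overline T}$ genuinely equals $4c$ rather than something smaller). Lemma~\ref{T3} handles the building-block case of a single syllable; the general case should follow by an induction on $n$ in which the leading and trailing terms of each column of the partial product are controlled in sign and degree, exploiting the alternating (one-sign) hypothesis on the $\epsilon_i$ to prevent cancellation. Once the span of $X_{\overline T}$ is pinned to $4\sum|\epsilon_i|$, Theorem~\ref{T4} closes the argument, and the only remaining point is to note that a reduced alternating $q_4(w)$ with $\sum|\epsilon_i|=0$ is exactly $T_\infty$ (not $T_0$), which is forced by the plat pattern ending in $\sigma_1^{\pm1}$.
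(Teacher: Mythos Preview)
Your proposal is correct and follows essentially the same line as the paper's proof: the forward direction via Reidemeister moves (Type~I contributing the $(-a^{-3})^{\pm1}$ factors), and the converse via a reduced alternating standard plat form together with Murasugi's span result (Theorem~\ref{T4}) applied to the plat closure.

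The one point where the paper is more explicit than your outline is the treatment of the short words. You assert that $p_4(w)$ is a connected reduced alternating diagram, but this fails precisely when the reduced closure has at most one crossing (e.g.\ $w=\sigma_1^{\pm1}$, or $w=\sigma_2^{-\epsilon_0}\sigma_1^{\pm1}$ after the initial $\sigma_2$-block is absorbed by the top cap). The paper handles these boundary cases by direct computation: it evaluates $(f,g)$ explicitly for $w=\sigma_1^{\pm1}$, and uses Lemma~\ref{T3} to compute $(f,g)$ for $w=\sigma_2^{-\epsilon_0}\sigma_1^{\pm1}$, checking in each case that $f(a)\neq 0$. Only for the remaining words (those whose reduced closure has $\geq 2$ crossings, i.e.\ $|\epsilon_{2n-1}|\geq 2$ or $n\geq 2$) does the paper invoke Theorem~\ref{T4} to force $M(X_K)-m(X_K)\geq 4>0$. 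Your proposed inductive degree-tracking on the matrix product would give a uniform treatment of these cases, but you would still need to isolate the short words where the closure is not reduced; you correctly flag this as ``the main obstacle,'' and it is exactly where the paper's case analysis sits.
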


 \begin{proof}
 
 First, we suppose that $\mathbb{T}\approx \mathbb{T}_{\infty}$.\\
 
 Since $\mathbb{T}\approx \mathbb{T}_\infty$, we get $T_\infty$ from $T$ after applying a sequence of a finite number of the three Reidemeister moves.
  We note that $f(a)$ and $g(a)$ are invariant under regular isotopy of $T$.\\

Now, consider the first Reidemeister moves as in Figure~\ref{p4}.\\

\begin{figure}[htb]

\includegraphics[scale=.5]{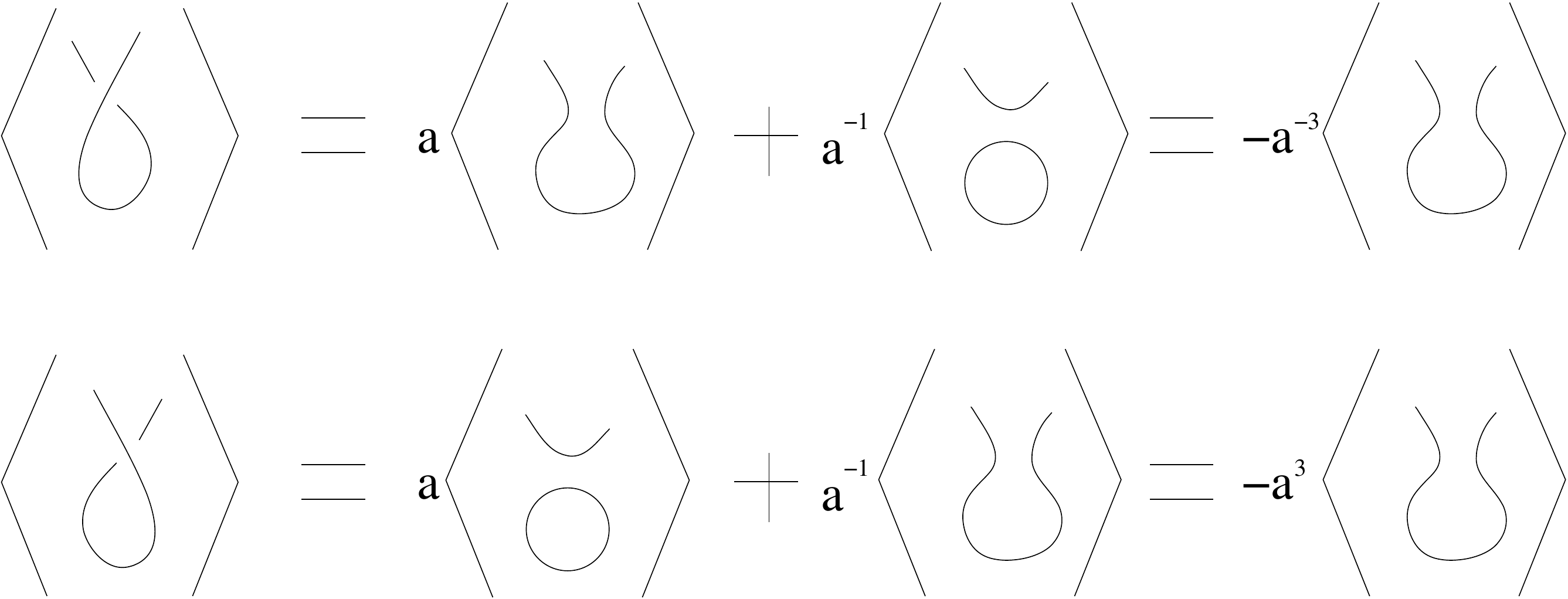}
\caption{}
\label{p4}
\end{figure}

This implies that $(f(a),g(a))=(-a^{-3})^k(0,1)$ for some integer $k$ since $<T_{\infty}>=0 \cdot <T_0>+1 \cdot <T_{\infty}>$.\\

In order to show the opposite direction, assume that there is a non-trivial reduced alternating projection $T$ so that $<T>=(-a^{-3})^k<T_\infty>$.\\

Let $q_4(w)$ be the plat presentation for $T$ which is standard and reduced alternating.\\

Then, either $w=\sigma_1^{\epsilon_1}$ for a non-zero integer $\epsilon_1$ or $w=\sigma_2^{-\epsilon_0}\sigma_1^{\epsilon_1}\sigma_2^{-\epsilon_2}\sigma_1^{\epsilon_3}\cdot\cdot\cdot\sigma_1^{\epsilon_{2n-1}}$ for some positive (negative) integers $\epsilon_i$ ($1\leq i\leq 2n-1$) and non-negative (non-positive) integer $\epsilon_0$.\\

If $w=\sigma_1^{\pm 1}$, then we see that $(f(a),g(a))=(a^{\mp 1},a^{\pm 1})\neq (-a^{-3})^k(0,1)$ for any $k$.\\

If   $w=\sigma_1^{\epsilon_1}$ for  $|\epsilon_1|\geq 2$, then we have $p_4(w)$ which represents a reduced alternating link $\mathbb{K}$ having a diagram $K=\overline{T}$. So, we note that the Kauffman polynomial of $K$ is not trivial. However, we should have trivial Jones polynomial for $K$ since $<T>=(-a^{-3})^k(0,1)$.
This contradicts the assumption.\\

If $w=\sigma_2^{-\epsilon_0}\sigma_1$ for non-negative integer $\epsilon_0$, then $A=A_2^{-\epsilon_0}A_1$.
By using Lemma 3.2, we have\\

 $(f(a),g(a))=\left(a^{-\epsilon_0-1},\displaystyle{{a^{-\epsilon_0+1}+(-1)^{-\epsilon_0}a^{3\epsilon_0+5}\over 1+a^4}}\right)\neq (-a^{-3})^k(0,1)$ for any $k$. 
\\

Similarly, if $w=\sigma_2^{-\epsilon_0}\sigma_1^{-1}$ for non-positive integer $\epsilon_0$ then we have\\

$(f(a),g(a))=\left(a^{-\epsilon_0+1},\displaystyle{{a^{-\epsilon_0+3}+(-1)^{-\epsilon_0}a^{3\epsilon_0-1}\over 1+a^4}}\right)\neq (-a^{-3})^k(0,1)$ for any $k$. \\

If $w=\sigma_2^{-\epsilon_0}\sigma_1^{\epsilon_1}$ for $\epsilon_1\geq 2$, then 
we have $p_4(w)$ which represents a non-trivial alternating link. So, this violates the assumption too.\\

If $w=\sigma_2^{-\epsilon_0}\sigma_1^{\epsilon_1}\sigma_2^{-\epsilon_2}\sigma_1^{\epsilon_3}\cdot\cdot\cdot\sigma_1^{\epsilon_{2n-1}}$ for some positive (negative) integers $\epsilon_i$ ($1\leq i\leq 2n-1,~n\geq 2$) and non-negative (non-positive) integer $\epsilon_0$, then we have the reduced alternating presentation $v=\sigma_1^{\epsilon_1}\sigma_2^{-\epsilon_2}\cdot\cdot\cdot\sigma_1^{\epsilon_{2n-1}}$ for $p_4(v)$ for some positive (negative) integers $\epsilon_i$ ($1\leq i\leq 2n-1,~n\geq 2$).\\

This implies that $M(X_K)-m(X_K)\geq 4$ by Theorem~\ref{T4}.\\

However, if $(f(a),g(a))=(0,(-a^{-3})^k)$ then $M(X_K)-m(X_K)=0$ by Theorem~\ref{T4}.\\

This also contradicts the assumption.\\

Therefore, if  $<T>=(-a^{-3})^k<T_\infty>$ then $\mathbb{T}\approx \mathbb{T}_\infty$.\\

This completes the proof.

 \end{proof}
 
\begin{Cor}\label{T6}
Suppose that $T$ and $T'$ are the projections onto the $xy$-plane of two rational 2-tangles $\mathbb{T}$ and $\mathbb{T}'$ in $B^3$ so that $<T>=f(a)<T_0>+g(a)<T_{\infty}>$ and $<T'>=f'(a)<T_0>+g'(a)<T_{\infty}>$ respectively.\\
 
 Then, $\mathbb{T}\approx \mathbb{T}'$ if and only if $(f'(a),g'(a))=(-a^{-3})^k(f(a),g(a))$ for some integer $k$.
\end{Cor}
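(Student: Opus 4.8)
\textbf{Proof proposal for Corollary~\ref{T6}.}

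The plan is to reduce the statement about two arbitrary rational $2$-tangles to Theorem~\ref{T5}, which handles the special case $\mathbb{T}' \approx \mathbb{T}_\infty$. The mechanism for this reduction is the group action of half Dehn twists on rational $2$-tangles together with the corresponding matrix action on the coefficient vectors. First I would recall that any rational $2$-tangle $\mathbb{T}$ can be carried to the trivial tangle $\mathbb{T}_\infty$ by a finite sequence of half Dehn twists of $B^3$ preserving the four boundary punctures; on the level of diagrams these twists correspond to pre/post-composing the braid word $w$ with powers of $\sigma_1$ and $\sigma_2$, hence on the level of coefficient vectors they act by left multiplication by the matrices $A_1^{\pm 1}$ and $A_2^{\pm 1}$ introduced before Theorem~\ref{T2}. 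The key algebraic observation is that each $A_i^{\pm 1}$ is invertible over $\Lambda[(1+a^4)^{-1}]$ (indeed $\det A_i^{\pm 1} = -a^{\mp 2} \ne 0$), so these matrices generate a group acting on coefficient vectors.

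The forward direction is the substantive one. Suppose $\mathbb{T} \approx \mathbb{T}'$. Choose a half Dehn twist homeomorphism $h$ of $(B^3,\mathbb{T}')$ taking $\mathbb{T}'$ to $\mathbb{T}_\infty$; applying the same $h$ to $(B^3,\mathbb{T})$ (via the homeomorphism $\mathbb{T}\to\mathbb{T}'$) takes $\mathbb{T}$ to a tangle $h(\mathbb{T})$. Since $h(\mathbb{T}) \approx h(\mathbb{T}') = \mathbb{T}_\infty$, Theorem~\ref{T5} gives that the coefficient vector of a diagram of $h(\mathbb{T})$ is $(-a^{-3})^\ell(0,1)$ for some integer $\ell$. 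Now I need to track how the coefficient vectors of $\mathbb{T}$ and $\mathbb{T}'$ transform under $h$: if $h$ corresponds to left multiplication by a product $B$ of the matrices $A_i^{\pm 1}$, then (second column of $B A_{\mathbb{T}'}$) $= (-a^{-3})^\ell(0,1)^t$ while (second column of $B A_{\mathbb{T}}$) is the coefficient vector of $h(\mathbb{T})$ — no wait, more carefully: the coefficient vector of $h(\mathbb{T})$ equals $B$ times the coefficient vector of $\mathbb{T}$ up to a unit $(-a^{-3})^j$ coming from Reidemeister~I moves created by the twist. So $B\,(f,g)^t = (-a^{-3})^{j}(f'',g'')^t$ where $(f'',g'')$ is the coefficient vector of $h(\mathbb{T})$, and similarly $B\,(f',g')^t = (-a^{-3})^{j'}(0,1)^t$. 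Since $h(\mathbb T)\approx\mathbb T_\infty$, Theorem~\ref{T5} applied to $h(\mathbb{T})$ forces $(f'',g'') = (-a^{-3})^{m}(0,1)$. Combining, $B(f,g)^t$ and $B(f',g')^t$ are scalar multiples of each other by a power of $(-a^{-3})$, and left-multiplying by $B^{-1}$ (valid over the localized ring, and the entries of $A_i$ and their inverses only introduce powers of $a$ and the factor $1+a^4$ which cancels) yields $(f',g') = (-a^{-3})^k(f,g)$.

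For the converse, suppose $(f',g') = (-a^{-3})^k(f,g)$. Pick half Dehn twists trivializing $\mathbb{T}$, corresponding to a matrix $B$ with $B(f,g)^t = (-a^{-3})^{m}(0,1)^t$ — this is possible precisely because $\mathbb{T}$ is a rational tangle, so some sequence of twists sends it to $\mathbb{T}_\infty$, and the coefficient vector of the image is a unit multiple of $(0,1)$ by Theorem~\ref{T5}. Then $B(f',g')^t = (-a^{-3})^{k}B(f,g)^t = (-a^{-3})^{k+m}(0,1)^t$, so the same twist sequence applied to $\mathbb{T}'$ produces a diagram whose coefficient vector is a unit multiple of $(0,1)$; by the reverse direction of Theorem~\ref{T5} that image tangle is $\approx \mathbb{T}_\infty$. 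Hence both $\mathbb{T}$ and $\mathbb{T}'$ become $\mathbb{T}_\infty$ after the same twist sequence (composed with its inverse for $\mathbb T$), so $\mathbb{T} \approx \mathbb{T}'$.

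The main obstacle I anticipate is making the bookkeeping of the unit factors $(-a^{-3})^j$ precise and verifying that the matrix $B$ associated to a half Dehn twist genuinely acts by left multiplication on the coefficient vector up to such a unit — this requires checking that Theorem~\ref{T2}'s matrix formula behaves correctly under pre- and post-composition of braid words, and in particular that post-composition (twisting near the plat closure end) also corresponds to matrix multiplication rather than something more complicated. A secondary technical point is ensuring that inverting $B$ stays within a ring where the identity $(f',g') = (-a^{-3})^k(f,g)$ can be read off unambiguously; since all coefficient vectors of rational $2$-tangles lie in $\Lambda$ (or $\Lambda$ localized at $1+a^4$, as Lemma~\ref{T3} shows) and $(-a^{-3})$ is a unit in $\Lambda$, this should cause no real difficulty once the action is pinned down.
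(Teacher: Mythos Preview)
Your approach is essentially the paper's: both prepend the inverse braid word $w^{-1}$ to $T$ and $T'$ (you phrase this as left-multiplying the coefficient vectors by the matrix $B=A_w^{-1}$) so that one tangle becomes $\mathbb{T}_\infty$, invoke Theorem~\ref{T5} on the other, and then use invertibility of the operation to transfer the conclusion back. Your stated obstacle dissolves once you note that Theorem~\ref{T2} makes the matrix action on coefficient vectors \emph{exact}---prepending a braid generator introduces no stray $(-a^{-3})^j$ factor, and since $\det A_i^{\pm 1}=-a^{\mp 2}$ is already a unit in $\Lambda$, no localization at $1+a^4$ is needed.
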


\begin{proof}

Let $q_4(w)$ be the plat presentation for $T$ and $q_4(v)$ be the  plat presentation for $T'$.

\begin{figure}[htb]
\includegraphics[scale=.9]{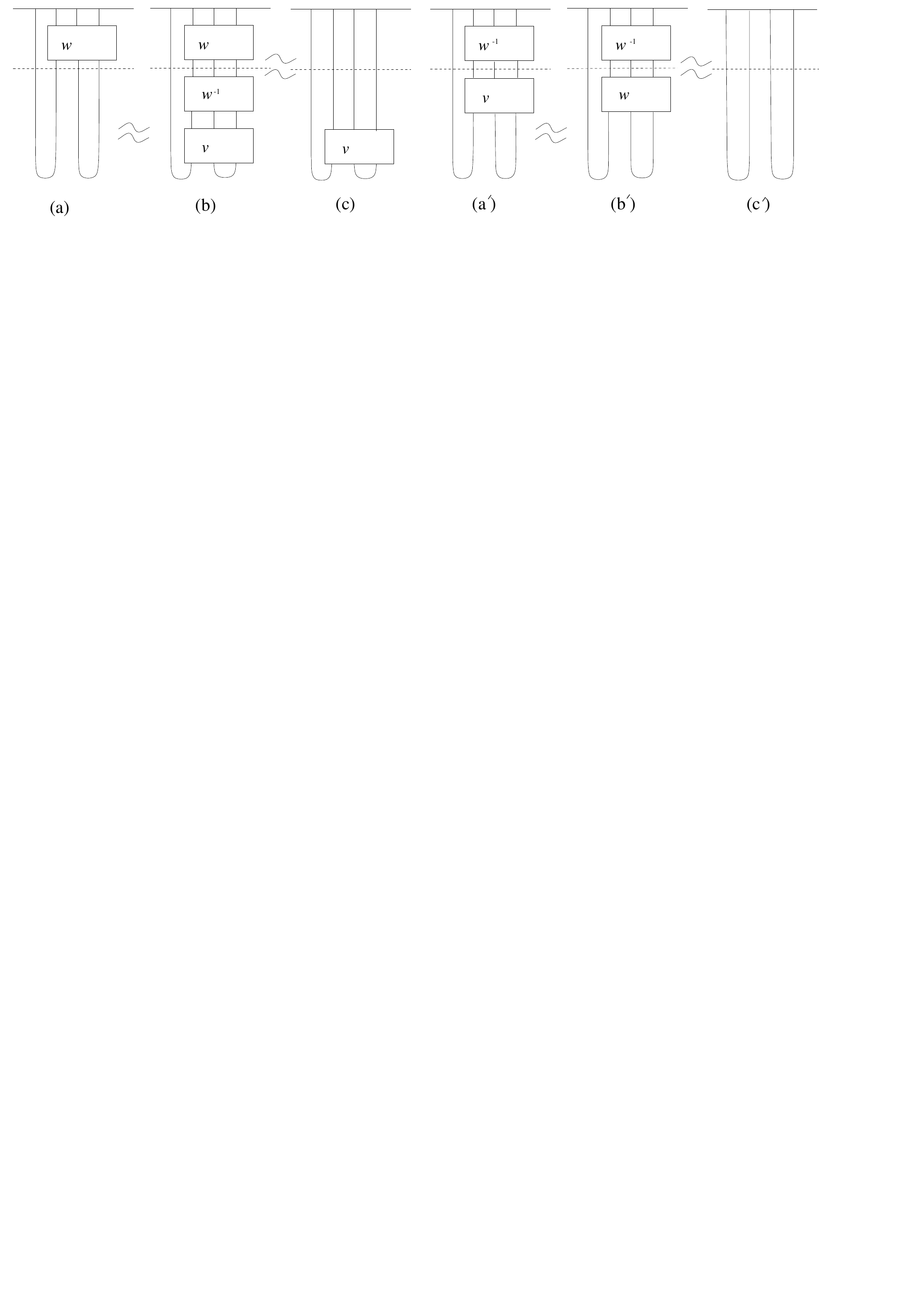}
\vskip -630pt
\caption{
\label{p5}}

\end{figure}
Now, consider $e=w^{-1}w$ and $w^{-1}v$.\\

Let $\mathbb{T}_1$ and $\mathbb{T}_1'$ be the rational 2-tangles of the  plat presentations $q_4(e)$ and $q_4(w^{-1}v)$ respectively. We note that $\mathbb{T}_1\approx\mathbb{T}_{\infty}$.\\

Then by Theorem~\ref{T5}, $ \mathbb{T}_1'\approx \mathbb{T}_1\approx\mathbb{T}_{\infty}$ if and only if $<T_1'>=(-a^{-3})^k<T_\infty>$ for some $k$.\\

Now, we claim that  $<T_1'>=(-a^{-3})^k<T_\infty>$ for some $k$ if and only if $<T'>=(-a^{-3})^{k'}<T>$ for some $k'$. To prove this, we repeatedly use the three axioms to remove the crossings below the dotted line of the diagram $(a)$ and the diagram $(b)$ respectively to have  $<T_1'>=(-a^{-3})^k<T_\infty>$ for some $k$. Then we repeatedly use the three axioms to remove the crossings above the dotted line of the diagram $(a)$ and the diagram $(b)$ respectively. (Refer to the diagram $(a)$ and $(b)$ of Figure~\ref{p5}.) Since $v=ww^{-1}v$, we note that if $<T_1'>=(-a^{-3})^k<T_\infty>$ for some $k$ then $<T'>=(-a^{-3})^{k'}<T>$ for some $k'$.
Similarly, we note that if $<T'>=(-a^{-3})^k<T>$ for some $k$ then $<T_1'>=(-a^{-3})^{k'}<T_\infty>$ for some $k'$ by using the diagrams $(a'),(b')$ and $(c')$ since $e=w^{-1}w=w^{-1}v$.\\

Now, it is enough to show that  $\mathbb{T}_1\approx \mathbb{T}_1'$ if and only if $\mathbb{T}\approx \mathbb{T}'$. This is also proved by the diagrams of Figure~\ref{p5}.\\

Therefore, $\mathbb{T}\approx \mathbb{T}'$ if and only if $<T'>=(-a^{-3})^{k'}<T>$ for some $k'$.\\

This completes the proof.

\end{proof}

\section{A new invariant of rational 3-tangles}\label{s4}

\begin{figure}[htb]
\includegraphics[scale=.45]{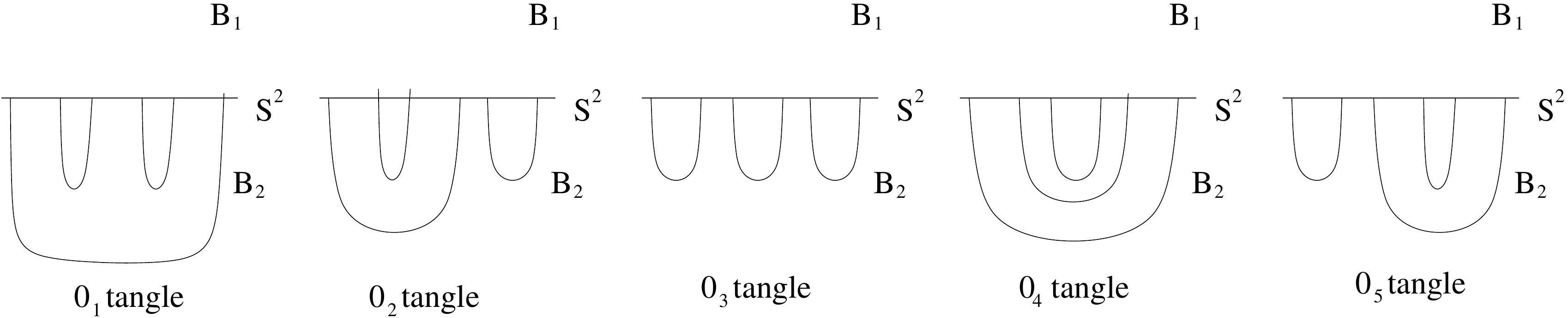}
\caption{}
\label{p6}
\end{figure}

Now, consider a rational $3$-tangle $\mathbb{T}$. Let $T$ be a rational $3$-tangle diagram.\\

H. Cabrera-Ibarra [3] defined the bracket polynomial of the rational 3-tangle diagram $T$ of $\mathbb{T}$ as $<T>=f^T_1(a)<{0_1}>+f^T_2(a)<{0_2}>+f^T_3(a)<{0_3}>+f^T_4(a)<{0_4}>+f^T_5<{0_5}>$, where 
$f^T_i(a)$ are Laurent polynomials in $a$ and $a^{-1}$ that are obtained by starting with $T$ and using the three axioms repeatedly until only the five trivial tangle diagrams $<{0_j}>$ in the expression given for $T$ are left. (See Figure~\ref{p6}.)   \\

Then, we define the vector $v_T=(f^T_1(a),f^T_2(a),...,f^T_5(a))$ for a rational 3-tangle diagram $T$. Then we note that the vector $v_T$ is an invariant under regular isotopy of $T$. Especially we have the following theorem.

\begin{Thm}\label{T7} If $\mathbb{T}\approx \mathbb{T}'$  then $v_T=(-a^{-3})^kv_{T'}$ for some $k$, where $T$ is a tangle diagram of  $\mathbb{T}$ and $T'$ is a tangle diagram of $\mathbb{T}'$. 
\end{Thm}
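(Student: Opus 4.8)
The plan is to reduce Theorem~\ref{T7} to the statement that the coefficient vector $v_T$ is invariant under the three Reidemeister moves, where the second and third moves are already handled by regular isotopy invariance (as noted immediately before the theorem statement), so the only thing left to check is the behaviour under a Type~I Reidemeister move. Since $\mathbb{T}\approx\mathbb{T}'$ means that some diagram $T'$ of $\mathbb{T}'$ is obtained from some diagram $T$ of $\mathbb{T}$ by a finite sequence of Reidemeister moves (rel boundary), it suffices to show that each such move multiplies $v_T$ by a power of $(-a^{-3})$; composing the moves then gives $v_T=(-a^{-3})^k v_{T'}$ for the total signed count $k$ of Type~I moves used.

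First I would fix notation: write $v_T=(f^T_1,\dots,f^T_5)$ as above, and recall the skein relation $\langle L\rangle = a\langle L_0\rangle + a^{-1}\langle L_\infty\rangle$ together with $\langle L\sqcup\bigcirc\rangle=(-a^2-a^{-2})\langle L\rangle$. The key computation is the effect of a curl: resolving a single positive curl by the skein relation produces two terms, one of which is the original diagram and the other the original diagram with a disjoint circle, and the standard bracket calculation gives $\langle\text{positive curl}\rangle=-a^3\langle(\text{no curl})\rangle$, and similarly $-a^{-3}$ for a negative curl. Because the five trivial tangle diagrams $0_1,\dots,0_5$ are a fixed $\Lambda$-spanning set into which every rational $3$-tangle diagram is resolved, and because the curl occurs in a small disk disjoint from the rest of the diagram, this local factor of $-a^{\pm3}$ pulls out of the entire expansion uniformly; that is, a Type~I move on $T$ changes $v_T$ to $(-a^{\pm3})\,v_T$. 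One should also note the orientation-independence: the writhe-style bookkeeping is not needed here because we are not normalizing, so we simply accumulate one factor $(-a^{\pm3})$ per Type~I move.

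Second, I would assemble the induction on the length $N$ of the Reidemeister sequence connecting $T$ to $T'$. The base case $N=0$ is trivial ($v_T=v_{T'}$). For the inductive step, let $T''$ be the diagram after the first move. If that move is a Type~II or Type~III move, then $v_{T''}=v_{T}$ by regular-isotopy invariance of the coefficient vector (already established in the text preceding the theorem); if it is a Type~I move, then $v_{T''}=(-a^{\pm3})^{\pm1}v_T$ by the curl computation above. By the inductive hypothesis $v_{T''}=(-a^{-3})^{k''}v_{T'}$ for some integer $k''$, and combining the two relations gives $v_{T}=(-a^{-3})^{k}v_{T'}$ for some integer $k$. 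Finally, I would remark that Reidemeister moves performed in the interior of $B^3$ (away from $\partial B^3$) are exactly what the ambient isotopy rel boundary furnishes, so the diagrammatic reduction is legitimate; strictly one should also invoke that any two diagrams of isotopic tangles are related by such a sequence of moves, which is the tangle analogue of the Reidemeister theorem.

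The main obstacle, and the only place where real care is needed, is the claim that the local curl factor $-a^{\pm3}$ factors out of the full five-term expansion rather than interacting with the resolution. This is where one uses that the bracket expansion is computed by applying the skein axioms locally in disks that can be chosen disjoint from the curl's disk, so the order of resolutions does not matter: resolve the curl first, obtaining $-a^{\pm3}$ times the bracket of the curl-free diagram, and only then expand the rest into $0_1,\dots,0_5$. A secondary subtlety is bookkeeping the sign/parity: different Type~I moves contribute $-a^{3}$ or $-a^{-3}$, but since $(-a^{3})=(-a^{-3})^{-1}$ up to the unit $(-1)$... actually $(-a^3)=(-1)^2 a^3 (-a^{-3})\cdot(-a^{-3})^{-1}$; more simply one checks $-a^3 = (-a^{-3})^{-1}$ fails by a sign, so one must be slightly careful and instead observe $(-a^{\pm 3})$ are each $\pm(-a^{-3})^{\mp1}$ and track that the total is still of the form $(-a^{-3})^k$ — in fact $-a^{3}=(-a^{-3})^{-1}$ is false, but $(-a^3)(-a^{-3})=a^0\cdot 1=1$ shows $-a^3=(-a^{-3})^{-1}$ is true after all since $(-a^3)(-a^{-3})=a^{3}a^{-3}=1$. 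So the factors are genuine integer powers of $(-a^{-3})$ and the bookkeeping closes cleanly.
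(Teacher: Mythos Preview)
Your argument is correct and is exactly the approach the paper has in mind: the paper's entire proof is the single line ``It is the generalization of the proof for the one direction of Theorem~\ref{T5},'' which is precisely the Reidemeister-move argument you spell out (regular-isotopy invariance handles Types~II and~III, and the curl computation supplies the factor $-a^{\pm 3}$ for Type~I). Your momentary hesitation about whether $-a^{3}=(-a^{-3})^{-1}$ was unnecessary---as you eventually verify, $(-a^{3})(-a^{-3})=1$---so the bookkeeping is fine.
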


\begin{proof}
It is the generalization of the proof for the one direction of Theorem~\ref{T5}.
\end{proof}

\begin{figure}[htb]
\includegraphics[scale=.7]{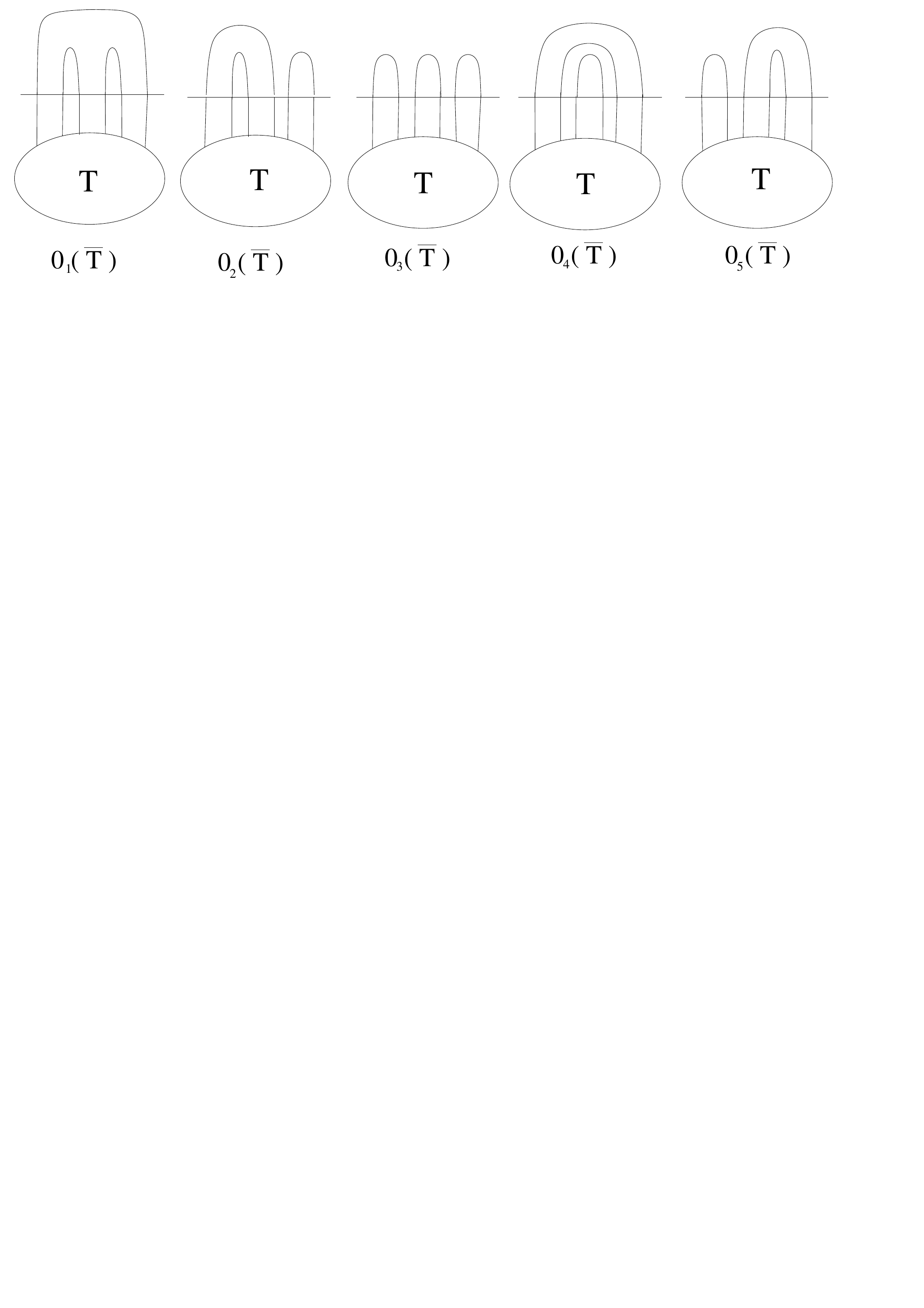}
\vskip -450pt
\caption{}
\label{p7}
\end{figure}

A link diagram $L$ is the $0_i$-$\emph{closure}$ of a rational 3-tangle diagram $T$, denoted by $0_i(\overline{T})$, if $L$ is obtained from $T$ by connecting the six endpoints of $T$ as the pattern shown above. (See Figure~\ref{p7}.)\\

\begin{figure}[htb]
\includegraphics[scale=.8]{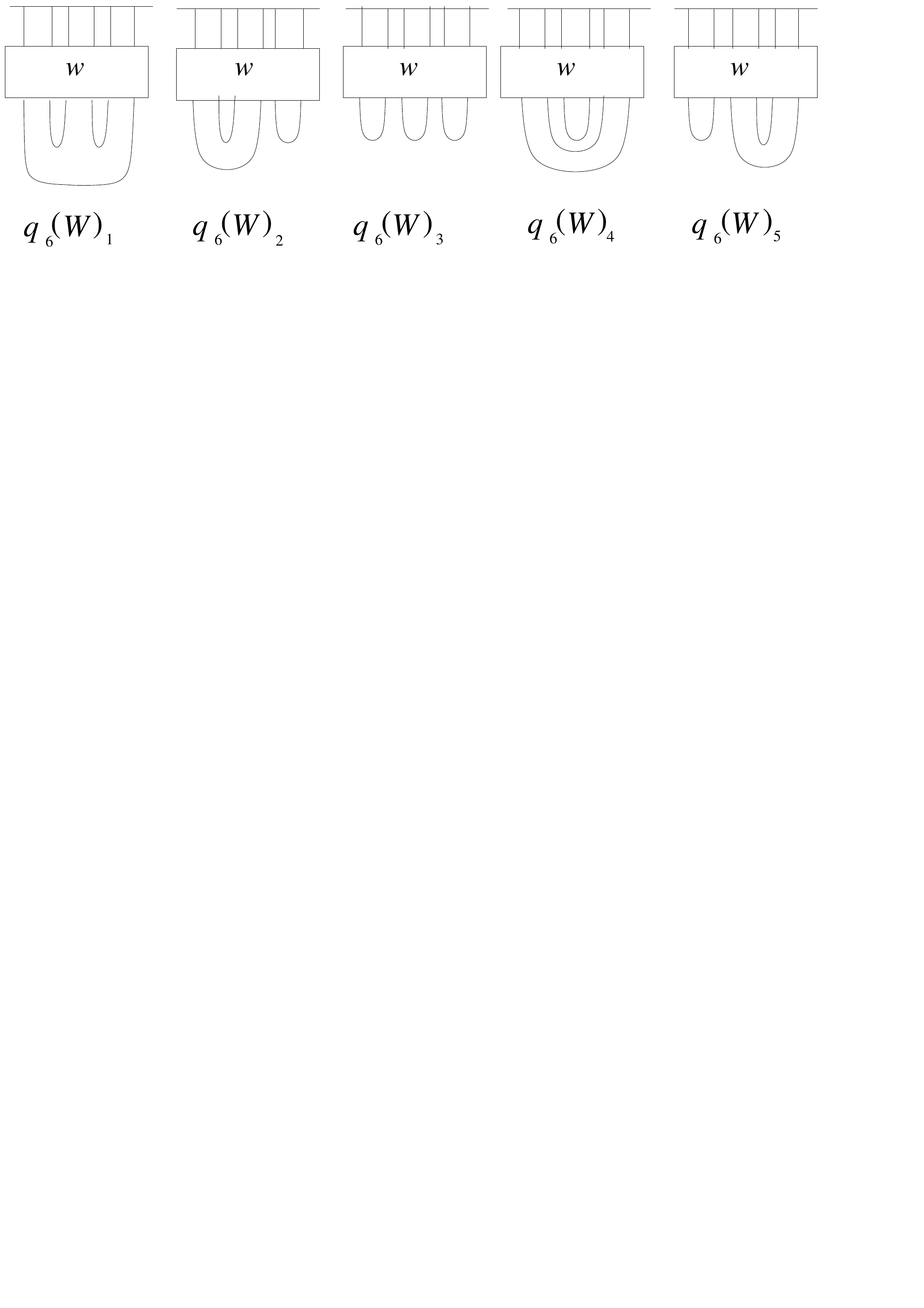}
\vskip -550pt
\caption{}
\label{p8}

\end{figure}

Suppose that $T$ has a $6$-plat presentation $q_6(w)$. 
Then, let $w=\sigma_{k_1}^{\epsilon_1}\sigma_{k_2}^{\epsilon_2}\cdot\cdot\cdot
\sigma_{k_{n-1}}^{\epsilon_{n-1}}\sigma_{k_{n}}^{\epsilon_{n}}$ for some non-zero integers $\epsilon_i$ ($1\leq i\leq n$), where $k_i\in\{1,2,3,4,5\}$.\\

Now, we define $q_6(w)_i$ which is a $6$-plat presentation of a rational $3$-tangle $T$ under the connectivity pattern of bottom endpoints induced by $0_i$ as in Figure 8. For example, $q_6(w)_3=q_6(w)$.
Then we say that a rational 3-tangle $\mathbb{T}$ is a $\emph{6-plat  tangle}$  if $\mathbb{T}$ is isotopic to a tangle $\mathbb{T}'$ and the projection of $\mathbb{T}'$ onto the $xy$-plane is a  $6$-plat presentation $q_6(w)_i$ for some $i$. Then, we say that a 6-plat tangle diagram is reduced alternating if the $6$-plat presentation is reduced alternating. \\

We note that each rational 3-tangle is a $6$-plat tangle. (Refer to~\cite{12} and~\cite{9}.) However, the set of all reduced alternating $6$-plat tangle diagrams is a proper subset of the collection of reduced alternating rational $3$-tangle diagrams. \\

In order to cover all reduced alternating rational $3$-tangle diagrams, we  define $\sigma_6$ as in Figure~\ref{p20}. Generally, the defined generators $\sigma_i$ ($1\leq i\leq 5)$ also can be defined from the twisting obtained by the extension to $B^3$ of the half Dehn twists between adjacent endpoints on $\partial B^3$. (Refer to~\cite{12}.)\\

\begin{figure}[htb]
\includegraphics[scale=.6]{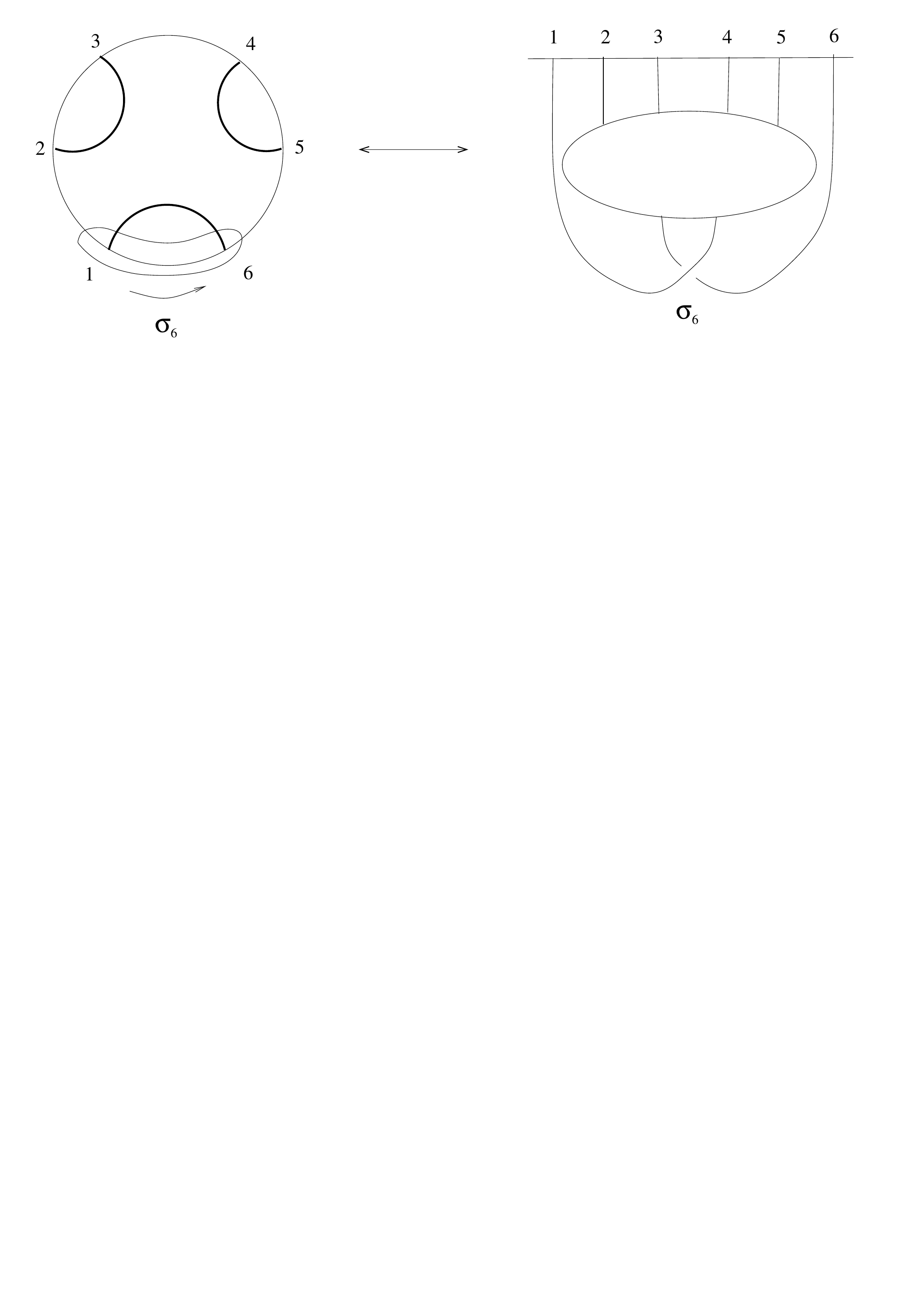}
\vskip -380pt
\caption{}
\label{p20}

\end{figure}

Let $|T|$ be the minimal crossing number of the diagram $T$. 
We say that a crossing \emph{is closest to} $S^2$ if at least two arcs of the four end arcs from the crossing directly meet the horizontal line which stands for $S^2$ without any crossing. Then we have the following lemma.
\begin{Lem}\label{T8}
Suppose that $T$ is a reduced alternating rational $3$-tangle diagram. If $|T|\geq 2$ then $0_i(\overline{T})$ is a reduced alternating link diagram for some $i$.
\end{Lem}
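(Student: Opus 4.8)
## Proof Proposal for Lemma~\ref{T8}

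The plan is to analyze how the closure arcs of $0_i(\overline{T})$ can fail to preserve the reduced alternating property of $T$, and to show that at least one of the five closure patterns always avoids all such failures. Since $T$ itself is reduced alternating with $|T| \geq 2$, the only way $0_i(\overline{T})$ can fail to be reduced alternating is either (a) a closure arc joins two strands in a way that creates a non-alternating segment along the boundary, or (b) a closure arc joins two endpoints of the same boundary arc-fragment of the tangle so that a crossing becomes removable by a Reidemeister I move (a ``nugatory'' crossing introduced by the closure). First I would set up the standard picture: take a $6$-plat presentation $q_6(w)_i$ realizing $T$ (which exists since every rational $3$-tangle is a $6$-plat tangle, allowing $\sigma_6$), put it in reduced alternating form, and read off the six endpoints on $S^2$ together with the over/under pattern of the crossings ``closest to $S^2$'' in the sense defined just before the lemma.

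The key step is a local analysis at the crossings closest to $S^2$. When we attach the closure arcs for pattern $0_i$, each closure arc runs along the boundary region; the alternating condition is violated exactly when, travelling along a closure arc from one endpoint to the next, the two crossings we meet at the two ends have the ``wrong'' relative over/under sense. I would encode this as a parity/sign condition at each of the (at most a few) top endpoints, depending only on $i$ and on the signs $\epsilon_j$ of the outermost generators of $w$. Because there are five closure patterns $0_1,\dots,0_5$ and they pair up the six top endpoints in the five distinct planar ways, a counting argument shows the ``bad'' patterns form a strict subset: if $0_i(\overline{T})$ failed to be reduced alternating for \emph{every} $i$, we would get an over-determined system of sign constraints on the outermost crossings of $T$ that is incompatible with $T$ being itself reduced alternating. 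The case $|T| \geq 2$ is what rules out the degenerate situation where $T$ has too few crossings for this argument to bite (if $|T| \le 1$ one can check directly that some closures are non-alternating or reducible, e.g. the $0$-strand or single-crossing tangles).

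Concretely I would proceed as follows. First, reduce to the $6$-plat model and normalize $w$ so that its outermost syllables $\sigma_{k_1}^{\epsilon_1}$ and $\sigma_{k_n}^{\epsilon_n}$ are exhibited; these carry the crossings closest to $S^2$. Second, for each $i\in\{1,\dots,5\}$ write down the condition ``$0_i(\overline{T})$ is reduced alternating'' as a conjunction of local conditions at the boundary, using that alternating-ness is a local property at each crossing and that no R1-reducible crossing can be created away from the closure arcs (since $T$ is already reduced alternating). Third, observe that the local conditions for different $i$ are governed by the cyclic/planar matching of the six boundary points, so they cannot all fail simultaneously; pick the good index $i$. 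Finally, confirm that with $|T|\geq 2$ no closure of this good type collapses the whole diagram, so the resulting link diagram genuinely has a reduced alternating projection.

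The main obstacle I anticipate is step two: pinning down precisely, in the $6$-plat language with the extra generator $\sigma_6$, which closure arcs of $0_i$ run adjacent to which crossings closest to $S^2$, and getting the over/under bookkeeping exactly right so that the ``not all $i$ can be bad'' count is rigorous rather than heuristic. A secondary subtlety is handling the possibility that a closure arc is parallel to an end arc of $T$ that itself carries no crossing near $S^2$ (so the relevant crossing is ``one layer in''); the definition of ``closest to $S^2$'' is tailored to cover this, but one must check the definition actually applies in each configuration. I expect the clean way to organize this is a short case analysis on the isotopy type of the outermost band of $T$ (determined by $k_1,\epsilon_1$ and $k_n,\epsilon_n$), with the $|T|\geq 2$ hypothesis invoked to discard the finitely many small tangles where the band argument degenerates.
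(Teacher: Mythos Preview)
Your overall strategy---analyze the crossings closest to $S^2$ and argue that at least one closure pattern avoids creating nugatory crossings---is exactly the paper's approach. But two points deserve comment.

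First, your concern (a) is unnecessary. For an alternating tangle diagram in the disk, the checkerboard shading forces the six boundary regions to alternate colors; every planar (non-crossing) matching of the six endpoints therefore extends the shading consistently, so each $0_i(\overline{T})$ is automatically alternating. The paper tacitly assumes this and treats only concern (b), i.e.\ whether some crossing becomes removable by a Reidemeister~I move. Dropping (a) will simplify your bookkeeping considerably.

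Second, the promised ``counting argument'' is the real gap. A closest crossing $c$ can become nugatory in $0_i(\overline{T})$ not only when the closure arc directly caps off the two boundary strands of $c$, but also when a longer path through $T$ returns to an adjacent endpoint and the closure completes a separating loop through $c$ (these are the cases $(d)$, $(e)$, $(g)$, $(h)$, etc.\ in the paper's Figure~\ref{p9}). Which of these longer paths exist depends on the connectivity of $T$ one layer deeper, not just on the signs of the outermost syllables of $w$. So the ``local conditions'' you want to encode are not determined by $(k_1,\epsilon_1)$ alone, and the hoped-for over-determined sign system does not materialize in any obvious way. The paper handles this by an explicit, finite case analysis: fix which $\sigma_j$ contributes a closest crossing, try one closure, and if it fails (producing one of the bad sub-configurations), switch to a second specific closure and check that the previously bad sub-configuration now forces this second closure to be reduced; the hypothesis $|T|\ge 2$ enters to rule out the degenerate sub-configurations. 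The remaining generators $\sigma_2,\dots,\sigma_6$ are then dispatched by the $60^\circ$ rotational symmetry of the disk model. Your final paragraph already anticipates that the argument will devolve into exactly this kind of case check; it does, and that is how the paper proceeds.
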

\begin{proof}

\begin{figure}[htb]
\includegraphics[scale=.8]{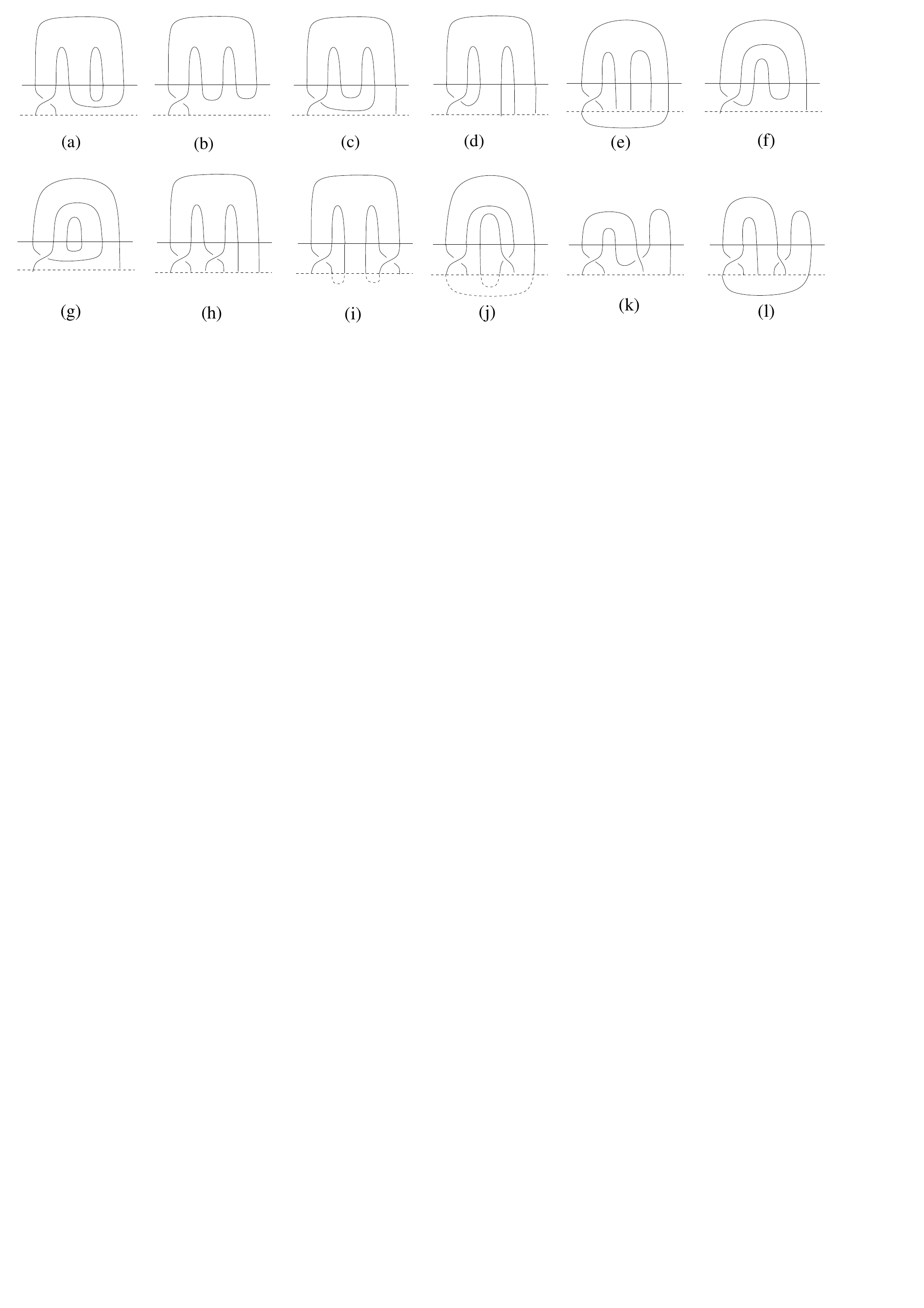}
\vskip -500pt
\caption{}
\label{p9}
\end{figure}

First of all, we claim that if a crossing of $0_i(\overline{T})$ is not a closest crossing then the crossing cannot be removed by the first Reidemeister move.
If a crossing is not closest one and the crossing is removed by Type I Reidemeister move, then the self crossing should be removed below $S^2$. However, $T$ is reduced alternating. This shows this claim.\\

 Suppose that $T$ has a closest crossing to $S^2$ which is obtained by $\sigma_1^{\pm 1}$ as in Figure~\ref{p9}. \\

First assume that $T$ does not have any other closest crossing except the crossing. \\

Consider $0_1$-closure of $T$. Then there are five cases $(a)-(e)$ which are not reduced alternating.\\

We note that the cases $(a),(b)$ and $(c)$ are impossible since $|T|\geq 2$ and $T$ is a reduced alternating rational $3$-tangle diagram. In the case $(d)$ of Figure~\ref{p9}, take $0_3$-closure of $T$ instead of $0_1$-closure.
Then, we have the diagrams $(f)$ and $(g)$ to have the crossing removed by the first Reidemeister move. However, they are also impossible since $|T|\geq 2$  and $T$ is a reduced alternating rational $3$-tangle diagram. Therefore, $0_3(\overline{T})$ should be reduced alternating. Similarly, in the case $(e)$ of Figure~\ref{p9}, we can check that $0_2(\overline{T})$ should be reduced alternating.\\

Therefore, $0_i(\overline{T})$ is reduced alternating for some $i$.\\

Now, assume that $T$ does have another closest crossing to $S^2$ possibly obtained by $\sigma_3^{\pm 1}$ or $\sigma_5^{\pm 1}$.  \\

Then consider $0_1(\overline{T})$ as in the diagram $(h)$ and $(i)$ of Figure~\ref{p9}.\\

In the diagram $(h)$, take $0_2$-closure to $T$. Then $0_2(\overline{T})$ should be  reduced alternating since  $T$ is a reduced alternating rational $3$-tangle diagram. If we have a case $(i)$, then
take $0_4$-closure of $T$. 
Then, we also note that $0_4(\overline{T})$ is a reduced alternating link diagram.\\

It is clear that $0_2(\overline{T})$ is reduced alternating if the three crossings by $\sigma_1^{\pm 1}, \sigma_3^{\pm1}$ and $\sigma_5^{\pm 1}$ are closest crossings to $S^2$.\\

At last, we assume that $T$ does have another closest crossing to $S^2$ obtained by $ \sigma_4^{\pm 1}$ as the diagram $(j)$. In order to have a crossing removed by the first Reidemeister move, at least one of the dotted arcs is a real arc. However, by taking $0_2$-closure of $T$ as in the diagrams $(k)$ and $(l)$, we note that the diagrams are impossible since $T$ is reduced alternating. So, we can eliminate this case as well.\\

Therefore, we just show that if $T$ has a closest crossing to $S^2$ which is obtained by $\sigma_1^{\pm 1}$ then $0_i(\overline{T})$ is reduced alternating for some $i$.\\

\begin{figure}[htb]
\includegraphics[scale=.9]{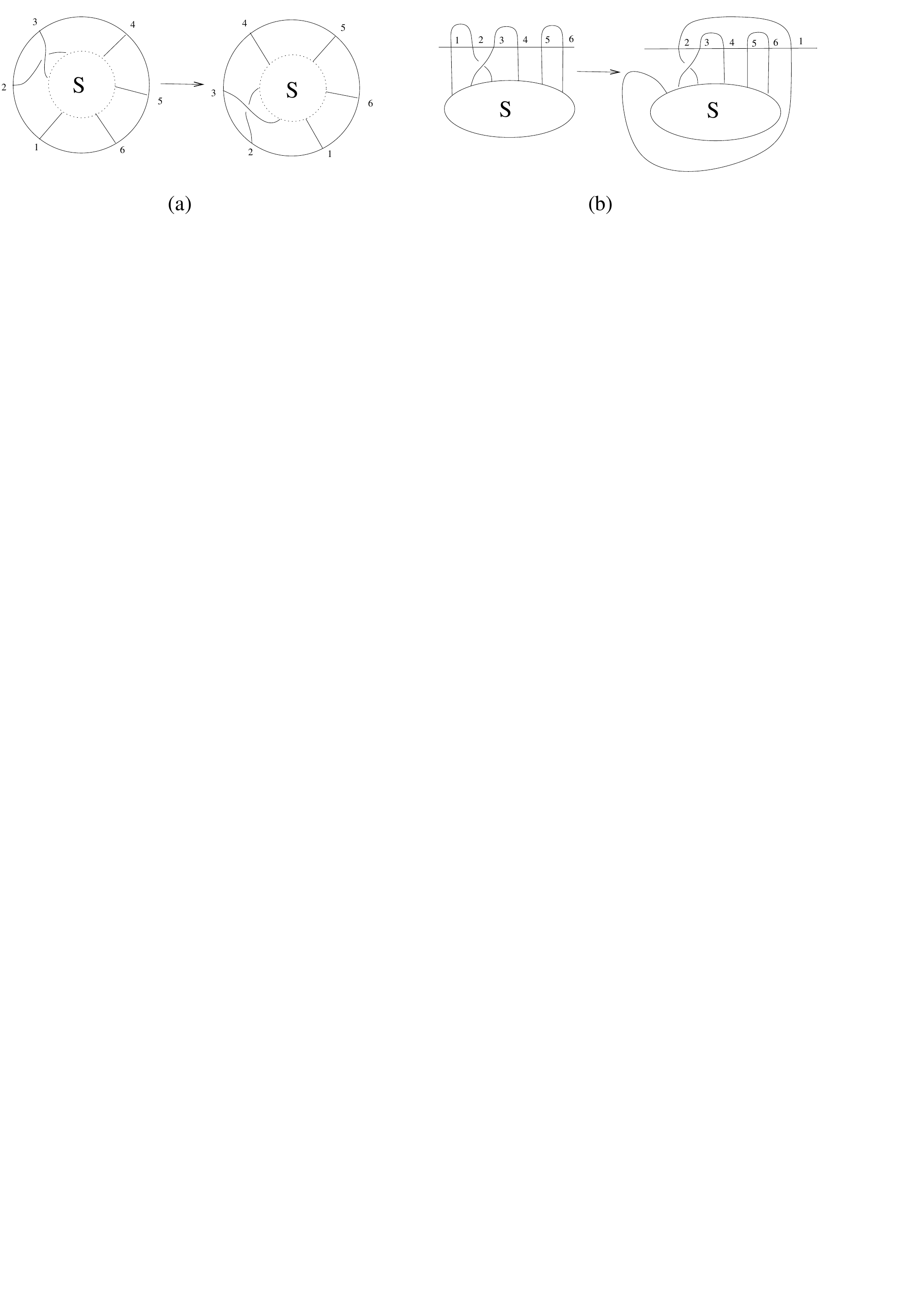}
\vskip -630pt
\caption{}
\label{p10}
\end{figure}

If $T$ has a closest crossing to $S^2$ which is obtained by $\sigma_2^{\pm 1}$, we modify the previous diagrams by rotating $60^\circ$ counterclockwise in the disk model  as the diagram $(a)$ of Figure~\ref{p10}. Then this case also can be proved by the previous arguments. (Refer to the diagram $(b)$ of Figure~\ref{p10}.)\\

Similarly, for the rest of  cases to prove this lemma, we modify the previous cases by rotating a multiple of $60^\circ$ in the disk model of tangle diagrams. \\

This completes the proof.\\

\end{proof}

\begin{Lem}\label{T9}
If $|T|=1$ Then $X_{0_i(\overline{T})}=(-a^2-a^{-2})^{(t-1)}$, where $i\in\{1,2,3,4,5\}$ and $t$ is the number of components of $0_i(\overline{T})$. 
\end{Lem}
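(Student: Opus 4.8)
The plan is to observe that a rational $3$-tangle diagram $T$ with $|T|=1$ is, up to regular isotopy, one of a very short list of essentially trivial diagrams: it has exactly one crossing, produced by a single $\sigma_k^{\pm 1}$ acting on the trivial tangle, and the rest of the diagram consists of the two remaining trivial ``cups'' with no crossings. So the first step is to enumerate these diagrams. Since $T$ is a $6$-plat tangle (every rational $3$-tangle is, as noted after Theorem~\ref{T7}, possibly after allowing $\sigma_6$), a one-crossing diagram is $q_6(\sigma_k^{\pm 1})_i$ for some $k\in\{1,\dots,6\}$ and some connectivity pattern $i\in\{1,\dots,5\}$. I would reduce this finite list further by noting that most choices of $k$ relative to the cup pattern give a crossing removable by a Reidemeister~I move, hence are not reduced; the surviving diagrams all look the same up to the $60^\circ$ rotational symmetry of the disk model exploited already in the proof of Lemma~\ref{T8}.

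Next I would compute the bracket directly. Resolving the single crossing via the Kauffman skein relation $<T> = a\,<T'> + a^{-1}<T''>$ writes $<T>$ as an $\Lambda$-linear combination of two crossingless diagrams, each of which is a disjoint union of the closure arcs with a number of free loops. Taking the $0_i$-closure and using $<\bigcirc \sqcup D> = (-a^2-a^{-2})<D>$ repeatedly, one gets $X_{0_i(\overline T)}$ (after the writhe normalization $(-a^{-3})^{w}$, which for a single crossing contributes $(-a^{\mp 3})^{\pm 1} = -a^{-3\cdot(\pm1)\cdot(\pm1)}$... in any case a monomial whose contribution I track). The point is that a one-crossing link diagram, once the crossing is smoothed both ways, is an unlink: in one smoothing the crossing disappears into a Reidemeister~I ``kink'' on an unknotted loop, and in the other it merges/splits trivial loops. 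So $0_i(\overline T)$ is always a trivial $t$-component unlink, and its Kauffman polynomial is $(-a^2-a^{-2})^{t-1}$. This is exactly the claimed formula; I would present the one genuinely needed bracket computation (for the representative surviving diagram) and then invoke the rotational symmetry to cover the rest, citing Figure~\ref{p10} as in Lemma~\ref{T8}.

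The one subtlety — and the step I expect to be the main obstacle — is making precise that ``$|T|=1$ forces $T$ to be, up to regular isotopy, one of the listed one-crossing $6$-plat diagrams,'' and that no genuinely knotted or linked configuration can hide in a rational $3$-tangle with a single crossing. This is really a statement that the only link diagrams with one crossing are the unknot-with-a-kink together with disjoint trivial loops, combined with the fact that closing up a rational $3$-tangle cannot introduce further crossings; both facts are elementary but must be stated carefully so the enumeration is exhaustive. Once that is pinned down, the bracket evaluation is a short, routine skein computation, and the writhe-normalized result is manifestly $(-a^2-a^{-2})^{t-1}$, independent of which $i$ and which surviving diagram one started from, because the underlying link is the $t$-component unlink in every case.
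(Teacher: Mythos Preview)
Your proposal is correct and follows essentially the same line as the paper: both arguments observe that any closure $0_i(\overline{T})$ is a one-crossing link diagram whose single crossing must be a self-crossing removable by a Reidemeister~I move (the paper rules out the ``linking'' pairing of the four strand-ends via Figure~\ref{p11}, which is exactly the obstruction you flag in your last paragraph), so the closure is a $t$-component unlink with Kauffman polynomial $(-a^2-a^{-2})^{t-1}$. Your skein-relation computation is an unnecessary detour once the link is identified as trivial, but it is harmless and leads to the same conclusion.
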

\begin{proof}
Consider the diagram in Figure~\ref{p11}.

\begin{figure}[htb]
\includegraphics[scale=.5]{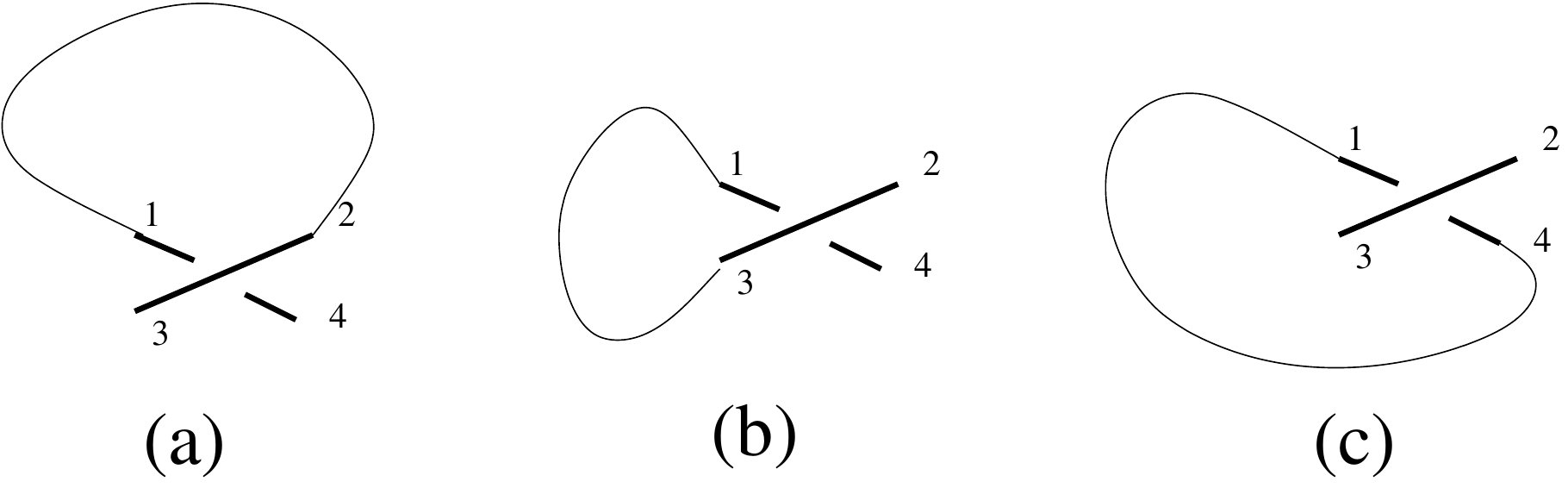}
\caption{}
\label{p11}
\end{figure}

Since $T$ has only one crossing, the cases $(a)$ and $(b)$ in Figure~\ref{p11} are possible. If we have a diagram $(c)$ of Figure~\ref{p11}, it is impossible to connect $2$ and $3$.
Also, the only crossing of $T$ should be removed by the first Reidemeister move. This completes the proof of Lemma 4.3.

\end{proof}

\begin{Lem}\label{T10}
Suppose that $T$ and $T'$ are reduced alternating rational $3$-tangle diagrams.\\
If $|T|=1$ and $|T'|\geq 2$, then $v_T\neq v_{T'}$.
\end{Lem}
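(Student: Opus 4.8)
The plan is to show that the invariant $v_T$ distinguishes a one-crossing reduced alternating rational $3$-tangle from any reduced alternating rational $3$-tangle with at least two crossings, by combining Lemma~\ref{T8}, Lemma~\ref{T9}, and Murasugi's Theorem~\ref{T4} applied to the closures. First I would analyze the case $|T|=1$ directly: by Lemma~\ref{T9}, every closure $0_i(\overline{T})$ has Kauffman polynomial $(-a^2-a^{-2})^{t-1}$, so in particular $M(X_{0_i(\overline{T})})-m(X_{0_i(\overline{T})})=4(t-1)$, which is $0$ when the closure is a connected diagram. More importantly, I would read off the actual coefficient vector $v_T$ from the two possible pictures $(a),(b)$ in Figure~\ref{p11}: a single crossing of type $\sigma_k^{\pm1}$ placed closest to $S^2$ resolves (via the skein axiom) into exactly two of the five trivial tangles $0_j$, so $v_T$ has the shape $(-a^{\mp3})(\text{one }0_j\text{ basis vector}) + a^{\pm1}(\text{another }0_j\text{ basis vector})$ — concretely a vector all of whose entries are monomials, with exactly two nonzero entries, one of them $\pm a^{\mp 3}$ and the other $\pm a^{\pm1}$. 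I would enumerate the finitely many such vectors (up to the ambient $(-a^{-3})^k$ ambiguity of Theorem~\ref{T7}), so they are completely explicit.

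Next I would treat $|T'|\ge 2$. By Lemma~\ref{T8}, there is an index $i$ for which $L':=0_i(\overline{T'})$ is a reduced alternating link diagram. I would argue it is also connected (a split closure would force a split or reducible tangle, contradicting $T'$ being reduced alternating with $|T'|\ge2$, handling the connectivity hypothesis of Theorem~\ref{T4} — this needs a short separate check). Hence by Theorem~\ref{T4}, $(M(X_{L'})-m(X_{L'}))/4 = |T'| \ge 2$, so the span $M(X_{L'})-m(X_{L'}) \ge 8$. On the other hand, the $i$-th coordinate extraction: if $v_{T'}=(-a^{-3})^k v_{T}$ for some $k$, then because closing up is a linear operation on the bracket (the closure $0_i(\overline{T})$ has bracket equal to a fixed $\Lambda$-linear combination $\sum_j c_{ij}\langle 0_j\text{-closed}\rangle$ of the closed-up trivial tangles, where each $\langle 0_j\text{ closed}\rangle$ is a power of $(-a^2-a^{-2})$), we would get $X_{0_i(\overline{T'})} = (-a^{-3})^{k'} X_{0_i(\overline{T})}$ for a suitable $k'$ after the writhe correction. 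Combined with Lemma~\ref{T9} for $T$, this forces $M(X_{0_i(\overline{T'})}) - m(X_{0_i(\overline{T'})}) = M(X_{0_i(\overline{T})}) - m(X_{0_i(\overline{T})}) = 4(t-1)$, and I would need $t$ — the component count of that particular closure — to be small enough (at most $2$, i.e.\ span at most $4 < 8$) to reach the contradiction. Checking the component count of $0_i(\overline{T})$ for the single-crossing $T$ against the number $8$ is the crux: since $T$ has one crossing and $0_i$ caps off six points with three arcs, $0_i(\overline{T})$ has at most, say, $3$ components generically, giving span at most $8$; I would need to rule out exact equality by using that $T'$ reduced alternating with $|T'|\ge 2$ can be taken so that $|T'|\ge 2$ strictly, or by a parity/linking refinement.

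\textbf{Main obstacle.} The delicate point is the bookkeeping of \emph{which} closure index $i$ is guaranteed reduced alternating by Lemma~\ref{T8} for $T'$, versus the component count of that same-index closure of the one-crossing tangle $T$ — these must be made to line up, and the inequality $4(t-1) < 4|T'| = M(X_{L'})-m(X_{L'})$ has to be strict. If $T'$ has exactly two crossings and the relevant closure of $T$ happens to have three components, one gets span $8$ on both sides, and the argument via Theorem~\ref{T4} alone does not separate them; in that borderline case I would fall back on the explicit monomial form of $v_T$ established in the first paragraph (each entry a single monomial, exactly two nonzero entries) against the fact that a two-crossing reduced alternating tangle's coefficient vector must have an entry that is a genuine binomial (non-monomial), e.g.\ via a computation modeled on Lemma~\ref{T3} with the matrices $A_1,A_2$, to conclude $v_{T'}$ cannot be $(-a^{-3})^k v_T$. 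Organizing the proof so that this explicit-vector fallback cleanly covers the finitely many borderline configurations, while Murasugi's span bound handles everything else, is where the real work lies.
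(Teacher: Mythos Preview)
Your overall strategy---compare spans of the Kauffman polynomial of suitable closures, using Lemma~\ref{T8} and Lemma~\ref{T9} on one side and Murasugi's Theorem~\ref{T4} on the other---is exactly the paper's approach. The difference is a single sharp observation you are missing, which makes the ``borderline case'' you worry about disappear entirely.

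The paper observes (via Figure~\ref{p11}) that for a reduced alternating rational $3$-tangle $T$ with $|T|=1$, \emph{every} closure $0_i(\overline{T})$ has at most \emph{two} components, not three. The reason is that the single crossing in a reduced diagram cannot be a self-crossing (that would be removable by a type~I move), so the tangle consists of two arcs crossing once together with one trivial arc; a short check of the five planar closures against each such configuration shows two components is the maximum. Hence by Lemma~\ref{T9} the span satisfies $M(X_{0_i(\overline{T})})-m(X_{0_i(\overline{T})})\le 4$ for all $i$. On the $T'$ side, Lemma~\ref{T8} gives a reduced alternating closure with at least two crossings, and Theorem~\ref{T4} (together with the third bracket axiom if the closure happens to be split, which only increases the span) yields span $\ge 8$. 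Since $v_T=v_{T'}$ forces equal brackets of closures and the span is unaffected by the writhe normalization, one gets $4\ge 8$, a contradiction. No fallback is needed, and the connectivity of $0_j(\overline{T'})$ is a non-issue.

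Two small corrections to your side computations: the skein resolution of a single crossing gives coefficients $a^{\pm1}$ and $a^{\mp1}$, not $-a^{\mp3}$ (the latter arises from a Reidemeister~I move, not from the smoothing axiom); compare the explicit list of vectors in Lemma~\ref{T11}. And your proposed fallback criterion (``some entry of $v_{T'}$ must be a genuine binomial'') is neither proved nor obviously true for all two-crossing cases, so as written it would leave a real gap---fortunately it is unnecessary once you have the $\le 2$-component bound.
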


\begin{proof}
First, we note that $0_i(\overline{T})$ has at most two trivial links (two split components) since $|T|=1$ for all $i$. (Refer Figure~\ref{p11}.) By Theorem~\ref{T4}  and the third axiom for the Kauffman bracket, we note that $M(X_{0_i(\overline{T})})-m(X_{0_i(\overline{T})})\leq 4$.\\

We also note that, by Lemma~\ref{T8}, there exists $0_j$-closure of $T'$ so that $0_j(\overline{T'})$ is a reduced alternating link diagram. Therefore, the minimal crossing number of $0_j(\overline{T'})$ for some $j$ is greater than or equal to $2$. This implies that $M(X_{0_j(\overline{T'})})-m(X_{0_j(\overline{T'})})\geq 8$.  However, if  $v_T= v_{T'}$, then $4\geq M(X_{0_j(\overline{T})})-m(X_{0_j(\overline{T})})=  M(X_{0_j(\overline{T'})})-m(X_{0_j(\overline{T'})})\geq 8$ since the writhes of $\overline{T}$ and $\overline{T'}$ cannot change $M(X_{0_j(\overline{T})})-m(X_{0_j(\overline{T})})$ and $M(X_{0_j(\overline{T'})})-m(X_{0_j(\overline{T'})})$. It makes a contradiction.\\

Therefore, $v_T\neq v_{T'}$.

\end{proof}

We remark that Eliahou and Kauffman~\cite{4} found an infinite number of $2$-component links with Kauffman polynomial equal to $-a^2-a^{-2}$. However, every element of the families is not (reduced) alternating by Theorem~\ref{T4}.

\begin{Lem}\label{T11}
Suppose that $\mathbb{T}$ and $\mathbb{T'}$ are rational $3$-tangles and
 $T$ and $T'$ are rational $3$-tangle diagrams of $\mathbb{T}$ and $\mathbb{T'}$ respectively. If $T$ and $T'$ are reduced alternating rational $3$-tangle diagrams  and $|T|\leq |T'|\leq 1$, then $\mathbb{T}\approx \mathbb{T}'$ if and only if $v_T=v_{T'}$.
\end{Lem}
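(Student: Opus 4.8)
The plan is to reduce Lemma~\ref{T11} to a small finite check, since the hypothesis $|T|\le|T'|\le 1$ leaves essentially only two cases: $|T|=|T'|=0$ and $|T|=|T'|=1$. (The mixed case $|T|=0$, $|T'|=1$ cannot arise under $v_T=v_{T'}$ by the same writhe/span argument as in Lemma~\ref{T10}, applied with the roles adjusted, or it can simply be disposed of directly.) For the forward direction, $\mathbb{T}\approx\mathbb{T}'$ implies $v_T=(-a^{-3})^k v_{T'}$ by Theorem~\ref{T7}; I would then observe that when $|T|=|T'|$ and both diagrams are reduced alternating with at most one crossing, the writhes of $T$ and $T'$ agree (a one-crossing reduced alternating $3$-tangle diagram has a determined crossing type up to the symmetry captured by the vector, and the zero-crossing case has writhe $0$), forcing $k=0$ and hence $v_T=v_{T'}$. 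The bulk of the work is the converse.

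For the converse with $|T|=|T'|=0$: a reduced alternating rational $3$-tangle diagram with no crossings is, up to isotopy, one of the five trivial tangles $0_1,\dots,0_5$ of Figure~\ref{p6} (these are exactly the rational $3$-tangles realizable with an empty braid word in some $6$-plat presentation $q_6(w)_i$). Their vectors are the five standard basis vectors $e_1,\dots,e_5$ of $\Lambda^5$, which are pairwise distinct; so $v_T=v_{T'}$ forces $T=T'=0_j$ for the same $j$, whence $\mathbb{T}\approx\mathbb{T}'$. For $|T|=|T'|=1$: by the analysis in the proof of Lemma~\ref{T9} (and Figure~\ref{p11}), a one-crossing reduced alternating rational $3$-tangle diagram is obtained from one of the trivial tangles by inserting a single $\sigma_j^{\pm1}$ at an admissible position; I would enumerate these finitely many diagrams, compute each $v_T$ directly from the skein relation $<T>=a\,<T_0>+a^{-1}<T_\infty>$ applied at the lone crossing together with the third axiom, and verify that distinct diagrams (up to isotopy of rational $3$-tangles) give distinct vectors. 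Concretely, resolving the single crossing expresses $v_T$ as $a\,e_p + a^{-1} e_q$ (or with a closed-loop factor $(-a^2-a^{-2})$ times a basis vector, when one resolution closes off a trivial circle), where $\{p,q\}$ records the two trivial tangles obtained by the two smoothings; the unordered pair $\{p,q\}$ (with multiplicity/loop data) is recoverable from $v_T$, and I would check it determines the isotopy class of $\mathbb{T}$.

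The main obstacle I anticipate is the bookkeeping in the one-crossing case: making the enumeration of admissible $\sigma_j^{\pm1}$-insertions genuinely exhaustive (using the $60^\circ$ rotational symmetry of the disk model, exactly as in Lemma~\ref{T8}, to cut the work down to $j=1,2$), and then checking that two such diagrams that yield the same pair $\{p,q\}$ of smoothings are actually isotopic as rational $3$-tangles rather than merely sharing an invariant — i.e., ruling out a coincidence of vectors across non-isotopic one-crossing tangles. I would handle this by noting that each admissible insertion lands in a distinct position relative to the six endpoints, so the smoothing data already pins down which two endpoints the crossing joins on each side, and a one-crossing rational $3$-tangle is determined by that combinatorial data together with the sign of the crossing (which is forced by the alternating condition once the ambient trivial tangle is fixed). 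Thus $v_T=v_{T'}$ with $|T|=|T'|=1$ yields $\mathbb{T}\approx\mathbb{T}'$, completing the proof; the remaining logical glue (dismissing $|T|\ne|T'|$ and assembling the cases) is routine given Lemma~\ref{T10} and Theorem~\ref{T7}.
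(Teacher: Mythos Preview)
Your approach is essentially the same as the paper's: both reduce the lemma to a finite enumeration of the reduced alternating $3$-tangle diagrams with at most one crossing and verify that their bracket vectors are pairwise distinct (the paper simply lists the five standard basis vectors for $|T|=0$ and the twelve vectors $a^{\pm 1}e_p+a^{\mp 1}e_q$ for $|T|=1$). Your additional worries---the mixed case $|T|=0,\ |T'|=1$, possible $(-a^2-a^{-2})$ factors, and pinning down $k=0$ in the forward direction---all evaporate once the explicit lists are written down, exactly as the paper does.
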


\begin{proof}
We note that  $v_{T}, v_{T'}\in\{(1,0,0,0,0),(0,1,0,0,0,),...,(0,0,0,0,1)\}$ if $|T|= |T'|=0$. They are distinguished by the trivial rational $3$-tangle type.\\

We also note that  $v_{T}, v_{T'}\in\{(a^{\pm 1},a^{\mp 1},0,0,0),(a^{\pm 1},0,0,a^{\mp 1},0),(a^{\pm 1},0,0,0,a^{\mp 1}),(0,a^{\pm 1},a^{\mp 1},$ $0,0),(0,0,a^{\pm 1},a^{\mp 1},0),(0,0,a^{\pm 1},0,a^{\mp 1})\}$ if $|T|= |T'|= 1$. They  are also distinguished by the rational $3$-tangle type with $|T|=|T'|=1.$\\

This completes the proof.

\end{proof}

\begin{Lem}\label{T12}
Suppose that $T$ and $T'$ are reduced alternating rational $3$-tangle diagrams with  $v_T=(-a^{-3})^kv_{T'}$ for some $k$ and $|T|\geq|T'|\geq 2$.
Then both $T$ and $T'$ are positive (negative) reduced alternating, where positive (negative) alternating diagram means that the crossings of the diagram are obtained only by $\sigma_1,\sigma_3,\sigma_5$, $\sigma_2^{-1},\sigma_4^{-1}$ or $\sigma_6^{-1}$ ($\sigma_1^{-1},\sigma_3^{-1},\sigma_5^{-1},\sigma_2,\sigma_4$ or $\sigma_6$).
\end{Lem}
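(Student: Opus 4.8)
The plan is to argue by contradiction: suppose one of the two diagrams, say $T$, is not positive or negative reduced alternating, i.e. its crossings cannot all be realized by the ``positive'' list $\{\sigma_1,\sigma_3,\sigma_5,\sigma_2^{-1},\sigma_4^{-1},\sigma_6^{-1}\}$ nor all by the ``negative'' list. Since $T$ is a reduced alternating rational $3$-tangle diagram with $|T|\geq 2$, Lemma~\ref{T8} gives an index $i$ for which $0_i(\overline{T})$ is a reduced alternating link diagram. The key observation to establish is that a reduced alternating diagram of a link whose \emph{tangle part below $S^2$} mixes signs of generators in the sense forbidden above cannot itself be an alternating closure — more precisely, I would show that if $T$ is reduced alternating and is neither positive nor negative, then every closure $0_i(\overline{T})$ either fails to be alternating or fails to be reduced, contradicting Lemma~\ref{T8}. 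This is essentially a local/checkerboard-coloring argument: alternation of the full closure forces a consistent choice of ``over/under'' pattern across $S^2$, which in the $6$-plat (or extended-generator) picture forces all crossings coming from odd-indexed $\sigma$'s to have one sign and all crossings from even-indexed $\sigma$'s to have the opposite sign, which is exactly the positive/negative dichotomy.

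Next I would invoke the hypothesis $v_T=(-a^{-3})^k v_{T'}$. From $v_T$ one recovers the Kauffman bracket of each closure: $<0_i(\overline{T})>$ is a fixed $\Lambda$-linear combination of the brackets of the five trivial closures $<0_j(\overline{0_i})>$, each of which is a power of $(-a^2-a^{-2})$. Hence $v_T=(-a^{-3})^k v_{T'}$ implies $X_{0_i(\overline{T})}=(-a^{-3})^{k+w_i} X_{0_i(\overline{T'})}$ for suitable writhe corrections $w_i$, and in particular $M(X_{0_i(\overline{T})})-m(X_{0_i(\overline{T})})=M(X_{0_i(\overline{T'})})-m(X_{0_i(\overline{T'})})$ for every $i$ — the span is a genuine invariant independent of the writhe normalization, exactly as used in the proof of Lemma~\ref{T10}. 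Combining this with Theorem~\ref{T4}: if $0_i(\overline{T})$ is reduced alternating its span equals $4$ times its crossing number, so the spans force the two diagrams to have closures of the same crossing number whenever one of them is a reduced alternating closure. The point is that the span data, together with Lemma~\ref{T8} applied to whichever of $T,T'$ I am analyzing, pins down enough to run the sign argument on \emph{both} diagrams simultaneously.

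The cleanest route, then, is: (1) prove the dichotomy statement ``a reduced alternating rational $3$-tangle diagram with $|T|\geq 2$ is positive or negative reduced alternating if and only if some $0_i(\overline{T})$ is a reduced alternating link diagram,'' which given Lemma~\ref{T8} shows each of $T$ and $T'$ is individually positive or negative; (2) rule out the mixed case where $T$ is positive and $T'$ is negative (or vice versa). For (2) I would use the sign of the writhe-corrected bracket: a positive reduced alternating diagram has all its closure brackets with a definite ``sign pattern'' of leading coefficients (up to the global $(-a^{-3})^k$ the leading and trailing coefficients of $X_{0_i(\overline{T})}$ are $\pm 1$ in a pattern determined by the parity of the number of positive crossings), and comparing these across $v_T=(-a^{-3})^k v_{T'}$ forces the same sign type. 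Concretely, looking at $0_i(\overline{T})$ for an $i$ giving a reduced alternating closure, the extreme coefficients of the Kauffman polynomial of a reduced alternating link are $\pm 1$, and the relation between the two extreme terms encodes whether the diagram is ``positive'' or ``negative'' alternating; this relation is preserved under multiplication by $(-a^{-3})^k$ only if the two diagrams share the sign type.

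\textbf{Main obstacle.} I expect step (1) — the precise claim that failure to be uniformly positive or negative obstructs \emph{every} closure $0_i(\overline{T})$ from being reduced alternating — to be the crux, because it requires a careful combinatorial analysis of how the $0_i$-closure arcs interact with the alternating pattern of a $6$-plat (and of the extended picture using $\sigma_6$), very much in the spirit of the lengthy case analysis already carried out in Lemma~\ref{T8}; one has to make sure no closure accidentally ``fixes'' a sign conflict. Step (2), distinguishing positive from negative, should be comparatively routine once the extreme-coefficient bookkeeping is set up, since it reduces to tracking a single parity/sign through the $A_i^{\pm 1}$-type transfer matrices and through the identity $v_T=(-a^{-3})^k v_{T'}$.
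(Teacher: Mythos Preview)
Your step~(1) is a red herring. The dichotomy ``every reduced alternating tangle diagram is either positive or negative'' is automatic from the checkerboard coloring and does not use Lemma~\ref{T8} or the hypothesis $v_T=(-a^{-3})^k v_{T'}$: in any alternating planar diagram the complementary regions admit a $2$-coloring, and the over/under data at every crossing is related to the two colors in one fixed way; that fixed way is precisely the positive/negative type. The paper takes this for granted and opens its proof directly with ``assume that $T$ is positive reduced alternating and $T'$ is negative reduced alternating.'' So the step you flag as the main obstacle is not an obstacle at all.

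Consequently the whole content of the lemma is your step~(2), which you call ``comparatively routine,'' and here your sketch has a genuine gap. The extreme coefficients of the bracket of a reduced alternating link are indeed $\pm 1$, but their signs are governed by the numbers of regions of each checkerboard color, not by the positive/negative type of the diagram; two positive alternating diagrams can perfectly well have different sign patterns, and multiplication by $(-a^{-3})^k$ just shifts degree and flips all signs together, so there is no single parity it must preserve or violate. Moreover, to even line up extreme terms you would need $|T|=|T'|$, and that (Lemma~\ref{T20}) is only proved \emph{after} the present lemma in the paper.

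The paper's argument is completely different and is the idea you are missing. Assuming $T$ positive and $T'$ negative, it stacks on top of both diagrams a cylinder $S$ of crossings chosen so that $S\cdot T'$ is isotopic to a trivial tangle; since the inverse of a negative word is a positive word, the crossings in $S$ are all from the positive list. Then $T_1'=S\cdot T'$ is trivial, while $T_1=S\cdot T$ is again positive alternating and (in the connected case) reduced with $|T_1|=|T|+|T'|\ge 4$. One still has $v_{T_1}=(-a^{-3})^\ell v_{T_1'}$, but now every closure of $T_1'$ is a disjoint union of at most three unknots, hence has span at most $8$, whereas Lemma~\ref{T8} and Theorem~\ref{T4} give a reduced alternating closure of $T_1$ with span at least $16$; contradiction. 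A brief ad hoc check disposes of the disconnected small cases $2\le|T|\le 3$. The stacking trick, which converts the hard comparison ``positive versus negative with equal vectors'' into the easy comparison ``large reduced alternating versus trivial,'' is the key device.
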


\begin{proof}

For a contradiction, assume that $T$ is positive  reduced alternating and $T'$ is negative  reduced alternating.
Then, consider the diagrams of Figure~\ref{p21}.\\

We have $T_1'$ by having crossings in the cylinder $S$ so that  $T_1'$ is isotopic to a trivial tangle diagram as in the second diagram of Figure~\ref{p21}. We note that the crossings in $S$ are obtained only by $\sigma_1,\sigma_3,\sigma_5$, $\sigma_2^{-1},\sigma_4^{-1}$ or $\sigma_6^{-1}$. \\

Then attach the same cylinder $S$ to $T$ to have $T_1$ as in Figure~\ref{p21}. We note that  $v_{T_1}=(-a^{-3})^kv_{T_1'}$ for some $k$, where $v_{T_1'}\in\{(1,0,0,0,0),(0,1,0,0,0,),...,(0,0,0,0,1)\}$.\\

\begin{figure}[htb]
\includegraphics[scale=.7]{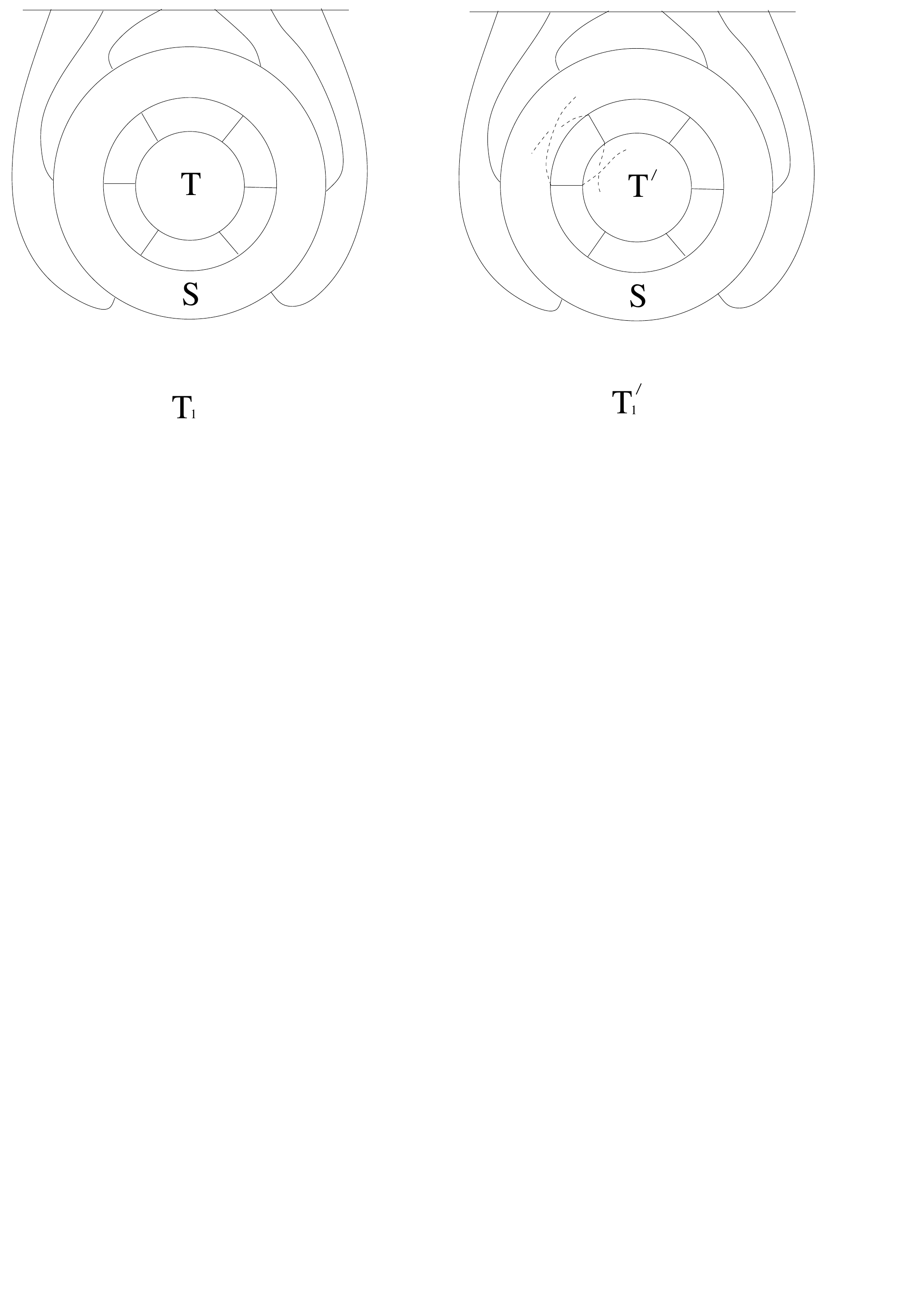}
\vskip -400pt
\caption{}
\label{p21}
\end{figure}

Clearly, the minimal crossing number of $T_1'$ is zero. If $T$ is a connected tangle diagram, then $|T_1|=|T|+|T'|\geq 4$ since  $T_1$ is a reduced alternating rational $3$-tangle diagram. So, $|T_1|\geq 4$.\\

We note that $0_i(\overline{T_1'})$ has at most three trivial components. So, $M(X_{0_i(\overline{T_1'})})-m(X_{0_i(\overline{T_1'})})\leq 8$ for all $i$.
By Lemma~\ref{T8}, there exists a $0_k$-closure of $T_1$ so that $0_k(\overline{T_1})$ is a reduced alternating link diagram with $|0_k(\overline{T_1})|\geq 4$. So, $M(X_{0_k(\overline{T_1})})-m(X_{0_k(\overline{T_1})})\geq 16$ for some $k$.\\

If $v_{T_1}= (-a^{-3})^lv_{T_1'}$ for some integer $l$, then we note that  $X_{0_k(\overline{T_1})}=(-a^{-3})^s X_{0_k(\overline{T_1'})}$ for some $s$ by considering the writhes of $T_1$ and $T_1'$. This implies that  $16\leq M(X_{0_k(\overline{T_1})})-m(X_{0_k(\overline{T_1})})=M(X_{0_k(\overline{T_1'})})-m(X_{0_k(\overline{T_1'})})\leq 8$. This makes a contradiction. Therefore, $v_{T_1}\neq (-a^{-3})^lv_{T_1'}$ for any integer $l$.\\

If $T$ is not a connected tangle diagram, then the only one possible case to have the condition $v_{T_1}= (-a^{-3})^lv_{T_1'}$ for some integer $l$ is when $2\leq |T'|\leq |T|\leq 3$ since if $|T|>3$ then $|T_1|\geq 4$. So, we note that $2\leq |T_1|\leq 3$. Then there exist $i$ so that $0_i(\overline{T_1})$ is a reduced alternating by Lemma~\ref{T8}. However, there is no such link $L$ so that $|L|=2$ or $3$ and $X_L=(-a^2-a^{-2})^{m}$ for some integer $m$. (Refer to  KnotInfo by Livingston/Cha.)  Therefore, $v_{T_1}\neq (-a^{-3})^lv_{T_1'}$ for any integer $l$.\\

Both cases contradict the assumption that $v_T=(-a^{-3})^lv_{T'}$ for some $l$ and this completes the proof.
\end{proof}

\begin{Lem}\label{T20}
Suppose that $T$ and $T'$ are reduced alternating rational $3$-tangle diagrams.\\
  If $v_T=(-a^{-3})^kv_{T'}$ for some $k$, then $|T|=|T'|$.
\end{Lem}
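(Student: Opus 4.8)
The plan is to convert the hypothesis into a statement about the span $M-m$ of the Kauffman polynomials of the numerator closures, in the spirit of the proofs of Lemmas~\ref{T10} and~\ref{T12}. The basic mechanism is this: if $v_T=(-a^{-3})^kv_{T'}$, then applying the (linear) closure operation $0_i$ to the two bracket expansions gives $<0_i(\overline{T})>=(-a^{-3})^k<0_i(\overline{T'})>$, so $X_{0_i(\overline{T})}$ and $X_{0_i(\overline{T'})}$ differ only by a unit $\pm a^{3j}$ of $\Lambda$, whence
\[
M\big(X_{0_i(\overline{T})}\big)-m\big(X_{0_i(\overline{T})}\big)=M\big(X_{0_i(\overline{T'})}\big)-m\big(X_{0_i(\overline{T'})}\big)\qquad (i=1,\dots ,5).
\]
Since the hypothesis is symmetric in $T$ and $T'$, I may assume $|T|\ge |T'|$ and, for a contradiction, $|T|>|T'|$.

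First I would clear away the cases $|T'|\le 1$. If $|T'|=0$, then $v_{T'}$ is a standard basis vector, so $v_T=(-a^{-3})^kv_{T'}$ has exactly one nonzero entry and it is a monomial; comparing with the explicit lists in Lemma~\ref{T11} rules out $|T|=1$, and for $|T|\ge 2$ Lemma~\ref{T8} yields a reduced alternating closure $0_{i_0}(\overline{T})$, whose span is $\ge 4|T|\ge 8$ by Theorem~\ref{T4}, while every closure of $T'$ is an unlink on at most three components and so has span $\le 8$; the borderline $|T|=2$ is excluded because $0_{i_0}(\overline{T})$ would then be the Hopf link, whose Kauffman polynomial is not a unit multiple of $(-a^2-a^{-2})^2$. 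If $|T'|=1$, then by Lemma~\ref{T11} both $v_{T'}$ and $v_T$ have exactly two nonzero entries, which rules out $|T|=0$, and for $|T|\ge 2$ the same span comparison against Lemma~\ref{T9} (every closure of a one-crossing tangle is an unlink on at most two components, span $\le 4$) contradicts $4|T|\le 4$. Hence $|T'|\le 1$ already forces $|T|=|T'|$, against $|T|>|T'|$.

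It remains to treat $|T|>|T'|\ge 2$. By Lemma~\ref{T12} both $T$ and $T'$ are positive (or both negative) reduced alternating, say positive. Choose $i_0$ with $0_{i_0}(\overline{T})$ reduced alternating (Lemma~\ref{T8}); this is an $|T|$-crossing reduced alternating diagram, so by Theorem~\ref{T4} its span equals $4|T|$ plus $4$ for each split unknotted summand, while $0_{i_0}(\overline{T'})$ is a diagram carrying only $|T'|$ crossings on at most three components, so by the Kauffman--Murasugi span inequality its span is at most $4|T'|+4(r-1)$ with $r\le 3$. The displayed equality then gives $4|T|\le 4|T'|+8$, i.e.\ $|T|\le |T'|+2$. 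The two surviving cases $|T|-|T'|\in\{1,2\}$ would force $0_{i_0}(\overline{T'})$ to split off $|T|-|T'|\ge 1$ unknotted components, so that $X_{0_{i_0}(\overline{T'})}$, and hence up to a unit $X_{0_{i_0}(\overline{T})}$, is divisible by a positive power of $(-a^2-a^{-2})$; this is impossible, since the Kauffman polynomial of a connected reduced alternating link has no internal zero coefficients while every positive power of $(-a^2-a^{-2})$ does---or, alternatively, one checks directly from KnotInfo, as in Lemma~\ref{T12}, that none of the finitely many relevant small reduced alternating links has such a Kauffman polynomial. This contradiction yields $|T|\le |T'|$, hence $|T|=|T'|$.

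The step I expect to be the main obstacle is the last paragraph: keeping the bookkeeping of split unknotted summands of $0_{i_0}(\overline{T})$ and $0_{i_0}(\overline{T'})$ tight enough to force $4|T|=$ span, and then cleanly eliminating the two borderline differences $|T|-|T'|\in\{1,2\}$---precisely the point where one must invoke either the absence of internal zero coefficients in Kauffman polynomials of reduced alternating links or a finite explicit check, as Lemma~\ref{T12} already does.
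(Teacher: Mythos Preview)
Your overall framework matches the paper's: both reduce to comparing the span $M-m$ of $X_{0_i(\overline T)}$ and $X_{0_i(\overline{T'})}$ across the five closures, dispose of $|T'|\le 1$ via Lemmas~\ref{T10} and~\ref{T11}, and invoke Lemma~\ref{T8} together with Murasugi's theorem to bound $|T|-|T'|$.

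The divergence, and the genuine gap, is in your endgame. Your elimination of $|T|-|T'|\in\{1,2\}$ rests on the assertion that ``the Kauffman polynomial of a connected reduced alternating link has no internal zero coefficients.'' This is false: already the trefoil has $V(t)=-t^{-4}+t^{-3}+t^{-1}$, with a zero coefficient at $t^{-2}$, and the Hopf link has $V(t)=-t^{-5/2}-t^{-1/2}$. Thistlethwaite's theorem says the coefficients alternate in sign, which permits zeros. Your parallel claim that every positive power of $(-a^2-a^{-2})$ has internal zeros also fails in the relevant grading: $(-a^2-a^{-2})^2=a^4+2+a^{-4}$. And your fallback ``finite check from KnotInfo'' is not finite here, since $|T|$ is unbounded.

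There \emph{is} a correct substitute for your divisibility step: a non-split alternating link has nonzero determinant (it counts spanning trees of the Tait graph), hence its $X$ is not divisible by $(-a^2-a^{-2})$, while any split link's $X$ is. But to use this you must know that the closure $0_{i_0}(\overline T)$ produced by Lemma~\ref{T8} is a \emph{connected} link diagram, i.e.\ that $T$ itself is a connected tangle diagram---and you never argue this. The paper confronts exactly this point: it first shows that span equality forces the specific configuration $|T|=|T'|+1$, $T$ connected, $T'$ with one separated arc, and then carries out a short diagrammatic case analysis (producing a second closure $0_j$ for which $0_j(\overline T)$ remains reduced alternating while $0_j(\overline{T'})$ becomes connected) to derive the final contradiction. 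That second-closure case analysis, not a one-line divisibility trick, is the ingredient your proposal is missing.
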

\begin{proof}
Suppose that $|T|\geq|T'|\geq 2$. The other cases are clear by Lemma~\ref{T10} and Lemma~\ref{T11}. 
Assume that $|T|>|T'|$ for a contradiction. We note $X_{0_i(\overline{T})}= 
(-a^{-3})^kX_{0_i(\overline{T'})}$ for all $i$ and some $k$. Otherwise $v_T\neq(-a^{-3})^kv_{T'}$. We also know that there exists $j$ so that  $0_j(\overline{T})$ is a reduced alternating link diagram by Lemma~\ref{T8}.\\

 In order to have $X_{0_j(\overline{T})}= 
(-a^{-3})^kX_{0_j(\overline{T'})}$, we should have three conditions $(1)$ $|T|=|T'|+1$, $(2)$ $T$ is a connected tangle diagram and $(3)$ $T'$ has one separated arc. 
We note that $T'$ cannot have three separated trivial arcs since $|T'|\geq 2$. Also, $T'$ cannot be a connected tangle diagram. If $T'$ is a connected tangle diagram then 
we also have $M(X_{0_j(\overline{T})})-m(X_{0_j(\overline{T})})>M(X_{0_j(\overline{T'})})-m(X_{0_j(\overline{T'})})$ for some $j$ since $|T|>|T'|$. (Refer to Theorem $2$ of \cite{11}.)
This  implies that $X_{0_j(\overline{T})}\neq 
(-a^{-3})^kX_{0_j(\overline{T'})}$ for some $j$. So, $T'$ has one separated arc.
Then we note that $|T|=|T'|+1$. If $|T|>|T'|+1$ then we also have $M(X_{0_j(\overline{T})})-m(X_{0_j(\overline{T})})>M(X_{0_j(\overline{T'})})-m(X_{0_j(\overline{T'})})$ for some $j$ and it makes a contradiction.
At last, if $T$ is not a connected tangle diagram then there exist $j$ so that $0_j(\overline{T})$ is a reduced alternating link having two separated links. Then we conclude that $M(X_{0_j(\overline{T})})-m(X_{0_j(\overline{T})})>M(X_{0_j(\overline{T'})})-m(X_{0_j(\overline{T'})})$ for some $j$ since $|T|>|T'|$. Then we have $X_{0_j(\overline{T})}\neq
(-a^{-3})^kX_{0_j(\overline{T'})}$.

\begin{figure}[htb]
\includegraphics[scale=.4]{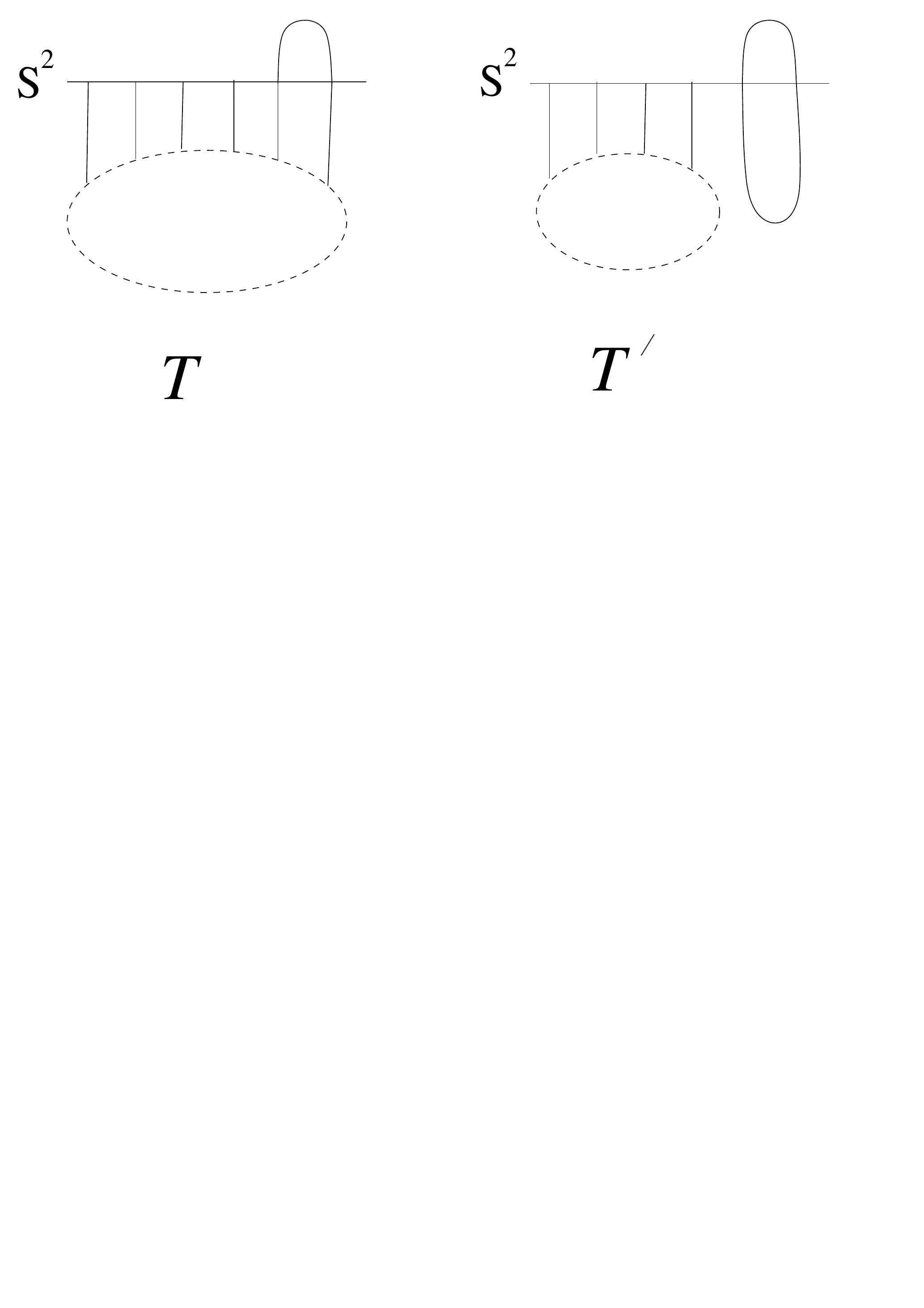}
\vskip -225pt
\caption{}
\label{p26}
\end{figure}

Now, we take $T$ and $T'$ so that $|T|+|T'|$ is minimal.
Now, consider the diagrams of Figure~\ref{p26}. The rest of cases are obtained from a multiple of $60^\circ$ rotation in the disk model which  changes the positions of the endpoints.

\begin{figure}[htb]
\includegraphics[scale=.7]{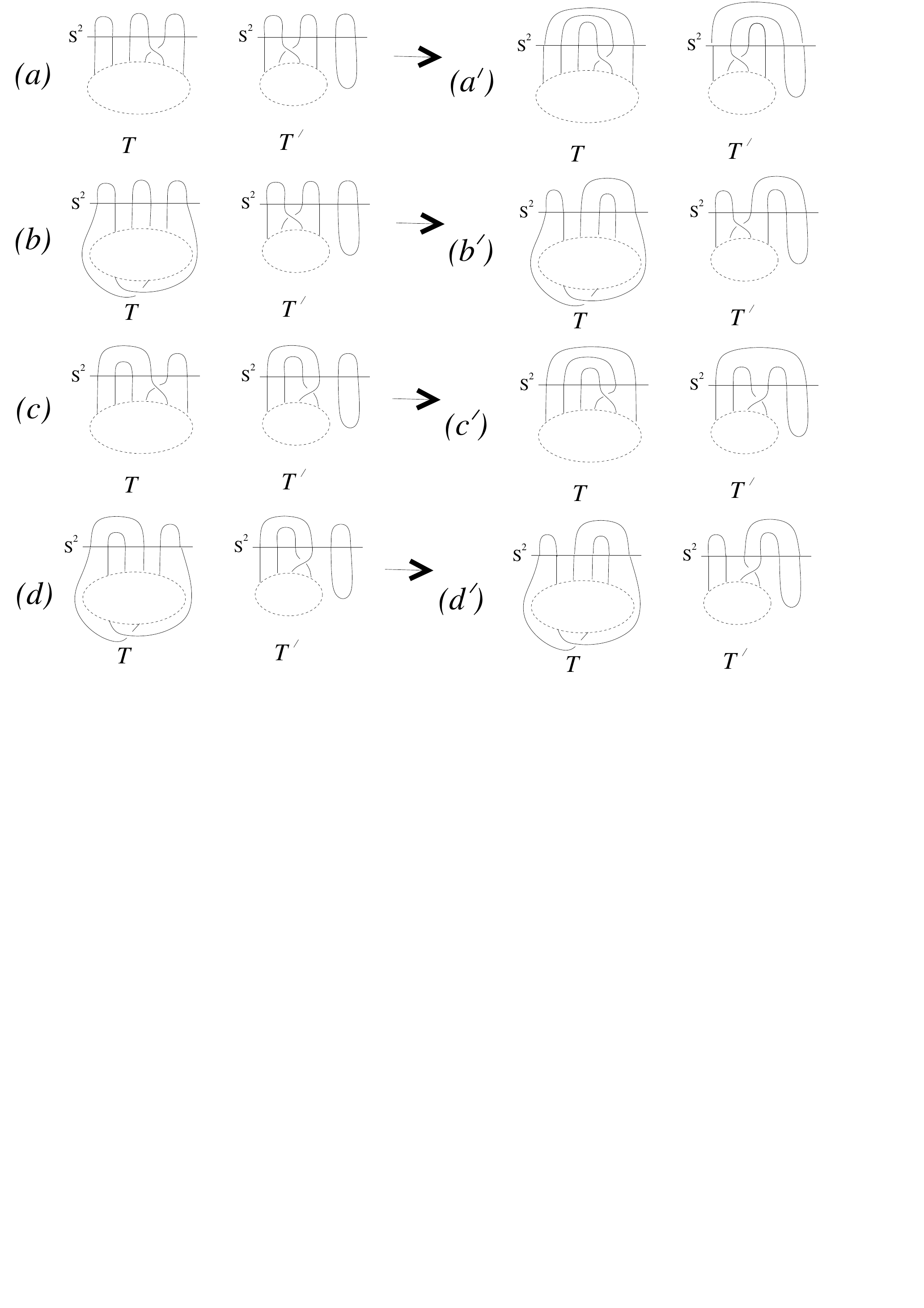}
\vskip -285pt
\caption{}
\label{p27}
\end{figure}

For the $0_j$-closure of $T$ to have a connected reduced alternating link, one of the connectivity is fixed as in the diagrams of Figure~\ref{p26}. Otherwise, $0_j(\overline{T'})$ is also  a connected link. This implies that $M(X_{0_j(\overline{T})})-m(X_{0_j(\overline{T})})>M(X_{0_j(\overline{T'})})-m(X_{0_j(\overline{T'})})$ for some $j$ since $|T|=|T'|+1$. So, it makes a contradiction.\\

Then we have only four possible cases $(a)-(d)$ for this as in Figure~\ref{p27}. Otherwise, either $0_j(\overline{T})$ is not reduced alternating or $|T|+|T'|$ is not minimal.  We also note that both $0_j(\overline{T})$ and $0_j(\overline{T})$ are reduced alternating.   The cases $(a)$ and $(c)$ assume that the given crossing in $T$ is a closest crossing to $S^2$. and the cases $(b)$ and $(d)$ assume that the given crossing in $T$ is the only one closest crossing to $S^2$.\\

With a similar argument in Lemma~\ref{T8}, we can show that the closure of $T$ in diagrams $(a')-(d')$ is also a reduced alternating link. Also, we see that the same closure of $T'$ in diagrams $(a')-(d')$ is now a connected link. By using the condition that $|T|=|T'|+1$, we have $M(X_{0_j(\overline{T})})-m(X_{0_j(\overline{T})})>M(X_{0_j(\overline{T'})})-m(X_{0_j(\overline{T'})})$ for some $j$. Therefore, 
$X_{0_j(\overline{T})}\neq
(-a^{-3})^kX_{0_j(\overline{T'})}$. This makes a contradiction.\\

So, it is impossible to have $T$ and $T'$ to satisfy the given three conditions.\\

 Therefore, we conclude that $|T|=|T'|$.
\end{proof}

\begin{figure}[htb]
\includegraphics[scale=.7]{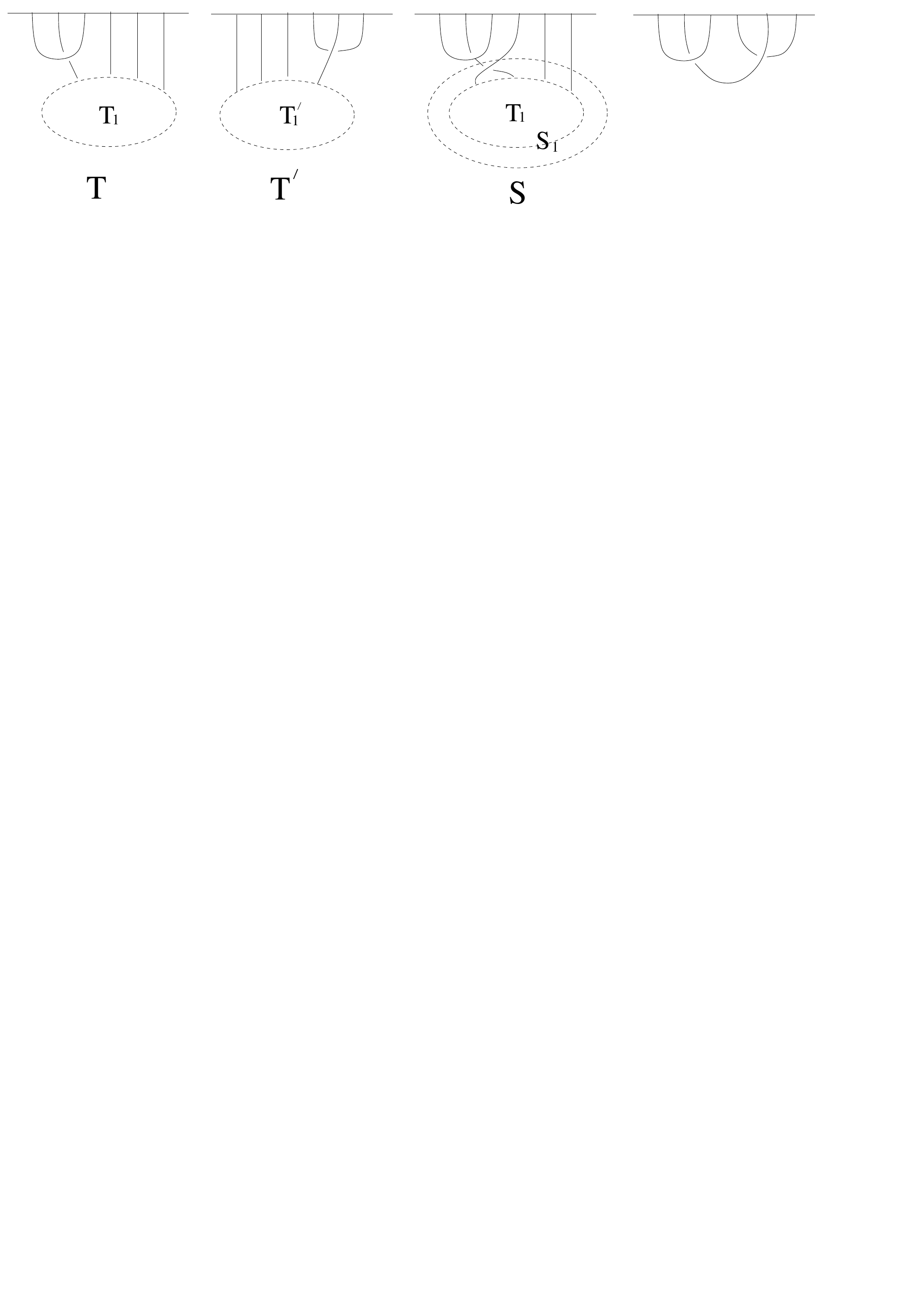}
\vskip -475pt
\caption{}
\label{p22}
\end{figure}

\begin{Lem}\label{T21}
Suppose that $\mathbb{T}$ and $\mathbb{T}'$ are rational $3$-tangles with reduced alternating rational $3$-tangle diagrams $T$ and $T'$  as in Figure~\ref{p22}, where $T_1$ and $T_1'$ are rational $2$-tangle diagrams.
If $v_T=(-a^{-3})^kv_{T'}$ for some $k$, then $\mathbb{T}\approx\mathbb{T}'$.
\end{Lem}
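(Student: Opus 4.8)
The plan is to reduce the statement to the rational $2$-tangle classification already established in Corollary~\ref{T6}. The essential point is that for a rational $3$-tangle diagram $T$ of the shape shown in Figure~\ref{p22} — a rational $2$-tangle diagram $T_1$ inserted into an otherwise fixed trivial configuration, with the third strand running along a fixed trivial arc — the bracket expansion of $T$ into the five trivial $3$-tangle diagrams $0_1,\dots,0_5$ can be performed by first expanding $T_1$ into the two trivial $2$-tangle diagrams $T_0$ and $T_\infty$. Concretely, writing $<T_1>=f(a)\mathcal{A}+g(a)\mathcal{B}$ as in Theorem~\ref{T2}, one substitutes $T_0$ and $T_\infty$ back into the surrounding configuration and obtains two specific trivial $3$-tangle diagrams, so that $v_T=f(a)\,u_0+g(a)\,u_\infty$ for two explicit vectors $u_0,u_\infty$ (each a standard basis vector of $\Lambda^5$, possibly multiplied by a power of $-a^2-a^{-2}$ coming from a closed circle created by the substitution). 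Since $T$ and $T'$ differ only in the inserted $2$-tangle box, the same identity holds for $T'$ with $(f'(a),g'(a))$ in place of $(f(a),g(a))$ and the \emph{same} vectors $u_0,u_\infty$.

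First I would read off $u_0$ and $u_\infty$ explicitly from Figures~\ref{p22} and~\ref{p6} and check that, in the natural picture, the two substitutions land in distinct trivial $3$-tangles with no stray circles created, so that $u_0$ and $u_\infty$ are distinct standard basis vectors and in particular $\Lambda$-linearly independent. The map $(f,g)\mapsto fu_0+gu_\infty$ is then injective. From the hypothesis $v_T=(-a^{-3})^k v_{T'}$ we get $f(a)u_0+g(a)u_\infty=(-a^{-3})^k\bigl(f'(a)u_0+g'(a)u_\infty\bigr)$, and comparing the coordinates in the span of $u_0$ and $u_\infty$ forces $(f(a),g(a))=(-a^{-3})^k(f'(a),g'(a))$. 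By Corollary~\ref{T6} this yields $\mathbb{T}_1\approx\mathbb{T}_1'$, i.e. an orientation-preserving self-homeomorphism of the $2$-tangle ball carrying $\mathbb{T}_1$ to $\mathbb{T}_1'$ and restricting to the identity on its boundary.

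Finally I would promote this $2$-tangle isotopy to a $3$-tangle isotopy: because $\mathbb{T}$ and $\mathbb{T}'$ are obtained from $\mathbb{T}_1$ and $\mathbb{T}_1'$ by the identical "plugging in" procedure (the surrounding trivial configuration, including the third strand, is the same for both), the self-homeomorphism realizing $\mathbb{T}_1\approx\mathbb{T}_1'$ extends by the identity on the complementary region of $B^3$ to an orientation-preserving self-homeomorphism of $B^3$ that fixes $\partial B^3$ and carries $\mathbb{T}$ to $\mathbb{T}'$; hence $\mathbb{T}\approx\mathbb{T}'$. I expect the main obstacle to be the bookkeeping in the first two steps: one must verify that the bracket expansion genuinely factors through the $2$-tangle box (no crossing involving the third strand interferes), pin down $u_0$ and $u_\infty$ together with any $(-a^2-a^{-2})$ loop factors, and confirm that the scalar $(-a^{-3})^k$ transfers cleanly from the $3$-tangle vectors back to the $2$-tangle vectors. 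The geometric extension step is routine once this algebra is in place.
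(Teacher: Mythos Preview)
Your plan rests on a misreading of Figure~\ref{p22}: you assume that $T$ and $T'$ share the \emph{same} surrounding configuration, so that the expansion vectors $u_0,u_\infty$ coincide for both. In fact they do not. In the paper's setup each of $T$ and $T'$ carries one extra crossing outside the $2$-tangle box, placed differently; this is why the paper writes $\langle T\rangle=f(a)A+g(a)B$ but $\langle T'\rangle=f'(a)C+g'(a)D$ with $A=a\langle 0_5\rangle+a^{-1}\langle 0_1\rangle$, $B=a\langle 0_3\rangle+a^{-1}\langle 0_2\rangle$ while $C=a^{-1}\langle 0_1\rangle+a\langle 0_2\rangle$, $D=a^{-1}\langle 0_5\rangle+a\langle 0_3\rangle$. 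Your injectivity step (``compare coordinates in the span of $u_0,u_\infty$'') therefore does not apply, and even if you recover $\mathbb{T}_1\approx\mathbb{T}_1'$ your final extension step fails outright, since extending by the identity on the complement only makes sense when the complements agree.

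What actually happens when one compares coordinates is subtler: matching the $\langle 0_1\rangle$ and $\langle 0_2\rangle$ coefficients forces the nontrivial relation $g(a)=a^{2}f(a)$ (and similarly for $T'$), in addition to $(f,g)=(f',g')$. The paper exploits this relation by adjoining one more crossing to $T_1$ to form a $2$-tangle $S_1$ with bracket vector $(af(a),0)$; a determinant computation in the matrices of Theorem~\ref{T2} then shows $f(a)$ is a monomial, so $|S_1|=0$ and hence $T_1$ has exactly one crossing. Thus both $T$ and $T'$ collapse to the \emph{same} explicit diagram, which is how $\mathbb{T}\approx\mathbb{T}'$ is obtained. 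The missing idea in your proposal is precisely this rigidity step: without it there is no way to bridge the two different ambient configurations.
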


\begin{proof}
Let $<T_1>=f(a)<T_0>+g(a)<T_{\infty}>$ and $<T_1'>=f'(a)<T_0>+g'(a)<T_{\infty}>$.
Then for $A,B,C$ and $D$ of Figure~\ref{p23}, we have the relations $<T>=f(a)A+g(a)B$ and $<T'>=f'(a)C+g'(a)D$.\\

\begin{figure}[htb]
\includegraphics[scale=.5]{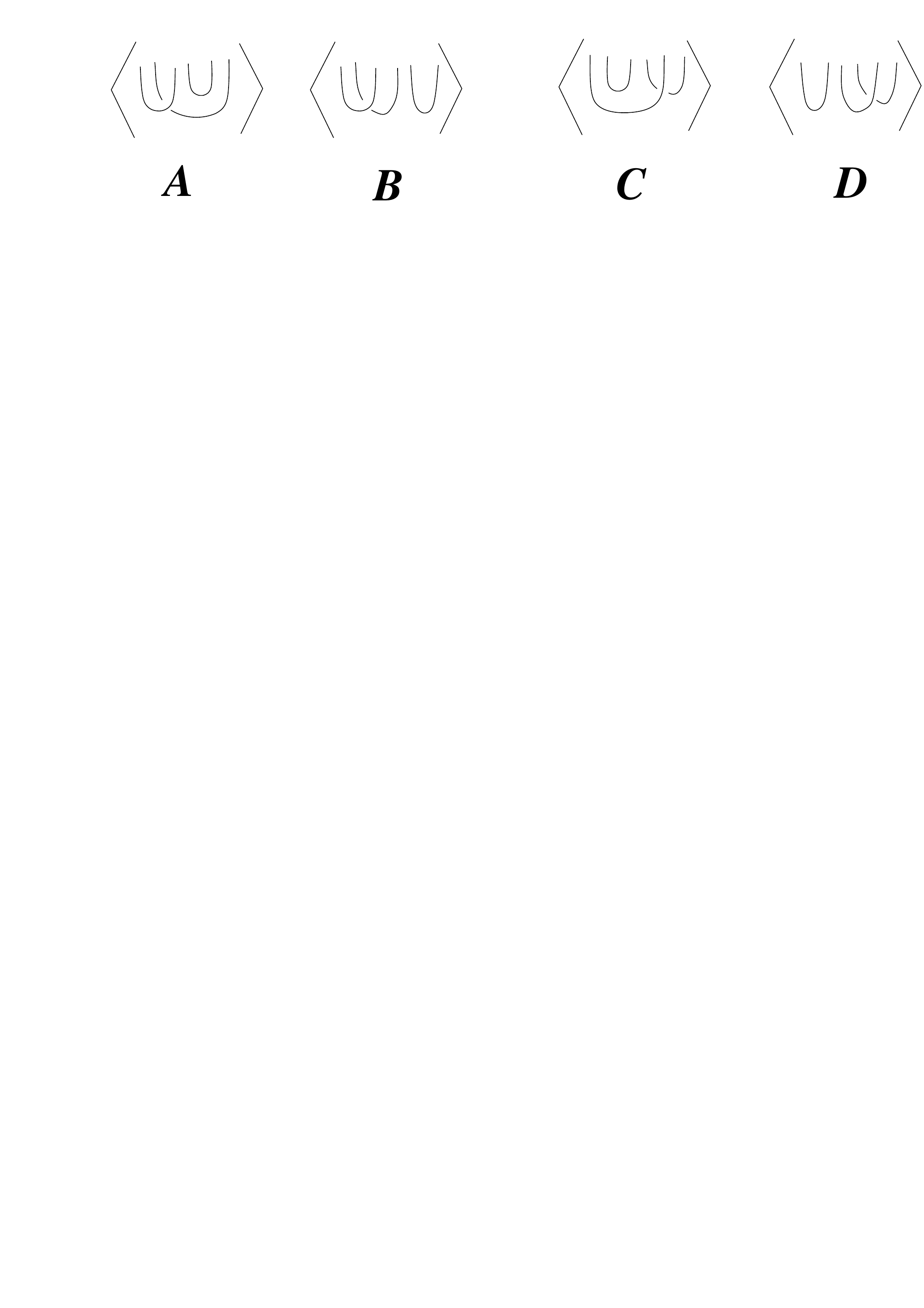}
\vskip -350pt
\caption{}
\label{p23}
\end{figure}

We also note that $A=a<0_5>+a^{-1}<0_1>, B=a<0_3>+a^{-1}<0_2>, C=a^{-1}<0_1>+a<0_2>$ and $ D=a^{-1}<0_5>+a<0_3>$.\\

Therefore, $<T>=f(a)A+g(a)B=f(a)(a<0_5>+a^{-1}<0_1>)+g(a)(a<0_3>+a^{-1}<0_2>)=a^{-1}f(a)<0_1>+a^{-1}g(a)<0_2>+ag(a)<0_3>+af(a)<0_5>$ and
$<T'>=f'(a)C+g'(a)D=f'(a)(a^{-1}<0_1>+a<0_2>)+g'(a)(a^{-1}<0_5>+a<0_3>)=a^{-1}f'(a)<0_1>+af'(a)<0_2>+ag'(a)<0_3>+a^{-1}g'(a)<0_5>$.\\

This implies that $g(a)=a^{2}f(a)$ and $(f(a),g(a))=(f'(a),g'(a))$. Actually, we will conclude that $T_1=T_1'$ which has only one crossing.\\

Now consider the rational $3$-tangle diagram $S$ which contains the rational $2$-tangle diagram $S_1$ as the third diagram of Figure~\ref{p22}, where the inner circle is for $T_1$ and the outer circle is for $S_1$.\\

We have $<S_1>=f(a)(a<T_{0}>+a^{-1}<T_{\infty}>+g(a)(-a^{-3})<T_{\infty}>=af(a)<T_0>+(a^{-1}f(a)-a^{-3}g(a))<T_{\infty}>=af(a)<T_{0}>$ since $f(a)=a^2g(a)$.\\

We note that the Kauffman bracket vector of $S_1$ is $(af(a),0)$.\\

In Theorem~\ref{T2}, if we consider the determinant of the matrix $A_{S_1}$ then we have $af(a)h(a)=(a^2)^l$ for some integer $l$ and a Laurent polynomial $h(a)$, where $A_{S_1}$ is the matrices product for calculating the bracket vector of $S_1$. This implies that $f(a)=a^n$ for some integer $n$. So, the Kauffman bracket vector of $S_1$ is $(a^{n+1},0)$. By a similar argument as in Theorem~\ref{T5}, we conclude that $|S_1|=0$.
So, both cases have the same diagram as the last diagram of Figure~\ref{p22}.\\

 This completes the proof.
\end{proof}

Now, we want to prove another direction of the main theorem.
\begin{Thm}\label{T13}
Suppose that $\mathbb{T}$ and $\mathbb{T}'$ are rational $3$-tangles with reduced alternating rational $3$-tangle diagrams $T$ and $T'$  respectively.  If $v_T=(-a^{-3})^kv_{T'}$ for some $k$, then $\mathbb{T}\approx \mathbb{T}'$.
\end{Thm}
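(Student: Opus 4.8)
The strategy is induction on the minimal crossing number $|T|$ of the tangle diagram, using the previously established lemmas as the machinery for the inductive step. By Lemma~\ref{T20}, the hypothesis $v_T=(-a^{-3})^kv_{T'}$ already forces $|T|=|T'|$, so there is a common value $n=|T|=|T'|$ to induct on. The base cases $n\le 1$ are handled by Lemma~\ref{T11}. Lemma~\ref{T12} tells us that, when $n\ge 2$, both $T$ and $T'$ are simultaneously positive (or simultaneously negative) reduced alternating; after possibly replacing both by their mirror images we may assume both are positive. So the real content is the inductive step for $n\ge 2$ with $T,T'$ positive reduced alternating and $v_T=(-a^{-3})^kv_{T'}$.

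First I would normalize the diagrams to $6$-plat form. Since every rational $3$-tangle is a $6$-plat tangle (with the extended generator $\sigma_6$ allowed), write $T=q_6(w)_i$ and $T'=q_6(w')_{i'}$ as reduced alternating $6$-plat presentations. The plan is to locate a crossing of $T$ that is \emph{closest to} $S^2$ in the sense defined before Lemma~\ref{T8} — say it is realized by $\sigma_j^{\pm1}$ — and show that $T'$ must, after isotopy, also have a matching closest crossing realized by the same generator with the same sign. This is where Lemma~\ref{T8} does the heavy lifting: for a suitable index $\ell$, $0_\ell(\overline T)$ is a reduced alternating link diagram, hence by Theorem~\ref{T4} its span $M(X)-m(X)$ computes $4|0_\ell(\overline T)|$; combining this with $v_T=(-a^{-3})^kv_{T'}$ (which by closing up gives $X_{0_\ell(\overline T)}=(-a^{-3})^sX_{0_\ell(\overline{T'})}$ for a writhe-correction $s$) pins down the local structure of $T'$ near $S^2$. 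The case analysis on which generator produces the closest crossing should mirror, essentially verbatim, the rotation-by-$60^\circ$ reductions already carried out in the proof of Lemma~\ref{T8}, so I would only spell out the $\sigma_1$ case and invoke symmetry for the rest.

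Once a common outermost crossing is identified, I would peel it off: write $T$ as the composite of that single crossing $\sigma_j^{\pm1}$ (a piece adjacent to $S^2$) with a smaller reduced alternating rational $3$-tangle diagram $\widetilde T$, and likewise $T'=\sigma_j^{\pm1}\cdot\widetilde{T'}$, with $|\widetilde T|=|\widetilde{T'}|=n-1$. The bracket of a crossing is linear in $T_0,T_\infty$, so the $5$-vector $v_T$ is obtained from $v_{\widetilde T}$ by a fixed $5\times5$ matrix $M_{j}^{\pm1}$ (the $3$-strand analogue of the matrices $A_1^{\pm1},A_2^{\pm1}$ of Theorem~\ref{T2}), extended by $\sigma_6$ as needed via Figure~\ref{p20}. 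The key algebraic point is that each such $M_j^{\pm1}$ is invertible over $\mathbb{Z}[a,a^{-1},(1+a^4)^{-1}]$ with determinant a unit times a power of $a$ — this is the higher analogue of Lemma~\ref{T3} — so $v_T=(-a^{-3})^kv_{T'}$ together with $v_T=M_j^{\pm1}v_{\widetilde T}$, $v_{T'}=M_j^{\pm1}v_{\widetilde{T'}}$ yields $v_{\widetilde T}=(-a^{-3})^kv_{\widetilde{T'}}$. The induction hypothesis then gives $\widetilde{\mathbb T}\approx\widetilde{\mathbb T'}$, and capping both with the same crossing $\sigma_j^{\pm1}$ gives $\mathbb T\approx\mathbb T'$. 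The configurations in which the outermost crossing cannot be cleanly separated — essentially the ``mixed connectivity'' situations — are exactly those isolated in Lemma~\ref{T21}, which disposes of them directly; I would invoke Lemma~\ref{T21} at that point rather than reprove it.

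The main obstacle, and the step I expect to be most delicate, is establishing that $T'$ genuinely shares the outermost-crossing structure of $T$ — i.e., upgrading the numerical coincidence $X_{0_\ell(\overline T)}=(-a^{-3})^sX_{0_\ell(\overline{T'})}$ for all $\ell$ into a \emph{combinatorial} matching of the two $6$-plat presentations near $S^2$. Murasugi's span theorem only controls the crossing number of the closures, not the tangle itself, so one must argue that two reduced alternating rational $3$-tangle diagrams whose five closures all have equal span (after writhe correction) and equal bracket vectors cannot differ in their outermost crossing; this requires the full force of the closest-crossing analysis of Lemma~\ref{T8}, applied not just to detect reduced alternating closures but to read off the local picture. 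A secondary technical point is the bookkeeping of writhe corrections: passing between $v_T$ and the honest Jones polynomials $X_{0_\ell(\overline T)}$ introduces powers of $(-a^{-3})$ that depend on $\ell$ and on the chosen orientations, and one must check these never conspire to hide a genuine difference in span — but this is routine once the crossing-peeling matrices $M_j^{\pm1}$ are written down explicitly.
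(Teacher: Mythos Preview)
Your approach is essentially the paper's: argue by minimal counterexample (equivalently, induct on $|T|=|T'|$, which is well-defined by Lemma~\ref{T20}); use Lemmas~\ref{T10}--\ref{T12} to reduce to the positive reduced alternating case with $|T|=|T'|\ge 2$; then show via a closest-crossing case analysis, comparing which closures $0_\ell(\overline T)$ and $0_\ell(\overline{T'})$ are reduced alternating, that $T$ and $T'$ must share a closest crossing, which is then peeled off (the paper, like you, implicitly uses invertibility of the $5\times 5$ bracket matrices here). Two small corrections to your expectations: the case analysis is considerably longer than that of Lemma~\ref{T8} --- the paper runs through four main cases (closest crossings of $T$ by $\sigma_1$ alone; by $\sigma_1,\sigma_3$; by $\sigma_1,\sigma_3,\sigma_5$; by $\sigma_1,\sigma_4$) each with several sub-cases for $T'$, and some of these sub-cases terminate not in a contradiction but in a reduction to rational $2$-tangles handled by Corollary~\ref{T6} (when one strand of the tangle turns out to be crossing-free), a route you do not mention; and Lemma~\ref{T21} disposes of exactly one specific sub-configuration rather than a general class of ``mixed connectivity'' situations.
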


\begin{proof}
Suppose that there exists  reduced alternating rational $3$-tangle diagrams $T$ and $T'$ so that $v_T=(-a^{-3})^kv_{T'}$ for some $k$ but $\mathbb{T}\not\approx \mathbb{T}'$.\\

 Then we choose a pair of $T$ and $T'$ satisfying the previous condition and  $|T|=|T'|$ is minimal.\\

 We note that both $T$ and $T'$ are positive (negative) reduced alternating by Lemma~\ref{T12}.\\

Also, we note that the closest crossings of $T$ and $T'$ to $S^2$ are obtained by different $\sigma_i$. Otherwise, by removing the common closest crossings of $T$ and $T'$, we can get $T_1$ and $T_1'$. Then we note that $v_{T_1}=(-a^{-3})^kv_{T_1'}$. So, it contradicts the assumption that $|T|=|T'|$ is minimal.\\

By Lemma~\ref{T10} and Lemma~\ref{T11}, if there exists a such example then  $|T|=|T'|\geq 2$.
\\

If any of $T$ and $T'$ is not a connected tangle, possibly $T$, then take $0_j$-closure of $T$ to have a reduced alternating link for some $j$ which has a separated trivial knot. Then we note that $0_j(\overline{T'})$ is also a reduced alternating link which has a separated trivial knot. Otherwise, we have $M(X_{0_j(\overline{T})})-m(X_{0_j(\overline{T})})>M(X_{0_j(\overline{T'})})-m(X_{0_j(\overline{T'})})$. It contradicts the condition that  $v_T=(-a^{-3})^kv_{T'}$ for some $k$
since $X_{0_j(\overline{T})}\neq
(-a^{-3})^kX_{0_j(\overline{T'})}$. So, it is enough to consider rational 2-tangles by ignoring the common separated trivial arc of $T$ and $T'$. Then by using Corollary~\ref{T6} we can show that if $v_T=(-a^{-3})^kv_{T'}$ for some $k$, then $\mathbb{T}\approx \mathbb{T}'$. \\

Now, we start to consider the possible cases of $T$.\\

Case 1: Assume that a crossing of $T$ by $\sigma_1^{\pm 1}$ is the only one closest crossing to $S^2$. \\
\begin{figure}[htb]
\includegraphics[scale=.85]{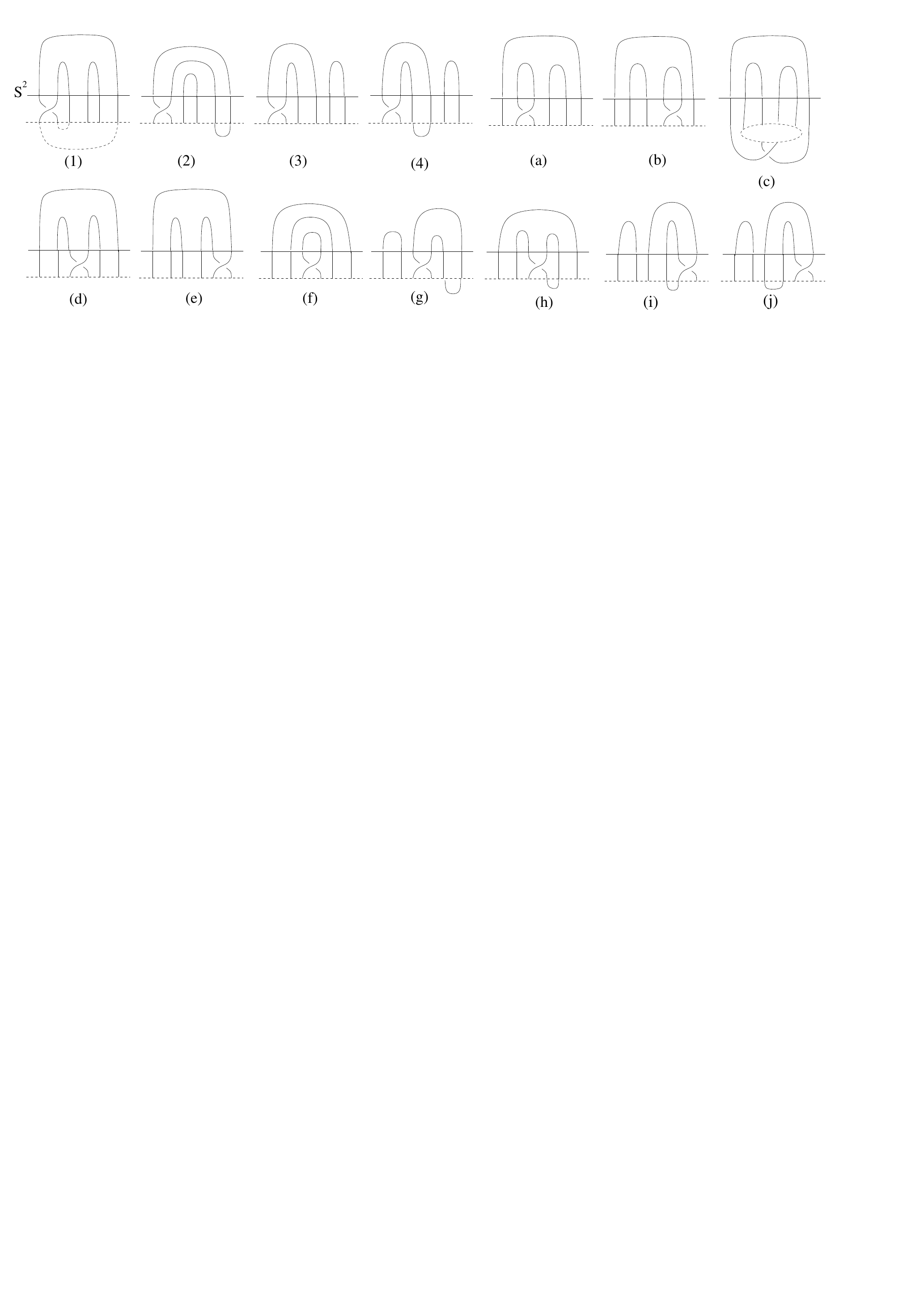}
\vskip -520pt
\caption{}
\label{p12}
\end{figure}

Then $0_1$-closure of $T$ makes a reduced alternating link diagram if the diagram $(1)$ of Figure~\ref{p12} is not the diagram of $T$ since $|T|\geq 2$ and $T$ is a reduced alternating tangle diagram.\\

First, assume that the diagram $(1)$ is not the diagram of $T$.\\

We note that  $0_1$-closure of $T'$ also needs to be reduced alternating link diagram if the diagram $(1)$ of Figure~\ref{p12} is not the diagram of $T$. Otherwise, $X_{0_i(\overline{T})}\neq (-a^{-3})^kX_{0_i(\overline{T'})}$ for any $k$ and it makes a contradiction.\\

We note that the diagrams $(a), (b)$ and $(c)$ are not reduced alternating. So, the closest crossing to $S^2$ of $T'$  is by either $\sigma_3^{\pm 1}$ or $\sigma_5^{\pm 1}$. (See $(d)$ and $(e)$.) \\

Suppose that the closest crossing to $S^2$ of $T'$ is by $\sigma_3^{\pm 1}$  as the diagram $(d)$ of Figure~\ref{p12}.\\

 Then we consider $0_4$-closure of $T$ as the diagram $(2)$ of Figure~\ref{p12}. If the diagram is a reduced alternating link diagram, then we have a contradiction because $0_4(\overline{T'})$ is not reduced alternating (See $(f)$). So, $T$ does not have any crossing with the rightmost string of $T$ as the diagram $(2)$.
Now, consider $0_5$-closure of $T$. Then it is clear that $0_5(\overline{T})$ is not reduced alternating link diagram. So, we note that $0_5(\overline{T'})$ is also not reduced alternating link diagram. Otherwise, $X_{0_i(\overline{T})}\neq (-a^{-3})^kX_{0_i(\overline{T'})}$ for any $k$ since $|T|=|T'|$ and it makes a contradiction. So, we have a diagram $(g)$ or $(h)$ for $T'$. Otherwise, $0_5(\overline{T'})$ is a reduced alternating link diagram, but $0_5(\overline{T})$ is not. We may consider the case that  $\sigma_5^{\pm 1}$ also can be a closest crossing to $S^2$ since we did not cover the case. However, in this case, we note that $0_5(\overline{T'})$ is reduced alternating. This makes a contradiction too.
If $(g)$ is the diagram of $T'$ then both $T$ and $T'$ have the rightmost string with no crossing. Also, the other two strings of each make a $4$-plat presentation since they make a rational $2$-tangle diagram. 
By Corollary~\ref{T6}, we note that $T=T'$ if $v_T=(-a^{-3})^kv_{T'}$ for some $k$.
If $(h)$ is the diagram of $T'$ then consider $0_1$-closure of $T'$. Then we see that $0_1(\overline{T'})$ is not reduced alternating link diagram, but $0_1(\overline{T'})$ is reduced alternating. This makes a contradiction.\\

Suppose that the closest crossing to $S^2$ of $T'$ is by $\sigma_5^{\pm 1}$ as the diagram $(e)$ of Figure~\ref{p12}.\\

 Then, we note that $0_2(\overline{T})$ is not reduced alternating since $0_2(\overline{T'})$ is not reduce alternating. So,  the second string of $T$ cannot have any crossing as the diagram $(4)$ of Figure~\ref{p12}.
 We note that  $0_5(\overline{T'})$ is not reduced alternating since $0_5(\overline{T})$ is not.  So, we have $0_5(\overline{T'})$ as the diagrams $(i)$ and $(j)$ of Figure~\ref{p12}.
 If we have the diagram $(i)$ then we note that $0_1(\overline{T'})$ is not reduced alternating. This makes a contradiction since $0_1(\overline{T})$ is reduced alternating. If we have the diagram $(j)$ then the second string of $T'$ also does not have any crossing. By Corollary~\ref{T6}, we know that  $T=T'$ if $v_T=(-a^{-3})^kv_{T'}$ for some $k$.
 \\
\begin{figure}[htb]
\includegraphics[scale=.6]{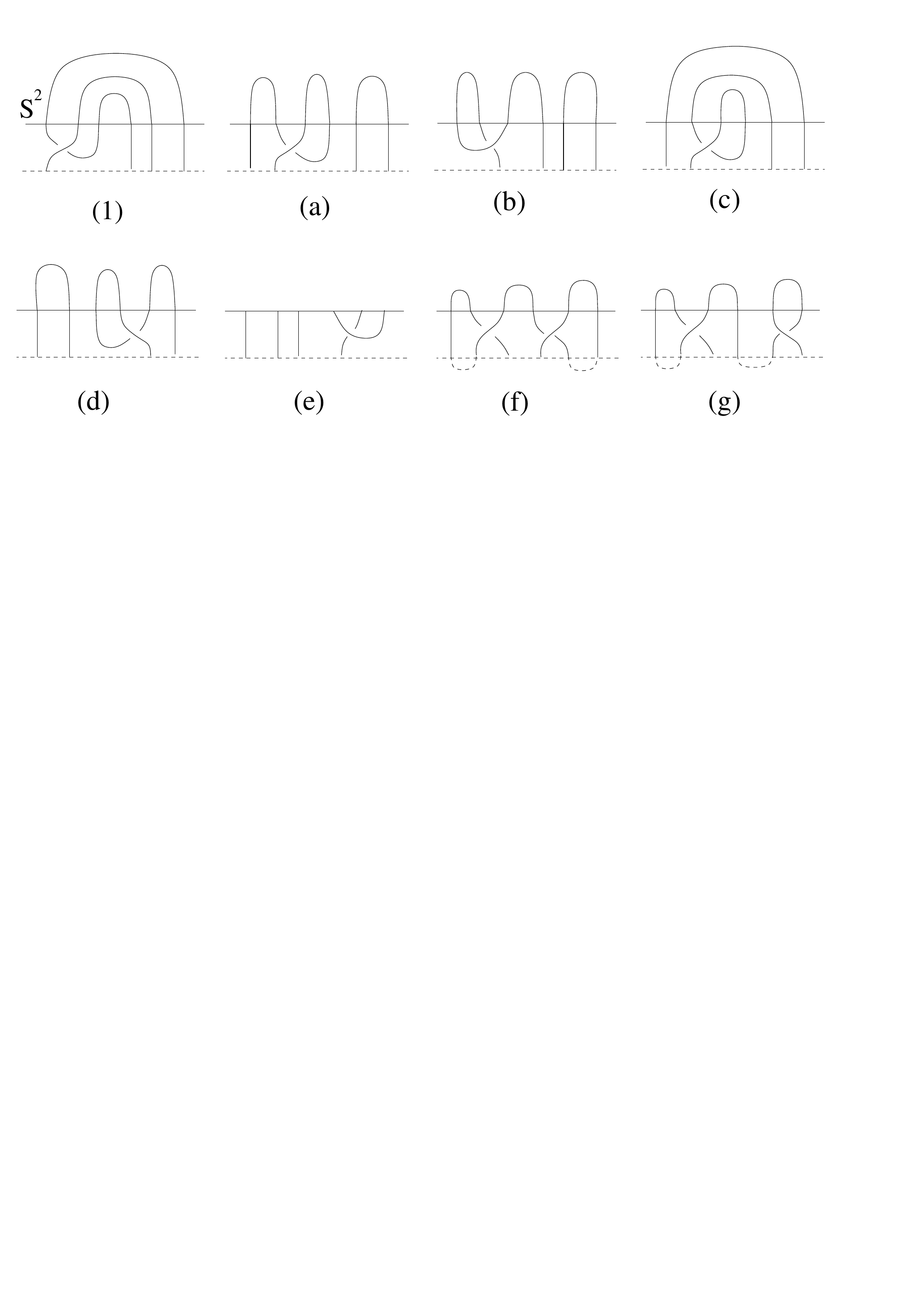}
\vskip -340pt
\caption{}
\label{p24}
\end{figure}

Now, assume that the diagram $(1)$ of Figure~\ref{p12} is the diagram of $T$. It is enough to consider the case that the smaller dotted arc is a real arc in the diagram $(1)$ since the other case is a symmetry case of this.\\

Then we note that $0_4$-closure of $T$ is now reduced alternating as the diagram $(1)$ in Figure~\ref{p24} since $|T|\geq 2$ and $T$ is reduced alternating. This implies that the closest crossing to $S^2$ of $T'$ is either by $\sigma_2^{\pm 1},\sigma_4^{\pm 1},\sigma_5^{\pm 1}$ or $\sigma_6^{\pm 1}$ since $0_4(\overline{T'})$ also should be reduced alternating. \\

First, assume that a crossing of $T'$ by $\sigma_2^{\pm 1}$ is the only one closest crossing to $S^2$.\\

Since $0_3(\overline{T})$ is not reduced alternating, we note that $0_3(\overline{T})$ is also not reduced alternating. So, the diagram of $T'$ is either $(a)$ of $(b)$ of Figure~\ref{p24}.
\\

If $(a)$ is the diagram of $T'$ then $0_4(\overline{T'})$ should be reduced alternating since $0_4(\overline{T})$ does. However, it is not reduced alternating.\\

$(b)$ is also not the diagram of $T'$ since the crossing by $\sigma_2^{\pm 1}$ is also a crossing by $\sigma_1^{\mp 1}$ in this case.\\

Now, assume that a crossing of $T'$ by $\sigma_4^{\pm 1}$ is the only one closest crossing to $S^2$.\\

Since $0_3(\overline{T})$ is not reduced alternating, $0_3(\overline{T'})$ should be not reduced alternating. Therefore, the diagram of $T'$ is either $(d)$ or $(e)$ of Figure~\ref{p24}.\\

If $(d)$ is the diagram of $T'$ then $0_4(\overline{T'})$ should be reduced alternating since $0_4(\overline{T})$ does. However, it is not reduced alternating.\\

By Lemma~\ref{T21},  $(e)$ is also not the diagram of $T'$.\\

The cases that a crossing of $T'$ by $\sigma_5^{\pm 1}$ or $\sigma_6^{\pm 1}$ is the only one closest crossing to $S^2$ are analogous to the previous two cases. \\

Now, assume that  crossings of $T'$ by $\sigma_2^{\pm 1}$ and $\sigma_4^{\pm 1}$ are the  closest crossings to $S^2$.\\

Then, take $0_3$-closure of $T'$ as the diagram $(f)$ of Figure~\ref{p24}. Since $0_3(\overline{T})$ is not reduced alternating, $0_3(\overline{T'})$ is also not reduced alternating. This implies that at least one of the dotted arcs  in the diagram $(f)$ of Figure~\ref{p24} is a real arc. (If a crossing by $\sigma_6^{\pm 1}$ is also a closest crossing to $S^2$ then clearly, $0_3(\overline{T'})$ is reduced alternating.) Both cases can be eliminated by a similar argument as in the previous cases.\\

Assume that  crossings of $T'$ by $\sigma_2^{\pm 1}$ and $\sigma_5^{\pm 1}$ are the  closest crossings to $S^2$.\\

Then, take $0_2$-closure of $T'$ as the diagram $(g)$ of Figure~\ref{p24}.  Since $0_5(\overline{T})$ is not reduced alternating, $0_5(\overline{T'})$ is also not reduced alternating. This implies that at least one of the dotted arcs  in the diagram $(g)$ of Figure~\ref{p24} is a real arc. Both cases can be eliminated by the same argument as in the previous case.\\

The cases having closest crossings by $\sigma_2^{\pm 1}$ and $\sigma_6^{\pm 1}$ or $\sigma_6^{\pm 1}$ and $\sigma_6^{\pm 1}$ are analogous to one of the previous cases.\\

Therefore, there is no counterexample when the diagram of $T$ is the diagram $(1)$ of Figure~\ref{p12}.\\
\begin{figure}[htb]
\includegraphics[scale=.55]{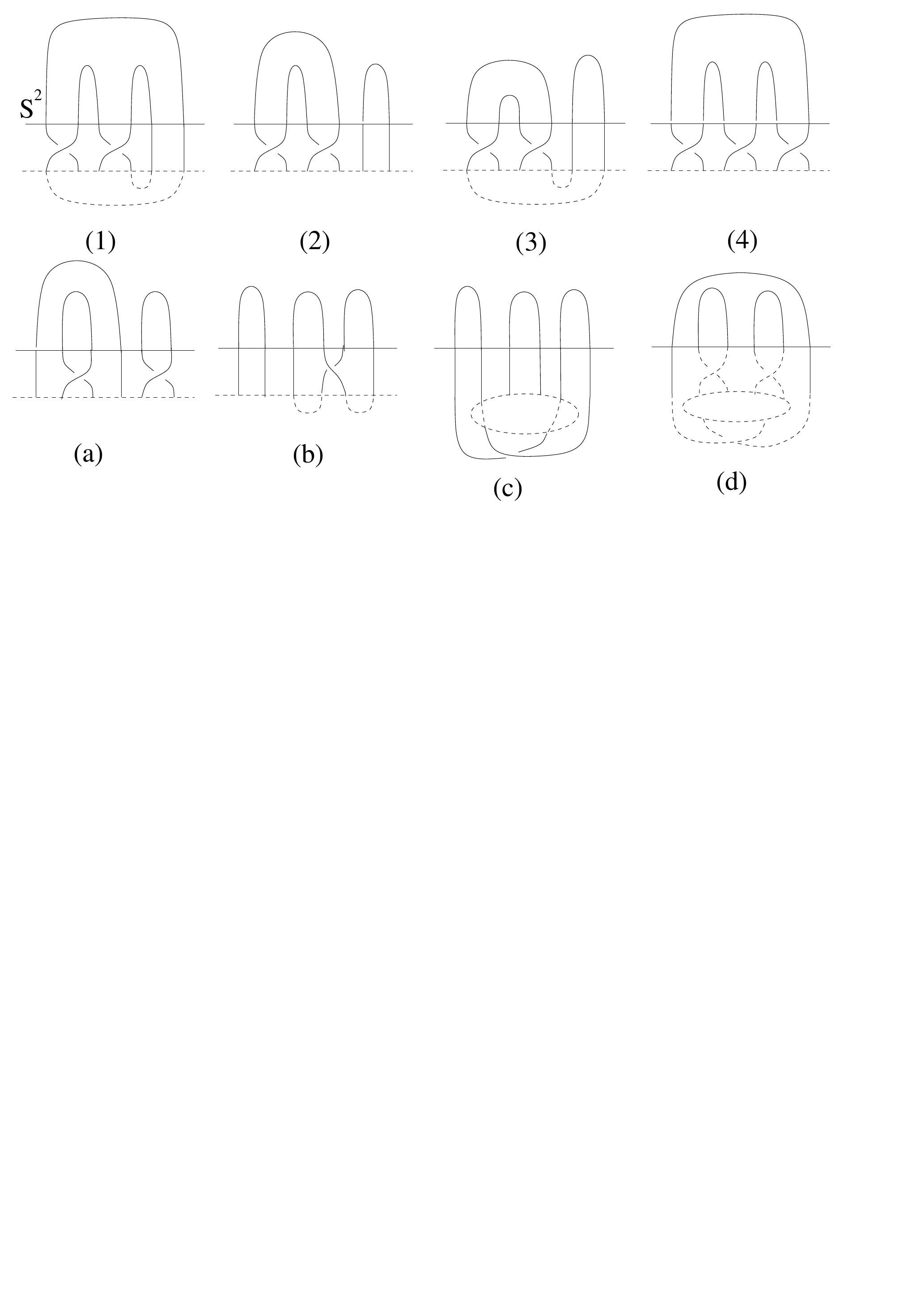}
\vskip -280pt
\caption{}
\label{p19}
\end{figure}

Case 2: Now, assume that two crossings which are obtained by $\sigma_1^{\pm 1}$ and $\sigma_3^{\pm 1}$ are closest crossings to $S^2$ and no other crossing can be a closest crossing to $S^2$.\\

Then $0_2$-closure of $T$ are reduced alternating link diagrams  as the diagrams $(2)$
of Figure~\ref{p19} since $T$ is reduced alternating.\\

We note that $T'$ cannot have a closest crossing by  $\sigma_1^{\pm 1}$ or $\sigma_3^{\pm 1}$.
Otherwise, it violates the rule that $|T|=|T'|$ is minimal. So, a closest crossing of $T'$ is by either $\sigma_2^{\pm 1}, \sigma_4^{\pm 1}, \sigma_5^{\pm 1}$ or $\sigma_6^{\pm 1}$.\\

If we have a closest crossing by $\sigma_2^{\pm 1}$ or $\sigma_5^{\pm 1}$, then we see that $0_2(\overline{T'})$ is not reduced alternating as the diagram $(a)$ of Figure~\ref{p19}. This makes a contradiction. \\

Now consider the case that a crossing by $\sigma_4^{\pm 1}$ or $\sigma_6^{\pm 1}$ is only two possible closest crossing to $S^2$ in $T'$. Then we also note that $0_1(\overline{T'})$ is not reduced alternating. (Refer to $(b)$ of Figure~\ref{p19}.) So, $0_1(\overline{T})$ is also not reduced alternating. This implies that the diagram $(1)$ of Figure~\ref{p19} is the diagram of $T$. i.e., at least one of the dotted arcs is a real arc.  Then $0_2(\overline{T})$ is reduced alternating as the diagram $(3)$ of Figure~\ref{p19}.\\

Assume that the crossing by $\sigma_4^{\pm 1}$ is the only one closest crossing to $S^2$ in $T'$. Since $0_3(\overline{T})$ is not reduced alternating, $0_3(\overline{T'})$ should be not reduced alternating. So, at least one of the dotted arcs of the diagram $(b)$ is a real arc. However, the right dotted arc cannot be real since $0_2(\overline{T'})$ should be reduced alternating. Also, the self crossing by the left dotted arc  impossible since the crossing by $\sigma_4^{\pm}$ is not also a crossing by $\sigma_3^{\mp 1}$ and it make a contradiction for $|T|=|T'|$ is minimal. \\

Now, assume that the crossing by $\sigma_6^{\pm 1}$ is the only one closest crossing to $S^2$ in $T'$. Then we can eliminate this case by a similar argument as in the previous case. (Refer to the diagram $(c)$ of Figure~\ref{p19}.)\\

Suppose that both $\sigma_4^{\pm 1}$ and $\sigma_6^{\pm 1}$ make a closest crossing to $S^2$ for $T'$. Then we can take $0_3$-closure of $T'$ and we conclude that $0_3(\overline{T'})$ should be reduced alternating by a similar argument as in the previous cases. However, $0_3(\overline{T})$ is not reduced alternating. Therefore, the second case also cannot be realized.\\

Case 3: Assume that three crossings which are obtained by $\sigma_1^{\pm 1},~\sigma_3^{\pm 1}$ and $\sigma_5^{\pm 1}$ are closest crossings to $S^2$ as the diagram $(4)$ in Figure~\ref{p19}.\\

 We note that a closest crossing to $S^2$ of $T'$ is by either $\sigma_2^{\pm 1}, \sigma_4^{\pm 1}$ or $\sigma_6^{\pm 1}$ since $|T|=|T'|$ is minimal.\\

We note that $0_1$-closure of $T$ is reduced alternating link diagram. However,  $0_1(\overline{T'})$ is not a reduced alternating as the diagram $(d)$ of Figure~\ref{p19}. So, this case is also impossible.\\

\begin{figure}[htb]
\includegraphics[scale=.65]{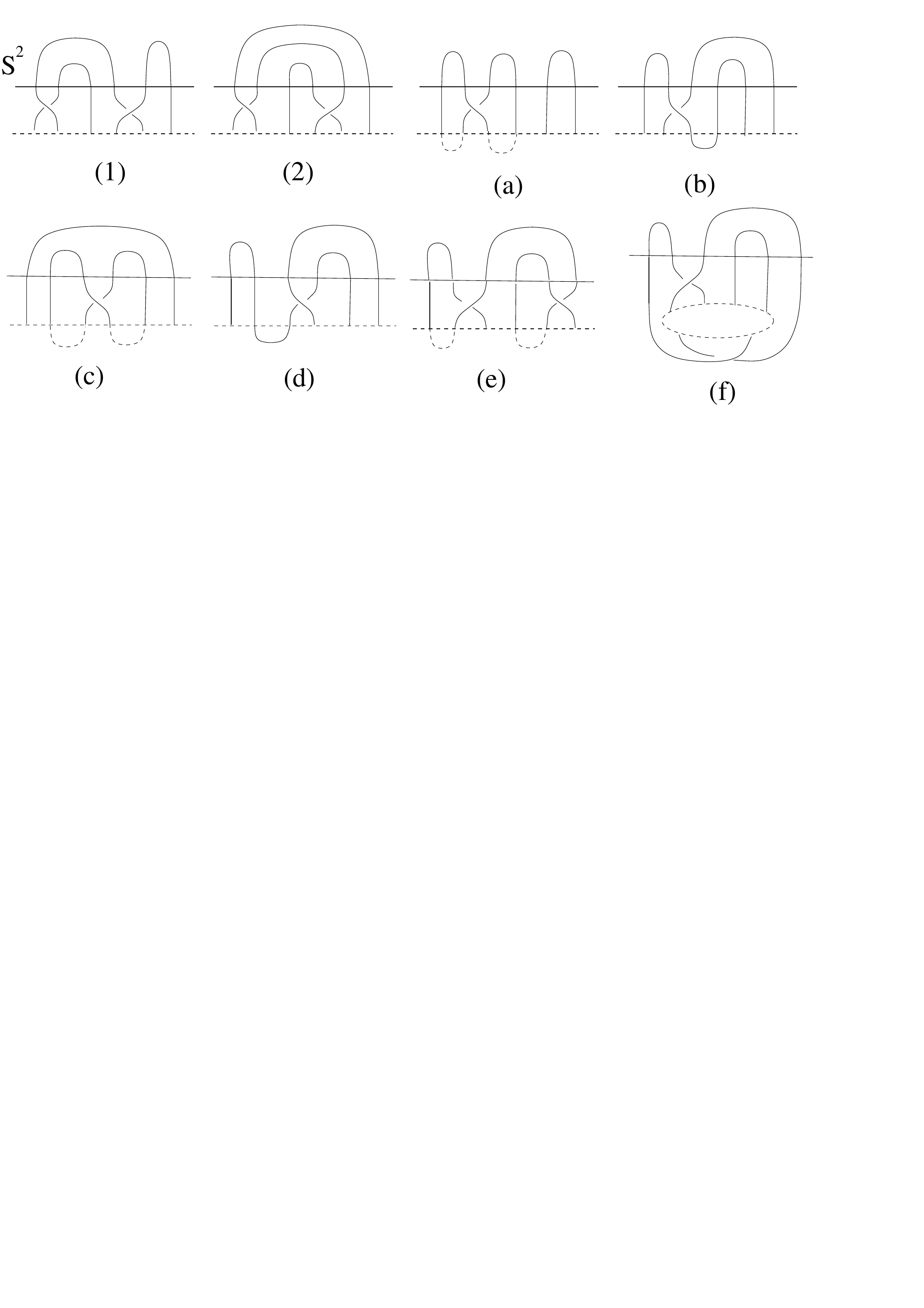}
\vskip -380pt
\caption{}
\label{p25}
\end{figure}
Case 4:  Assume that the two crossings which are obtained by $\sigma_1^{\pm 1}$ and $\sigma_4^{\mp 1}$  are closest crossings to $S^2$ as the diagrams $(1)$ and $(2)$ in Figure~\ref{p25}.\\

 We note that at least one of the diagrams $(1)$ and $(2)$ is reduced alternating by Lemma~\ref{T8}.
Also, we note that a closest crossing to $S^2$ of $T'$ is by either $\sigma_2^{\mp 1}, \sigma_3^{\pm 1}, \sigma_5^{\pm 1}$ or $\sigma_6^{\mp 1}$.\\

First, assume that the crossing by $\sigma_2^{\mp 1}$ is the only one closest crossing to $S^2$ of $T'$.\\

 Then take $0_3$-closure of $T'$ as the diagram $(a)$ of Figure~\ref{p25}. We note that one of the dotted arc should be real since $0_3(\overline{T'})$ is not reduced alternating  ($0_3(\overline{T})$ is not reduced alternating), $T'$ is reduced alternating and $|T'|\geq 2$. It is impossible to have the left dotted arc since we also can get the  crossing (by $\sigma_2^{\mp 1}$) from $\sigma_1^{\pm 1}$ in this case. It contradicts the assumption that $|T|=|T'|$ is minimal. Now, assume that the right dotted arc only can be realized.
 Then we note that $0_5(\overline{T'})$ is reduced alternating but $0_5(\overline{T})$ is not.
 This makes another contradiction.\\

Second, assume that the crossing by $\sigma_3^{\pm 1}$ is the only one closest crossing to $S^2$ of $T'$.  Consider the diagram $(c)$ and $(d)$. Then, we can make a contradiction by using a similar argument as in the previous case.\\

Now, assume that the crossings by $\sigma_2^{\pm 1}$ and $\sigma_5^{\pm 1}$ is the only two closest crossing to $S^2$. We note that $0_5(\overline{T'})$ is not reduced alternating since $0_5(\overline{T})$ is not reduced alternating. So, at least one of the dotted arcs in the diagram $(e)$ should be real arc. However, any of the arcs cannot be realized since the crossings by $\sigma_2^{\pm 1}$ or $\sigma_5^{\pm 1}$  also can be obtained from either $\sigma_1^{\pm 1}$ or $\sigma_4^{\mp 1}$ in these two cases. \\

Assume that the crossings by $\sigma_2^{\pm 1}$ and $\sigma_6^{\pm 1}$ is the only two closest crossing to $S^2$ of $T'$. Then $0_5(\overline{T'})$ is reduced alternating as the diagram $(f)$ of Figure~\ref{p25}. However, $0_5(\overline{T'})$ is not reduced alternating. This makes a contradiction.\\

We note that the rest of cases also can be eliminated by a similar argument in the previous cases since we can modified each of the remaining cases into a case I already covered.\\

Therefore, it is impossible to have a closest crossing by $\sigma_1^{\pm 1}$ to satisfy the assumption that $v_T=(-a^{-3})^kv_{T'}$ for some $k$ but $\mathbb{T}\not\approx \mathbb{T}'$.\\

The cases having a closest crossing to $S^2$ of $T$ by $\sigma_j^{\pm 1}$ for $2\leq j\leq 6$ are also eliminated by a similar argument as in the previous cases by considering  $n\times 60^\circ$ counterclockwise rotations ($1\leq n\leq 5$). (Refer to Figure~\ref{p10}.)\\

This completes the proof.

\end{proof}

\begin{Con}\label{T14} Suppose that $T$ and $T'$ are two rational $3$-tangles.
Then $T\approx T'$ if and only if $v_T=(-a^{-3})^kv_{T'}$ for some integer $k$.
\end{Con}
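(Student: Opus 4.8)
The plan is to prove the ``only if'' direction, which is already Theorem~\ref{T7}, and to establish the converse by bootstrapping from the reduced alternating case (Theorem~\ref{T13}) in exactly the way Corollary~\ref{T6} is deduced from Theorem~\ref{T5}. So the entire content is: if $v_T=(-a^{-3})^kv_{T'}$ for some $k$, then $\mathbb{T}\approx\mathbb{T}'$, and the strategy has two stages — a transfer-matrix reduction to a ``base case'', and the resolution of the base case, where the non-alternating phenomena live.

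\textbf{Stage 1: transfer matrices and reduction to the base case.} Since every rational $3$-tangle is a $6$-plat tangle, write $\mathbb{T}=q_6(w)_i$ with $w=\sigma_{k_1}^{\epsilon_1}\cdots\sigma_{k_n}^{\epsilon_n}$, $k_j\in\{1,\dots,6\}$. I would first produce $5\times 5$ matrices $B_j^{\pm 1}$ over $\Lambda$, one for each generator $\sigma_j$ (including $\sigma_6$), such that resolving $q_6(w)$ from the top via the three Kauffman axioms realizes $v_{q_6(w)}=B_{k_1}^{\epsilon_1}\cdots B_{k_n}^{\epsilon_n}\,e$, where $e$ is the coordinate vector of the trivial tangle $0_i$; this is the $3$-tangle analogue of Theorem~\ref{T2}. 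One records that $\det B_j^{\pm 1}=\pm a^{\pm m_j}$ is a unit, so the only scalar ambiguity produced by a matrix product is the monomial $(-a^{-3})^k$ coming from writhe. Stacking $3$-tangles along the braid then corresponds to matrix multiplication, and the ``$w^{-1}v$'' device of Corollary~\ref{T6} applies essentially verbatim: given $\mathbb{T},\mathbb{T}'$ with $v_T=(-a^{-3})^kv_{T'}$, the composite $\overline{\mathbb{T}}\cdot\mathbb{T}'$ has a trivial-type bracket vector, and $\mathbb{T}\approx\mathbb{T}'$ if and only if $\overline{\mathbb{T}}\cdot\mathbb{T}'\approx 0_i$. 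Thus Conjecture~\ref{T14} is equivalent to the \emph{base case}: if $v_T=(-a^{-3})^k e_i$ for a standard trivial-tangle vector $e_i$, then $\mathbb{T}\approx 0_i$.

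\textbf{Stage 2: the base case.} Suppose $v_T=(-a^{-3})^k e_i$ but $\mathbb{T}\not\approx 0_i$, and choose a diagram $D$ of $\mathbb{T}$ with $|D|$ minimal. If $\mathbb{T}$ happens to admit a reduced alternating diagram, Theorem~\ref{T13} together with Lemmas~\ref{T8} and~\ref{T11} already yields the contradiction, so the crux is a rational $3$-tangle admitting no reduced alternating diagram. For such a $D$ the sharp Murasugi span identity (Theorem~\ref{T4}) is unavailable, and the span argument underlying Theorems~\ref{T5} and~\ref{T13} must be replaced. The substitute I would try is to show that a minimal diagram of a rational $3$-tangle is always adequate (or at least plus- or minus-adequate), so that for a suitable closure $0_j(\overline D)$ the Kauffman--Murasugi--Thistlethwaite span bound for adequate diagrams gives $M(X_{0_j(\overline D)})-m(X_{0_j(\overline D)})\ge 4|D|-C$ with a defect $C$ controlled by the number of nugatory-type closure arcs; the closure bookkeeping of Lemma~\ref{T8} would then pin down $j$ and reproduce the same numerical contradiction with $v_T$ being a trivial vector as in Theorem~\ref{T13}. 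An alternative, more $3$-manifold-theoretic route is to parametrize rational $3$-tangles by their double branched covers (genus-two handlebodies with a marked system of compressing disks) and to verify that the bracket vector determines that parametrization, trading the diagrammatic difficulty for a Heegaard-splitting one.

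\textbf{The main obstacle.} I expect Stage~1 and the closure organization of Stage~2 to be fairly routine upgrades of constructions already present in the paper. The genuine difficulty is the control of the Kauffman span for \emph{non-alternating} rational $3$-tangle diagrams in Stage~2: one needs either a structural classification of minimal non-alternating rational $3$-tangle diagrams — an analogue of the case analysis in Lemma~\ref{T8} and Theorem~\ref{T13}, but without alternation to lean on — or an independent lower bound on $M(X)-m(X)$ for their closures. Making either precise is, to my mind, exactly the gap separating Theorem~\ref{T13} from Conjecture~\ref{T14}.
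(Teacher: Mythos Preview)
The statement you are attempting to prove is labeled \textbf{Conjecture}~\ref{T14} in the paper, and the paper does \emph{not} supply a proof of it. What the paper establishes is the forward implication (Theorem~\ref{T7}) and the converse restricted to reduced alternating diagrams (Theorem~\ref{T13}); the full converse is left open. So there is no ``paper's own proof'' to compare against, and your proposal should be read as an outline of an attack on an open problem rather than as a reconstruction of an existing argument.

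On the substance of your outline: Stage~1 is essentially correct and is the natural $3$-tangle analogue of the passage from Theorem~\ref{T5} to Corollary~\ref{T6}. The $5\times 5$ transfer matrices you describe are precisely those of~\cite{9}, they are invertible over $\Lambda$, and the $w^{-1}v$ trick reduces the conjecture to the base case ``$v_T=(-a^{-3})^k e_i\Rightarrow \mathbb{T}\approx 0_i$'' just as you say. You are also right that this reduction is routine; the paper itself could have stated it.

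The genuine gap is exactly where you locate it, in Stage~2, and you are candid that you do not close it. Two cautions about the specific strategies you float. First, the adequacy route is more fragile than it looks: there is no known result guaranteeing that a minimal diagram of a rational $3$-tangle (or of its closures) is adequate, and the very paper cited as~\cite{4} produces infinite families of non-trivial links with trivial Jones polynomial, so span-type lower bounds on $M(X)-m(X)$ cannot by themselves separate a non-trivial closure from a trivial one without some additional structural input specific to rational $3$-tangles. You would need to prove that none of those phantom-Jones links arises as a $0_j$-closure of a rational $3$-tangle, which is already a substantial and currently unestablished claim. Second, the double-branched-cover route replaces one hard problem with another: parametrizing rational $3$-tangles by disk systems in a genus-two handlebody is exactly the content of~\cite{12}, but showing that the bracket vector \emph{determines} that disk system is tantamount to proving the conjecture, so this reformulation does not obviously reduce the difficulty.

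In short: your reduction is sound, your identification of the obstacle is accurate, and the paper agrees with you that this obstacle is unresolved --- hence the label ``Conjecture.''
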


\begin{figure}[htb]
\includegraphics[scale=.34]{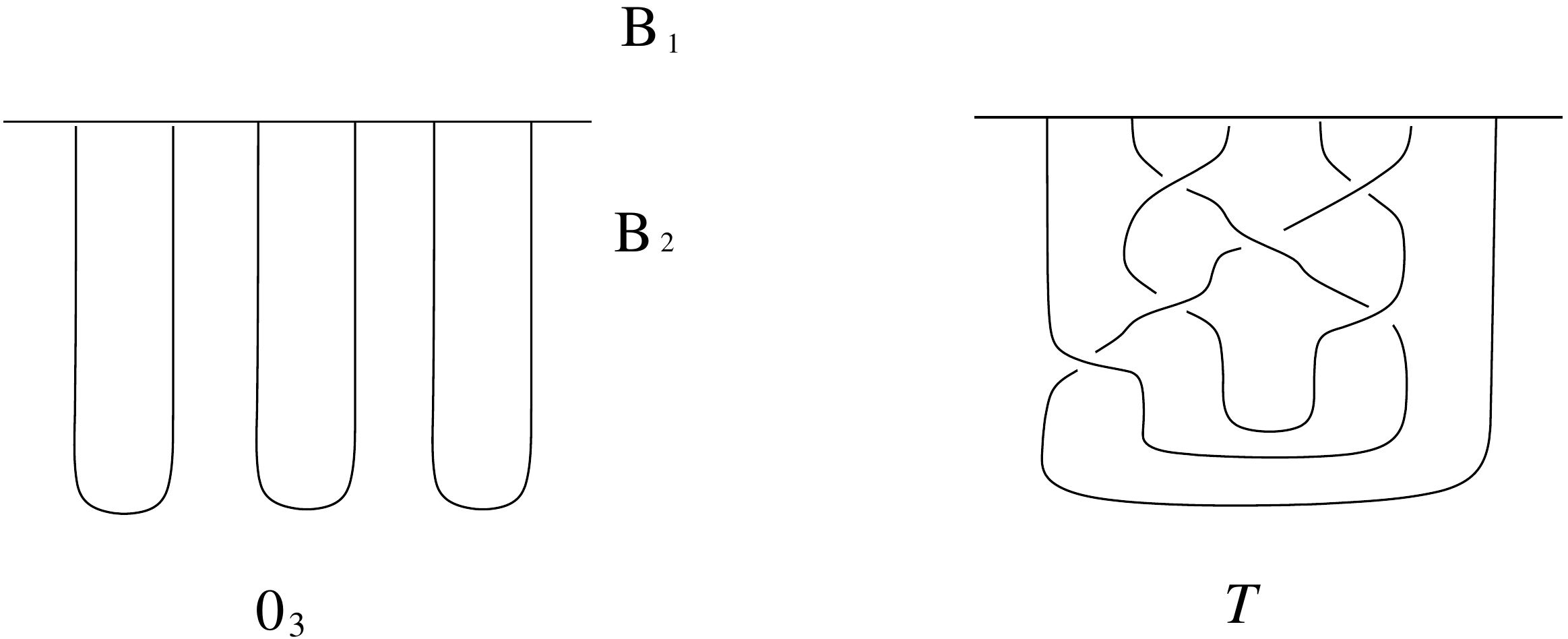}
\caption{}
\label{p16}
\end{figure}
$\bf{Example}:$ Figure~\ref{p16} gives us  an example of two rational alternating $3$-tangles that are distinguished by the invariant. We note that $0_3(\overline{T})$ is the Borromean rings. So, if you take any two of the three strings in $T$ then we get a trivial rational 2-tangle.\\

First of all, we see that $v_{0_3}=(0,0,1,0,0)$.\\

I found a method to calculate the Kauffman bracket of 6-plat presentations of links by using a presentation of braid group $\mathbb{B}_6$ into a group of $5\times 5$ matrices. (Refer to~\cite{9}.)\\

By the method, we have $v_{T}=(-a^{-6}+3a^{-2}-3a^2+a^6,-2a^4+a^8+1,-2a^6+a^{10},-a^4,-2a^4+a^8+1)$.\\

Therefore, $v_{0_3}\neq (-a^{-3})^kv_T$ for any $k$.\\

This implies that $0_3\not\approx T$.

\end{document}